\newtheorem{conjecture}{Conjecture}
\newtheorem{question}[conjecture]{Question}
\newtheorem{theorem}{Theorem}[section]
\newtheorem{lemma}[theorem]{Lemma}
\newtheorem{proposition}[theorem]{Proposition}
\newtheorem{corollary}[theorem]{Corollary}
\theoremstyle{definition}
\newtheorem{remarkx}[theorem]{Remark}
\newenvironment{claim}
  {\pushQED{\qed}\claimx}
  {\popQED\endclaimx}
\def\NN{\mathbb{N}}
\def\CC{\mathbb{C}}
\def\C{\mathcal{C}}
\def\D{\mathcal{D}}
\def\N{\mathcal{N}}
\def\MM{\mathcal{M}}
\def\RR{\mathbb{R}}
\def\TT{\mathbb{T}}
\def\ZZ{\mathbb{Z}}
\def\e{\mathbbm{1}}
\def\ee{\mathbbm{1}_{\Omega_t}}
\def\l{\ell}
\def\M{\mathscr{M}}
\newcommand{\al}{\alpha}
\newcommand{\be}{\beta}
\newcommand{\ch}{\mathcal H}
\newcommand{\cn}{\mathcal N}
\newcommand{\wt}{\widetilde}
\newcommand{\wh}{\widehat}
\newcommand{\ZR}{\mathbb{R}}
\newcommand{\ZT}{\mathbb{T}}
\newcommand{\ZC}{\mathbb{C}}
\newcommand{\ZN}{\mathbb{N}}
\newcommand{\ti}{\tilde}
\title{The largest $(k,\ell)$-sum-free subsets}
\author{Yifan Jing}
\address{Department of Mathematics, University of Illinois at Urbana-Champaign, Urbana IL, USA}
\email{yifanjing17@gmail.com}
\author{Shukun Wu}
\address{Department of Mathematics, University of Illinois at Urbana-Champaign, Urbana IL, USA}
\email{shukunwu2@illinois.edu}
\subjclass[2010]{Primary 11B30; Secondary 11K70, 05D10}
\date{}
\begin{document}

\dedicatory{Dedicated to the memory of Jean Bourgain}

\begin{abstract}
Let $\mathscr{M}_{(2,1)}(N)$ be the infimum of the largest sum-free subset of any set of $N$ positive integers. An old conjecture in additive combinatorics asserts that there is a constant $c=c(2,1)$ and a function $\omega(N)\to\infty$ as $N\to\infty$, such that $cN+\omega(N)<\mathscr{M}_{(2,1)}(N)<(c+o(1))N$. The constant $c(2,1)$ is determined by Eberhard, Green, and Manners, while the existence of $\omega(N)$ is still wide open.

In this paper, we study the analogous conjecture on $(k,\l)$-sum-free sets and restricted $(k,\l)$-sum-free sets. We determine the constant $c(k,\l)$ for every $(k,\l)$-sum-free sets, and confirm the conjecture for infinitely many $(k,\l)$. 
\end{abstract}

\maketitle

\section{Introduction}
In 1965, Erd\H os asked the following question \cite{Erdos65}. Given an arbitrary sequence $A$ of $N$ different positive integers, what is the size of the largest sum-free subsequence of $A$? By \emph{sum-free} we mean that if $x,y,z\in A$, then $x+y\neq z$. Let
\[
\M_{(2,1)}(N)=\inf_{\substack{A\subseteq\NN^{>0}\\|A|=N}}\max
_{\substack{S\subseteq A\\S\text{ is sum-free}}}|S|.
\]
Using a beautiful probabilistic argument, Erd\H os showed that every $N$-element set $A\subseteq \NN^{>0}$ contains a sum-free subset of size at least $N/3$, in other words, $\M_{(2,1)}(N)\geq N/3$. It turns out that it is surprisingly hard to improve upon this bound. The result was later improved by Alon and Kleitman \cite{AK90}, who showed that $\M_{(2,1)}(N)\geq (N+1)/3$. Bourgain \cite{Bourgain97}, using an entirely different Fourier analytic argument, showed that $\M_{(2,1)}(N)\geq(N+2)/3$, which is the best lower bound on $\M_{(2,1)}(N)$ to date. In particular, the following conjecture has been made in a series of papers. See \cite{Erdos65, Bourgain97, EGM, TV17} for example.
\begin{conjecture}\label{conj:main}
There is a function $\omega(N)\to\infty$ as $N\to\infty$, such that $$\M_{(2,1)}(N)> \frac{N}{3}+\omega(N).$$
\end{conjecture}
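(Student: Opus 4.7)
The plan is to combine Bourgain's Fourier-analytic framework (which yields the current record $(N+2)/3$) with a structural dichotomy on $A$.

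\textbf{Setup.} For each $\theta \in \RR/\ZZ$, the rotated set $S_\theta = \{a \in A : \{\theta a\} \in (1/3, 2/3)\}$ is sum-free, since $(1/3,2/3)$ is a sum-free interval in $\RR/\ZZ$. Expanding the indicator of the middle third in its Fourier series yields
\[
|S_\theta| = \frac{N}{3} + \sum_{k \neq 0} c_k \, \widehat{\mathbbm{1}_A}(k\theta), \qquad c_k := \int_{1/3}^{2/3} e^{-2\pi i k x}\,dx.
\]
Integrating over $\theta$ returns the $N/3$ average, so the task is to locate a $\theta$ at which the correction term exceeds $\omega(N)$.

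\textbf{Dichotomy.} I would split the argument by the size of $\|\widehat{\mathbbm{1}_A}\|_{L^4(\RR/\ZZ)}^4$, which equals the number of additive quadruples in $A$. If this is large --- say at least $N^3/\omega(N)$ --- then by Balog--Szemer\'edi--Gowers and Freiman's theorem, a positive-density subset $A' \subseteq A$ sits inside a generalized arithmetic progression of bounded rank. On such a progression, Schur-type arguments construct a sum-free subset of density strictly above $1/3$, beating $N/3$ by an amount of order $|A'| \gg \omega(N)$.

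\textbf{Smooth regime.} If the $L^4$ norm is small, then $\widehat{\mathbbm{1}_A}$ is spread out. I would mollify the middle-third indicator and restrict $\theta$ to a Bohr neighbourhood adapted to the large Fourier coefficients of $\mathbbm{1}_A$. A careful second-moment calculation for $|S_\theta| - N/3$ over this restricted region, using the $L^4$ bound as $L^\infty$-type control on the tail, should produce a $\theta$ at which the correction diverges with $N$.

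\textbf{Main obstacle.} The central difficulty --- and the reason the conjecture has resisted since 1965 --- is calibrating the two regimes so that their thresholds meet. Both the Alon--Kleitman and Bourgain improvements are only $O(1)$, and pushing the correction term to a divergent $\omega(N)$ seems to require either a genuinely new averaging identity beyond Bourgain's, or a sharp inverse theorem characterising the extremal sets $A$ for which the middle-third trick cannot exceed $N/3 + O(1)$. It is plausible that integrality of $A \subseteq \NN^{>0}$ must enter nontrivially, beyond its role in the existing arguments.
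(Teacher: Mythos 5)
The statement you are addressing is Conjecture~\ref{conj:main}, which the paper explicitly records as \emph{open}: the authors prove the analogous lower bound with a divergent error term only for $(k,\l)$ with $k=5\l$ (and, via part (iii), $k=3\l$), and for $(2,1)$ the best known bound remains Bourgain's $(N+2)/3$. Your text is a research plan rather than a proof, and you say as much in your final paragraph; as it stands it does not establish the conjecture, and both branches of your dichotomy have concrete gaps. In the large-energy branch, Balog--Szemer\'edi--Gowers plus Freiman gives a positive-density subset $A'$ of $A$ lying in a bounded-rank generalized arithmetic progression, but (a) a sum-free subset of $A'$ cannot in general be enlarged by anything from $A\setminus A'$, so a gain of order $|A'|$ \emph{inside} $A'$ does not translate into a gain over $N/3$ for $A$; and (b) the claim that one can always find a sum-free subset of density strictly above $1/3$ in such structured sets contradicts the Eberhard--Green--Manners theorem quoted in the introduction, which exhibits sets (highly structured ones, in fact) in which every subset of density $1/3+\varepsilon$ contains a Schur triple. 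In the smooth branch, the correction term $\sum_{k\neq0}c_k\widehat{\mathbbm{1}_A}(k\theta)$ has mean zero in $\theta$, so a second-moment computation only controls its typical absolute size; to extract a \emph{positive} deviation one needs an $L^1$ lower bound of Littlewood type, which is exactly Bourgain's route and is what stalls at $O(1)$.

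It is worth pointing out where the paper locates the obstruction, since it bears directly on your plan. The mechanism in Sections~3--5 that produces a divergent gain $\gg\log N/\log\log N$ requires \emph{two} genuinely different (asymmetric) maximal $(k,\l)$-sum-free open subsets of $\TT$, so that one can form both an even combination $F$ and an odd combination $G$ of the balanced functions $f_t$ and run the M\"obius sieve against the variant of the weak Littlewood conjecture (Corollary~\ref{cor: key}). Lemma~\ref{lem: k=l+1} shows that when $k=\l+1$ every maximal $(k,\l)$-sum-free open set in $\TT$ is symmetric, which is precisely why this machinery does not apply to $(2,1)$. Your mollified middle-third indicator is the symmetric extremal set, so your smooth regime inherits exactly this obstruction; without a new idea replacing the asymmetric second generator, the argument cannot beat a constant. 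If you want to pursue this, the honest target suggested by the paper is the case $k\geq\l+2$, where the asymmetric extremal sets exist.
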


On the other hand, a recent breakthrough by Eberhard, Green, and Manners \cite{EGM} proved that $\M_{(2,1)}(N)= (1/3+o(1))N$. More precisely, they showed that for every $\varepsilon>0$, when $N$ is large enough, there is a set $A\subseteq \NN^{>0}$ of size $N$, such that every subset of $A$ of size at least $(1/3+\varepsilon)N$ contains $x,y,z$ with $x+y=z$. This result is one of the first beautiful applications of the arithmetic regularity lemma. Later, using a completely different argument,  the result is generalized by Eberhard \cite{Eberhard15} to \emph{$k$-sum-free} set. A set $A$ is $k$-sum-free if for every $y,x_1,\dots,x_k\in A$, $y\neq \sum_{i=1}^k x_i$. Eberhard proved that  for every $\varepsilon>0$, there is a set $A\subseteq \NN^{>0}$ of size $N$, such that every subset of $A$ of size at least $(1/(k+1)+\varepsilon)N$ contains a $k$-sum.
 For more background we refer to the survey \cite{TV17}.

In this paper, we study the analogue of the Erd\H os sum-free set problem for $(k,\l)$-sum-free sets.  Given two positive integers $k,\l$ with $k>\l$, a set $A$ is \emph{$(k,\l)$-sum-free} if for every $x_1,\dots,x_{k},y_1,\dots,y_\l\in A$, $\sum_{i=1}^kx_k\neq\sum_{j=1}^\l y_j$.
For example, using the notation of $(k,\l)$-sum-free, sum-free is $(2,1)$-sum-free; $k$-sum-free is $(k,1)$-sum-free. Finding largest $(k,\l)$-sum-free sets in some given structures is well-studied in the past fifty years, for example, the size of the maximum $(k,\l)$-sum-free sets in finite cyclic groups was determined recently by Bajnok and Matzke \cite{BM19}, and the size in compact abelian groups was determined by Kravitz \cite{N19}.

For every $A\subseteq\NN^{>0}$, let
\[
\M_{(k,\l)}(A)=\max
_{\substack{S\subseteq A\\S\text{ is } (k,\l)\text{-sum-free}}}|S|,\quad\text{and}\quad \M_{(k,\l)}(N)=\inf_{\substack{A\subseteq\NN^{>0}\\|A|=N}}\M_{(k,\l)}(A).
\]
The problem of determining $\M_{(k,\l)}(N)$ is suggested by Bajnok~\cite[Problem G.41]{Bbook}. In fact, we can also make the following conjecture for $(k,\l)$-sum-free set, which is an analogue of Conjecture \ref{conj:main}.
\begin{conjecture}\label{conj:kl}
Let $k>\l>0$. There is a constant $c=c(k,\l)>0$, and a function $\omega(N)\to\infty$ as $N\to\infty$, such that $$ cN+\omega(N)< \M_{(k,\l)}(N)< (c+\varepsilon)N,$$
for every $\varepsilon>0$.
\end{conjecture}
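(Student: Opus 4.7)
The plan decomposes into three parts: identifying $c(k,\l)$, establishing the matching upper bound, and pushing the lower bound past $c(k,\l)N$ by a diverging term. For the first part I would pass to the continuous model $\TT=\RR/\ZZ$ and set
\[
c(k,\l) \;=\; \sup\{\mu(S) : S\subseteq\TT \text{ Borel and $(k,\l)$-sum-free}\},
\]
where $\mu$ is Lebesgue measure. The extremal $S$ should be an interval (or a short union of intervals) chosen so that its $k$-fold sumset in $\TT$ avoids its $\l$-fold sumset; this is the continuous analogue of the optimization Bajnok--Matzke carry out in cyclic groups, and in the test cases it must recover $c(2,1)=1/3$ and, more generally, $c(k,1)=1/(k+1)$ as in Eberhard's work.

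\textbf{Upper bound.} For $\M_{(k,\l)}(N)<(c+\varepsilon)N$, the plan is to adapt the arithmetic regularity lemma construction of Eberhard--Green--Manners. Given $\varepsilon>0$, one builds $A\subseteq\NN^{>0}$ of size $N$ that ``models'' an extremal $S\subseteq\TT$ via a generic irrational rotation, so that any subset of $A$ of density $c+\varepsilon$ is modelled by a Borel subset of $\TT$ of measure exceeding $c(k,\l)$. By the very definition of $c(k,\l)$ such a Borel set must contain a $(k,\l)$-sum, and the regularity lemma lifts this continuous sum back to an actual $(k,\l)$-sum inside $A$. The main technical burden is verifying that the regularity approximation is fine enough to detect the $(k,\l)$-equation, which is multilinear of a slightly more delicate shape than the $(2,1)$ or $(k,1)$ equations.

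\textbf{Lower bound.} For any $A=\{a_1,\ldots,a_N\}\subseteq\NN^{>0}$ and any fixed $(k,\l)$-sum-free $S\subseteq\TT$ with $\mu(S)=c(k,\l)$, the random set $A_\theta=\{a_i:\theta a_i \bmod 1\in S\}$ is $(k,\l)$-sum-free, and averaging gives $\mathbb{E}_\theta|A_\theta|=c(k,\l)N$; hence $\M_{(k,\l)}(N)\ge c(k,\l)N$. To push this to $c(k,\l)N+\omega(N)$ I would follow Bourgain's Fourier strategy: replace $\e_S$ by a smoothed kernel $\psi$ whose Fourier transform decays, expand
$\int_\TT \prod_i \psi(\theta a_i)\,d\theta$
via the multilinear Fourier expansion, and isolate a ``main term'' equal to $c(k,\l)N$ together with an error controlled by high-order moments of $\widehat{\e_A}$. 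A Plancherel-type calculation on the $(k,\l)$-multilinear form, combined with a pigeonhole over a family of kernels $\psi$, should produce an unbounded gain whenever the arithmetic of $(k,\l)$ leaves an exploitable slack.

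\textbf{Main obstacle.} The hard part is precisely the $\omega(N)\to\infty$ refinement: Bourgain's technique yields only a bounded additive improvement in the $(2,1)$ case, which is why Conjecture~\ref{conj:main} remains open. For $(k,\l)$ with $k>\l\ge 1$ the multilinear Fourier expression produces additional cancellation whenever $k$ and $\l$ satisfy certain arithmetic conditions (divisibility or parity relations that let many small terms be added in phase), and I expect these conditions to single out an explicit infinite family of pairs $(k,\l)$ for which the plan succeeds in full. For $(k,\l)$ outside this family — in particular for $(2,1)$ itself — the approach hits the same wall Bourgain did and yields only $c(k,\l)N+O(1)$, which matches exactly the ``infinitely many $(k,\l)$'' statement promised in the abstract.
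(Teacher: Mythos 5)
First, note that the statement you are addressing is a \emph{conjecture}: the paper itself does not prove it in full, but only (a) determines the constant $c(k,\l)=\frac{1}{k+\l}$ and the matching upper bound $\M_{(k,\l)}(N)=(\frac{1}{k+\l}+o(1))N$ for all $(k,\l)$, and (b) obtains the $\omega(N)$ gain only for an infinite family of pairs. Your plan has the same overall architecture for the lower bound, but diverges from the paper in two substantive ways and leaves the decisive mechanism unidentified. For the constant, you never actually compute $c(k,\l)$; the paper gets $\mu_H(S)\le\frac{1}{k+\l}$ for open $(k,\l)$-sum-free $S\subseteq\TT$ from Kneser's inequality ($(k+\l)\mu_H(S)\le\mu_H(kS)+\mu_H(\l S)\le 1$) and matches it with explicit intervals $\Omega_t$. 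For the upper bound, your arithmetic-regularity route is genuinely different from the paper's: the paper instead shows that F\o lner sequences in $(\NN,\cdot)$ work, by passing to an ultraproduct with Loeb measure and proving a structural/periodicity theorem (a generalization of \L uczak--Schoen) giving $\widetilde{d}(A)\le\frac{1}{k+\l}$ for infinite $(k,\l)$-sum-free sets; whether a regularity-lemma proof can be made to handle the general $(k,\l)$ equation is exactly the ``main technical burden'' you defer, so this part of your plan is not yet a proof.

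For the $\omega(N)$ refinement, your description of ``additional cancellation under arithmetic conditions on $(k,\l)$'' gestures at the right phenomenon but misses the paper's actual mechanism, which is structural rather than purely Fourier-analytic: when $k-\l\ge 2$ there are several \emph{asymmetric} extremal open sets $\Omega_t$ in $\TT$ (whereas for $k=\l+1$ every extremal open set is forced to be symmetric, which is the precise reason the method cannot touch $(2,1)$). The gain comes from forming linear combinations $F=\sum_t f_t$ and $G=\sum_t \lambda_t f_t$ of the balanced functions of these translates, with coefficients chosen via an explicit trigonometric determinant computation, then sieving with a \emph{weighted} (non-multiplicative) M\"obius function to reduce $\bigl\Vert\sum_{m\in A}f_t(mx)\bigr\Vert_{L^1}$ to a variant of the weak Littlewood inequality; this is where the restriction to pairs such as $k=5\l$ or $k=(u+v)\l/(u-v)$ enters. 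Without identifying this asymmetry and the coefficient/sieve construction, your plan for the $\omega(N)$ term remains a hope rather than an argument.
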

As we mentioned above, the constant $c(k,\l)$ in Conjecture~\ref{conj:kl} for $(k,\l)=(2,1)$ is determined by Eberhard, Green, and Manners \cite{EGM}, and for $(k,\l)=(k,1)$ is determined by Eberhard \cite{Eberhard15}. The conjecture for $(k,\l)=(3,1)$ is confirmed by Bourgain~\cite{Bourgain97}.

Our first result determines the constant $c(k,\l)$ in Conjecture~\ref{conj:kl} for every $(k,\l)$ (see statements (i) and (iv) of Theorem~\ref{thm:one}), which answers a question asked by Bajnok~\cite{Bbook} when the ambient group is $\ZZ$. The statement (ii) of Theorem~\ref{thm:one} also confirms Conjecture~\ref{conj:kl} for infinitely many $(k,\l)$.
\begin{theorem}\label{thm:one}
Let $k,\l$ be two positive integers and $k>\l$. Then the following hold:\medskip

\noindent\emph{(i)} for every $k,\l$, we have $\M_{(k,\l)}(N)\geq\frac{N}{k+\l}$.\medskip

\noindent\emph{(ii)} suppose $k=5\l$. Then 
\begin{equation}\label{eq: main thm}
\M_{(k,\l)}(N)\geq\frac{N}{k+\l}+c\frac{\log N}{\log\log N},
\end{equation}
\quad\ \  where $c>0$ is an absolute constant that only depends on $k,\l$.\medskip

\noindent\emph{(iii)} for every set $A$ of $N$ positive integers, for every positive even integer $u$, there is an odd integer $v<u$ such that if $k=(u+v)\l/(u-v)$, then
\begin{equation}\label{eq: main thm 2}
\M_{(k,\l)}(A)\geq\frac{N}{k+\l}+c\frac{\log N}{\log\log N},
\end{equation}
\quad\ \  where $c>0$ is an absolute constant that only depends on $k,\l$.\medskip

\noindent\emph{(iv)} for every $k,\l$, we have $\M_{(k,\l)}(N)=\big(\frac{1}{k+\l}+o(1)\big)N$.
\end{theorem}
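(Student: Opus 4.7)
The plan is to handle parts (i) and (iv) first, which together pin down $c(k,\l)=1/(k+\l)$, and then the quantitative improvements (ii) and (iii). For part (i), I would run Erd\H{o}s's random-dilation argument. Pick a prime $p>k\cdot\max(A)$, and build a $(k,\l)$-sum-free subset $I\subset\ZZ_p$ of density $\approx 1/(k+\l)$ by taking $I$ to be an interval of length $\lfloor p/(k+\l)\rfloor$ centered at $\lfloor p/(2(k-\l))\rfloor$. Then $kI$ and $\l I$ are intervals in $\ZZ_p$ whose centers differ by $p/2$ and whose combined length $(k+\l)|I|$ fits inside $\ZZ_p$, so $kI\cap\l I=\emptyset$ and $I$ is $(k,\l)$-sum-free mod $p$. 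For a uniformly random $t\in\ZZ_p^*$, the set $A_t:=\{a\in A:ta\bmod p\in I\}$ inherits the $(k,\l)$-sum-free property in $\ZZ$ (both sides of a candidate equation are bounded by $k\max(A)<p$), and $\E|A_t|=N|I|/(p-1)\geq N/(k+\l)$.

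For part (iv), I would exhibit, for each $\varepsilon>0$ and large $N$, a set $A\subseteq\NN^{>0}$ of size $N$ whose largest $(k,\l)$-sum-free subset has size at most $(1/(k+\l)+\varepsilon)N$, generalizing Eberhard--Green--Manners \cite{EGM} and Eberhard \cite{Eberhard15}. I would take $A$ to be (a dilation of) the first $N$ elements of a structured, Bohr-type subset of $\ZZ_M$ for a large auxiliary modulus $M$; applying the arithmetic regularity lemma to the indicator of a candidate $(k,\l)$-sum-free $S\subseteq A$ reduces the problem to bounding the density of $(k,\l)$-sum-free subsets of bounded-complexity nilsequences, which is in turn controlled by the density in cyclic groups. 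The target bound $1/(k+\l)+o(1)$ in $\ZZ_M$ then follows from a Fourier/counting argument applied to the equation $\sum_{i=1}^k x_i=\sum_{j=1}^\l y_j$.

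Parts (ii) and (iii) demand the extra $c\log N/\log\log N$. I would refine the random-dilation argument of (i) in the spirit of Bourgain's Fourier-analytic treatment of $(2,1)$-sum-free sets: parametrize a family $\{I_s\}$ of $(k,\l)$-sum-free sets in $\ZZ_p$, each of density $\approx 1/(k+\l)$, indexed by a shift $s$ in a subgroup of $\ZZ_p^*$, and apply Plancherel to $\widehat{\e_A}$ while averaging $|A_t|$ over both $t$ and $s$. Separating the trivial Fourier coefficient from the rest and trading an $L^2$ bound against a pointwise estimate on the large spectrum pushes the max of $|A_t|$ above $N/(k+\l)$ by the desired logarithmic amount, provided $\{I_s\}$ is rich enough to saturate the Fourier inequality. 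The special ratios $k=5\l$ and $k=(u+v)\l/(u-v)$ (with $u$ even, $v$ odd) are precisely the algebraic conditions under which such a rich family exists: they guarantee that the step by which an admissible $I_s$ can be translated while remaining $(k,\l)$-sum-free divides $p-1$ for a positive-density set of primes $p$, yielding $\gg\log p$ admissible shifts to feed the Fourier argument. The fact that (iii) gives a lower bound on $\M_{(k,\l)}(A)$ for \emph{every} $A$ (rather than just its infimum) will follow because the dilation argument itself is pointwise in $A$.

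The main obstacle, and the reason parts (ii) and (iii) do not cover all $(k,\l)$, is that this richness condition on $\{I_s\}$ seems to demand precisely the algebraic structure of $(k,\l)$ encoded by the parameters $(u,v)$. Extending the argument to arbitrary $(k,\l)$ would require a different source of admissible shifts (e.g.\ via Gauss-sum or additive-combinatorial inputs), and closing this gap is exactly the still-open content of Conjecture~\ref{conj:kl}.
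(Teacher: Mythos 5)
Part (i) of your proposal is essentially the paper's argument: your random dilation $a\mapsto ta \bmod p$ into an interval of length $\lfloor p/(k+\l)\rfloor$ centred at $p/(2(k-\l))$ is the discrete version of the paper's averaging of $\sum_{n\in A}\e_{\Omega_t}(nx)$ over the torus, with $\Omega_t$ as in Lemma~\ref{lem:T}. Parts (ii) and (iii), however, have a genuine gap. The mechanism you describe --- Plancherel applied to $\widehat{\e_A}$, averaging over dilations and shifts, and ``trading an $L^2$ bound against a pointwise estimate on the large spectrum'' --- cannot produce a $\log N/\log\log N$ gain: $L^2$ arguments of this kind yield at best an $O(1)$ improvement over $N/(k+\l)$, which is exactly why Conjecture~\ref{conj:main} has resisted such attacks. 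What Bourgain does, and what the paper does, is an $L^1$ lower bound of weak-Littlewood type: one constructs a dual test function $\Phi$ with $\|\Phi\|_\infty=O(1)$ (the McGehee--Pigno--Smith-style Lemma~\ref{technical-lemma}) that pairs to $\gg\log N$ against the first harmonics of $\sum_{m\in A}f_t(mx)$, where $f_t=\e_{\Omega_t}-\frac{1}{k+\l}$, and one must then annihilate all higher harmonics. That annihilation is a M\"obius sieve $\sum_m\frac{\eta(m)}{m}f_t(mx)$ over square-free $m$ with small prime factors, and it only closes up when the sign pattern of the Fourier coefficients $\frac{1}{n}\sin\big(\frac{(k-\l)n\pi}{k+\l}\big)$ is (essentially) multiplicative; one also needs $k-\l\ge2$ so that the family $\{\Omega_t\}$ is asymmetric enough to produce both a cosine-type combination $F$ and a sine-type combination $G$ via the determinant Lemma~\ref{lem:matrix}, and the Mertens product over the sieve is what costs the $\log\log N$. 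This multiplicativity --- not a divisibility condition on admissible shifts modulo $p$ --- is where $k=5\l$ and $k=(u+v)\l/(u-v)$ come from, and your sketch contains neither the $L^1$ dual-function step nor the sieve, so the claimed bound does not follow from it.

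For part (iv) you propose the Eberhard--Green--Manners arithmetic regularity route. The paper takes a genuinely different path: the extremal examples are F\o lner sequences in $(\NN,\cdot)$, and Theorem~\ref{thm:folner} (an ultraproduct/Loeb-measure transfer) shows that a $(k,\l)$-sum-free subset of density $\delta$ in some $\Phi_m$ produces an infinite $(k,\l)$-sum-free $A\subseteq\NN$ with $\widetilde d(A)\geq\delta$; the heart of the matter is then the structural Theorem~\ref{thm:periodic} (a generalization of the \L uczak--Schoen theorem, proved via Szemer\'edi's theorem and a disjointness argument among translates), which forces $\widetilde d(A)\le\frac{1}{k+\l}$. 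Your replacement --- ``reduces to bounded-complexity nilsequences, controlled by the density in cyclic groups, then a Fourier/counting argument'' --- is not a proof: already for $(k,1)$ Eberhard abandoned the regularity approach, and the solution count for $\sum_{i=1}^k x_i=\sum_{j=1}^{\l}y_j$ in dense subsets of the relevant structured sets is precisely the step you would need to supply. So the constant $c(k,\l)=\frac{1}{k+\l}$ is correct and your overall architecture is reasonable, but as written both the upper bound in (iv) and the logarithmic gains in (ii)--(iii) are missing their essential ingredients.
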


We remark that Theorem~\ref{thm:one} (iii) also implies estimate (\ref{eq: main thm}) when $k=3\l$, which in particular covers the $(3,1)$-sum-free case obtained by Bourgain. This is because when $u=2$, the only possible value of $v$ is $1$, and this gives us $k=3\l$. It follows that estimate \eqref{eq: main thm 2} holds for every  $N$-element set $A$ when $k=3\l$. Hence, by the definition of $\M_{(k,\l)}(N)$, we prove estimate \eqref{eq: main thm} when $k=3\l$. 


The upper bound construction given by Eberhard, Green, and Manners \cite{EGM} for  $(2,1)$-sum-free set actually works in a more general setting: restricted $(2,1)$-sum-free set. A set $A$ is \emph{restricted $(k,\l)$-sum-free} if for every $k$ distinct elements $a_1,\dots,a_k$ in $A$, and $\l$ distinct elements $b_1,\dots,b_\l$ in $A$, we have $\sum_{i=1}^ka_i\neq\sum_{j=1}^\l b_j$. Let
\[
\widehat{\M}_{(k,\l)}(N)=\inf_{\substack{A\subseteq\NN^{>0}\\|A|=N}}\max
_{\substack{S\subseteq A\\S\text{ is restricted } (k,\l)-\text{sum free}}}|S|.
\]
Clearly, we have that $\M_{(k,\l)}(N)\leq \widehat{\M}_{(k,\l)}(N)$. Our next theorem gives us an upper bound on $\widehat{\M}_{(k,\l)}(N)$ when $k\leq2\l+1$. 

\begin{theorem}\label{thm:1.2}
Let $k,\l$ be positive integers, and $k\leq2\l+1$. Then $$\widehat{\M}_{(k,\l)}(N)=\Big(\frac{1}{k+\l}+o(1)\Big)N.$$
\end{theorem}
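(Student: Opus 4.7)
The lower bound $\widehat{\M}_{(k,\l)}(N) \geq N/(k+\l)$ is immediate: every $(k,\l)$-sum-free set is restricted $(k,\l)$-sum-free, so $\widehat{\M}_{(k,\l)}(N) \geq \M_{(k,\l)}(N)$, and Theorem~\ref{thm:one}(i) supplies the right-hand side. Hence the substance of the theorem is the matching upper bound: for every $\varepsilon > 0$ and every sufficiently large $N$, I must exhibit a set $A \subseteq \NN^{>0}$ with $|A|=N$ such that every restricted $(k,\l)$-sum-free $S \subseteq A$ satisfies $|S| \leq \bigl(\tfrac{1}{k+\l} + \varepsilon\bigr)N$.

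My plan is to reuse the witness set $A$ produced in the proof of Theorem~\ref{thm:one}(iv). That construction follows the Eberhard--Green--Manners template \cite{EGM}: $A$ is built as a structured lift to $\NN^{>0}$ of an extremal $(k,\l)$-sum-free coset inside $\ZZ/(k+\l)\ZZ$, via a Bohr-set (equivalently, arithmetic-regularity) model. With this $A$ fixed, Theorem~\ref{thm:one}(iv) already gives $\M_{(k,\l)}(A) \leq \bigl(\tfrac{1}{k+\l}+\varepsilon\bigr)N$; the remaining task is to upgrade this conclusion to the restricted analogue on the same set.

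Suppose, toward contradiction, that $S \subseteq A$ is restricted $(k,\l)$-sum-free with $|S| \geq \bigl(\tfrac{1}{k+\l}+\varepsilon\bigr)N$, and set $\lambda = |S|/N$. Fourier analysis on the Bohr model of $A$ produces a counting lemma for unrestricted sum configurations: the number of tuples $(x_1,\dots,x_k,y_1,\dots,y_\l) \in S^{k+\l}$ with $\sum_i x_i = \sum_j y_j$ is at least $c(\varepsilon)\,\lambda^{k+\l}\,N^{k+\l-1}$. Tuples with a repetition among the $x_i$ or among the $y_j$ number at most $O\bigl(\lambda^{k+\l-1}\,N^{k+\l-2}\bigr)$, which is of strictly lower order. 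The surplus tuples are therefore genuine restricted solutions, contradicting restricted sum-freeness of $S$.

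The hypothesis $k \leq 2\l+1$ is what guarantees that the main term in this count survives the various error contributions, simultaneously, from the regularity decomposition and from the distinctness constraint. The main obstacle I anticipate is the quantitative bookkeeping of these error terms: the arithmetic regularity lemma produces a model group whose parameters degrade very rapidly in $\varepsilon$, so I must verify that the gap between the main term $\lambda^{k+\l}N^{k+\l-1}$ and the degeneracy $\lambda^{k+\l-1}N^{k+\l-2}$ still outpaces the $L^\infty$-error of the Fourier approximation on the model. Tracking how the counting lemma's constants depend on $k,\l$ shows that this balance holds precisely in the regime $k \leq 2\l+1$; beyond that regime the degenerate tuples can match the main term, and a new construction tuned specifically to the restricted problem would be needed.
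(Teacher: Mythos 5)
Your lower bound is fine, but the upper bound has a genuine gap, and it sits exactly where your argument leans hardest. First, the paper's proof of Theorem~\ref{thm:one}(iv) is not an arithmetic-regularity/Bohr-set counting argument: the witness sets are F\o lner sets in the multiplicative semigroup $(\NN,\cdot)$, and the proof (Theorems~\ref{thm:periodic} and~\ref{thm:folner}) passes through an ultraproduct, the Loeb measure, and the structural Lemmas~\ref{lem:5.1}--\ref{lem:5.2}. That machinery produces the \emph{existence} of a single tuple with $\sum_i x_i=\sum_j y_j$ in any dense subset; it gives no count of solutions, and the single tuple it produces may be fully degenerate (e.g.\ $kx=\l y$), which is useless against restricted sum-freeness. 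So the counting lemma you invoke --- at least $c(\varepsilon)\lambda^{k+\l}N^{k+\l-1}$ solutions inside an arbitrary dense subset of the extremal $A$ --- is not available from the paper and is the entire difficulty; supersaturation for subsets of an \emph{arbitrary} $N$-element set (as opposed to an interval or a group) does not follow from the corresponding existence statement by any routine averaging, and you have not supplied a proof of it for the multiplicatively structured sets actually in play.

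Second, your explanation of the hypothesis $k\leq 2\l+1$ does not withstand scrutiny: in the framework you describe, the degenerate tuples number $O(N^{k+\l-2})$ and are of strictly lower order than the claimed main term for \emph{every} pair $k>\l$, so your argument, if it worked, would prove the theorem with no restriction on $k$ --- which should make you suspicious. In the paper the hypothesis enters for a purely combinatorial reason: the restricted analogues of Lemmas~\ref{lem:5.1} and~\ref{lem:5.2} (Lemmas~\ref{lem:81} and~\ref{lem:7.2}) rebuild the pairwise-disjoint translate families $\C(u),\D(v)$ and $\MM(u),\N(v)$ while maintaining distinctness of summands, which forces padding the sums with auxiliary elements from a reservoir $\mathcal{E}$; the case analysis there needs $u_2-1-\l\leq\l$, i.e.\ $k\leq 2\l+1$, and beyond that range the translates can no longer be kept disjoint. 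To repair your proof you would either have to prove the supersaturation statement for the paper's F\o lner witnesses (a substantial new result) or follow the paper's route and redo the structural lemmas in the restricted setting.
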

\subsection*{Overview}
The paper is organized as follows. In the next section, we provide  some basic definitions and properties in additive combinatorics, harmonic analysis, and model theory (or more precisely, nonstandard analysis) used later in the proof. In Section 3, we prove a variant of the weak Littlewood conjecture, based on the ideas introduced by Bourgain~\cite{Bourgain97}. Theorem~\ref{thm:one} (i) is proved by using the probabilistic argument introduced by Erd\H os, and some structural results for the $(k,\l)$-sum-free open set on the torus. This is included in Section 4. One of the main parts of the paper is to prove Theorem~\ref{thm:one} (ii) and (iii). The special case for $(3,1)$-sum-free set is proved by Bourgain \cite{Bourgain97}, but his argument relies heavily on the fact that a certain term of the Fourier coefficient of the characteristic function is multiplicative, which is not true for the other $(k,\l)$. Here we introduce a different sieve function, as well as a finer control on the functions we constructed. We will discuss it in detail in Section 5. In Sections 6 and 7, we prove Theorem~\ref{thm:one} (iv). The proof goes by showing that the constructions given by Eberhard~\cite{Eberhard15} for $(k,1)$-sum-free sets, the F\o lner sequence, is still the correct construction for the other $(k,\l)$-sum-free sets. The new ingredients contain structural results for the large infinite $(k,\l)$-sum-free sets, which can be viewed as a generalization of the \L uczak–Schoen Theorem \cite{LS97}.
We will prove Theorem~\ref{thm:1.2} in Section 8. In Section~9, we make some concluding remarks, and pose some open problems.

\section{Preliminaries}
\subsection{Additive combinatorics}
Throughout the paper, we use standard definitions and notation in additive combinatorics as given in \cite{additive}. Let $p$ be a prime, and let $m,n,N$ ranging over positive integers. Given $a,b,N\in \NN$ and $a<b$, let $[a,b]:=[a,b]\cap\NN$, and let $[N]:=[1,N]$. We use the standard Vinogradov notation. That is, $f\ll g$ means $f=O(g)$, and $f\asymp g$ if $f\ll g$ and $f\gg g$. 
Given $A,B\subseteq \ZZ$, we write
\[
A+B:=\{a+b\mid a\in A,b\in B\}, \quad\text{and}\quad AB:=\{ab\mid a\in A,b\in B\}.
\]
When $A=\{x\}$, we simply write $x+B:=\{x\}+B$ and $x\cdot B:=\{x\}B$. Given $A\subseteq \ZZ$, let 
\[
kA:=\{a_1+\dots +a_k\mid a_1,\dots,a_k\in A\},
\]
for integer $k\geq2$. For example, $2\cdot\NN$ denotes the set of even natural numbers, while $2\NN$ denotes $\NN+\NN$ which is still $\NN$. Using this notation, a set $A$ is \emph{$(k,\l)$-sum-free} if $kA\cap \l A=\varnothing$.

We also define the restricted sums. Let
\begin{align*}
&A\widehat{+}B:=\{a+b\mid a\in A,b\in B, a\neq b\},\\
&\widehat{kA}:=\{a_1+\dots +a_k\mid a_1,\dots,a_k\in A, \text{ all of them are distinct}\}.
\end{align*}
Thus a set $A$ is \emph{restricted $(k,\l)$-sum-free} if $\widehat{kA}\cap \widehat{\l A}=\varnothing$.

Let $f:\ZZ\to\mathbb{C}$ be a function. Define $\widehat{f}:\TT\to\mathbb{C}$, where $\TT=\RR/\ZZ$ is the 1-dimensional torus, and for every $r\in\TT$,
\[
\widehat{f}(r)=\sum_{x}f(x)e(-rx),
\]
where $e(\theta)=e^{2\pi i\theta}$. By Fourier Inversion, for every $x\in\ZZ$,
\[
f(x)=\int_\TT \widehat{f}(r)e(rx) dr.
\]

Let $\mu:\NN^{>0}\to\mathbb{C}$ be the \emph{M\"obius function}. Recall that $\mu$ is supported on the square-free integers, and $\mu(n)=(-1)^{\omega(n)}$ when $n$ is square-free, where $\omega(n)$ counts the number of distinct prime factors of $n$. By Inclusive-Exclusive Principle, \[
\sum_{d\mid n}\mu(d)=\begin{cases}
0\quad &\text{ if }n>1,\\
1&\text{ if }n=1.
\end{cases}
\]

\subsection{Nonstandard analysis}
We give some basic definitions in nonstandard analysis which will be used later in the proofs. For more systematic accounts we refer to \cite{BT14,non}. Let $S$ be a set with infinitely many elements. An \emph{ultrafilter} $\mathscr{U}$ on $S$ is a collection of subsets of $S$, such that the characteristic function $\mathbbm{1}_{\mathscr{U}}:2^S\to \{0,1\}$ is a finitely additive $\{0,1\}$-valued probability measure on $S$. An ultrafilter is \emph{principal} if it consists of all sets containing some element $s\in S$. Let $\beta S$ denotes the collection of all ultrafilters. One can embed $S$ into $\beta S$, by mapping $x\in S$ to the principal ultrafilter generated by $x$. By a standard application of Zorn's Lemma, $\beta S\setminus S$ is non-empty.

Fix $\mathscr{U}\in \beta \NN\setminus \NN$, and let $M_n$ be a structure for each $n\in \NN$. The \emph{ultraproduct} $\prod_{n\to\mathscr{U}}M_n$ is a space consists of all ultralimits $\lim_{n\to\mathscr{U}}x_n$ of sequences $x_n$ defined in $M_n$, with $\lim_{n\to\mathscr{U}}x_n=\lim_{n\to\mathscr{U}}y_n$ if two sequences $\{x_n\}$ and $\{y_n\}$ agree on a set in $\mathscr{U}$. Let ${^{*}\RR}:=\prod_{n\to \mathscr{U}}\RR$ be the  hyperreal field. Every finite hyperreal number $\xi\in{^{*}\RR}$ is infinitely close to a unique real number $r\in\RR$, called the \emph{standard part} of $\xi$. In this case, we use the notation $r=\mathrm{st}(\xi)$. 

Given a sequence of finite non-empty sets $F_n$, let $\mu_n(X)=|X\cap F_n|/|F_n|$ be a uniform probability measure. Let $F=\prod_{n\to\mathscr{U}}F_n$ be an ultraproduct. The \emph{Loeb measure} \cite{Loeb} $\mu_L$ on $F$ is the unique probability measure on the $\sigma$-algebra generated by the Boolean algebra of internal subsets of $F$, such that when $X=\prod_{n\to\mathscr{U}}X_n$ is an internal subset of $F$, we have
\[
\mu_L(X)=\mathrm{st}\Big(\lim_{n\to\mathscr{U}}\mu_{n}(X_n)\Big).
\]

\subsection{Determinants of certain matrices}
We make use of the following lemma several times in the later proofs, which records a fact about two special matrices. 
\begin{lemma}\label{lem:matrix}
Let $\theta_1,\ldots,\theta_n\in\RR$. Consider two matrices
\begin{equation*}
    A_n=\left(\begin{array}{cccc}
        \sin\theta_1 & \sin\theta_2 & \cdots & \sin\theta_n \\
        \sin2\theta_1 & \sin2\theta_2 & \cdots & \sin2\theta_n \\
        \cdots&\cdots&\cdots&\cdots \\
        \sin n\theta_1 & \sin n\theta_2 & \cdots & \sin n\theta_n
    \end{array}\right),
\end{equation*}
and
\begin{equation*}
    B_n=\left(\begin{array}{cccc}
        1 & 1 & \cdots & 1 \\
        \cos \theta_1 & \cos\theta_2 & \cdots & \cos\theta_n \\
        \cos 2\theta_1 & \cos2\theta_2 & \cdots & \cos2\theta_n \\
        \cdots&\cdots&\cdots&\cdots \\
        \cos (n\!-\!1)\theta_1 & \cos (n\!-\!1)\theta_2 & \cdots & \cos(n\!-\!1)\theta_n
    \end{array}\right).
\end{equation*}
Then we have the formula:
\begin{equation}
\label{determinant-eq-1}
    \det(A_n)=2^{n-1}\big(\prod_{k=1}^n\sin\theta_k\big)\det(B_n);
\end{equation}
and
\begin{equation}
\label{determinant-eq-2}
    \det(B_n)=2^{(n-1)(n-2)/2}\prod_{1\leq k<l\leq n}(\cos\theta_l-\cos\theta_k).
\end{equation}
As a result, 
\begin{equation*}
    \det(A_n)=2^{n(n-1)/2}\big(\prod_{k=1}^n\sin\theta_k\big)\prod_{1\leq k<l\leq n}(\cos\theta_l-\cos\theta_k).
\end{equation*}
\end{lemma}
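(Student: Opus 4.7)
The plan is to reduce both determinants to a common Vandermonde-type matrix in the variables $\cos\theta_j$, via the Chebyshev polynomials $T_m$ (first kind) and $U_m$ (second kind), and then simply track the factors of $2$ that appear as leading coefficients.

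For equation \eqref{determinant-eq-1}, I would start from the identity $\sin(m\theta)=\sin\theta\cdot U_{m-1}(\cos\theta)$, where $U_{m-1}$ is a polynomial of degree $m-1$ in $\cos\theta$ with leading coefficient $2^{m-1}$. Factoring $\sin\theta_j$ out of the $j$-th column of $A_n$ gives the factor $\prod_{k=1}^n\sin\theta_k$ and leaves a matrix $A_n'$ whose $(i,j)$-entry is $U_{i-1}(\cos\theta_j)$. Writing $U_{i-1}$ as a linear combination of $1,x,\dots,x^{i-1}$ shows that $A_n'=L_U\,C_n$, where $C_n$ is the Vandermonde matrix with $(i,j)$-entry $(\cos\theta_j)^{i-1}$ and $L_U$ is lower triangular with diagonal $(1,2,4,\dots,2^{n-1})$. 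Similarly, from $\cos(m\theta)=T_m(\cos\theta)$ with $T_m$ having leading coefficient $2^{m-1}$ for $m\geq 1$ (and $T_0=1$), we get $B_n=L_T\,C_n$, with $L_T$ lower triangular with diagonal $(1,1,2,4,\dots,2^{n-2})$. Then
\begin{equation*}
\det(A_n')=\det(L_U L_T^{-1})\det(B_n)=\frac{2^{0+1+\cdots+(n-1)}}{2^{0+0+1+\cdots+(n-2)}}\det(B_n)=2^{n-1}\det(B_n),
\end{equation*}
which, after reinserting $\prod_k\sin\theta_k$, gives \eqref{determinant-eq-1}.

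For equation \eqref{determinant-eq-2}, the same decomposition $B_n=L_T\,C_n$ immediately yields
\begin{equation*}
\det(B_n)=\det(L_T)\det(C_n)=2^{(n-1)(n-2)/2}\prod_{1\leq k<l\leq n}(\cos\theta_l-\cos\theta_k),
\end{equation*}
using the standard Vandermonde formula $\det(C_n)=\prod_{k<l}(\cos\theta_l-\cos\theta_k)$ and the exponent count $0+1+\cdots+(n-2)=(n-1)(n-2)/2$. Substituting this into \eqref{determinant-eq-1} gives the combined formula with exponent $(n-1)+(n-1)(n-2)/2=n(n-1)/2$.

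No step here is really an obstacle; the only mildly delicate point is bookkeeping the exponent of $2$ in $\det(L_T)$, because the first two diagonal entries are both $1$ (from $T_0=1$ and $T_1(x)=x$), while from $T_2$ onward the leading coefficient of $T_{m}$ jumps to $2^{m-1}$. Once this is handled carefully, both displayed identities and their consequence follow from the Vandermonde determinant and a triangular change of basis, with no further computation required.
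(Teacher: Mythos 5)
Your proof is correct. For the second identity \eqref{determinant-eq-2} you do essentially what the paper does: write each row of $B_n$ as a Chebyshev polynomial $T_{i-1}$ evaluated at $\cos\theta_j$, observe that the change of basis from $\{T_0,\dots,T_{n-1}\}$ to $\{1,x,\dots,x^{n-1}\}$ is triangular with diagonal $(1,1,2,\dots,2^{n-2})$, and invoke the Vandermonde formula; your exponent bookkeeping $0+0+1+\cdots+(n-2)=(n-1)(n-2)/2$ is right. For the first identity \eqref{determinant-eq-1}, however, your route is genuinely different. The paper never leaves trigonometry: it performs two telescoping rounds of row operations (subtracting consecutive rows of $A_n$, then adding consecutive rows of the result), using the product-to-sum identities at half-angles to extract $2^n\prod_k\sin(\theta_k/2)$ and then $2^{n-1}\prod_k\cos(\theta_k/2)$, which recombine to $2^{n-1}\prod_k\sin\theta_k$. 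You instead factor $\sin\theta_j$ out of each column via $\sin(m\theta)=\sin\theta\,U_{m-1}(\cos\theta)$ and reduce \emph{both} $A_n$ and $B_n$ to the same Vandermonde matrix $C_n$ through triangular factors $L_U$ and $L_T$; the identity \eqref{determinant-eq-1} then drops out as the ratio $\det(L_U)/\det(L_T)=2^{n-1}$, and you correctly flag the only delicate point, namely that the diagonal of $L_T$ starts with two $1$'s. Your version is more unified (one mechanism proves everything, and the final combined formula is obtained directly as $\det(L_U)\det(C_n)$ times $\prod_k\sin\theta_k$), at the cost of invoking the second-kind Chebyshev polynomials; the paper's version proves \eqref{determinant-eq-1} without computing either determinant, which is marginally more self-contained but requires the half-angle manipulations. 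Both are complete and correct.
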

\begin{proof} For $k=1,2,\ldots n-1$, we subtract the $k$-th row from the $(k+1)$-th row in $A_n$, and use the basic trigonometric identities so that
\begin{eqnarray*}
    \det(A_n)&=&\det\left(\begin{array}{ccc}
        2\sin\frac{\theta_1}{2}\cos\frac{\theta_1}{2}& \cdots & 2\sin\frac{\theta_n}{2}\cos\frac{\theta_n}{2}\\
        \cdots&\cdots&\cdots \\
        2\sin\frac{\theta_1}{2}\cos\frac{(2n\!-\!1)\theta_1}{2} &  \cdots &  2\sin\frac{\theta_n}{2}\cos\frac{(2n\!-\!1)\theta_n}{2}
    \end{array}\right)\\  \nonumber
    &=&2^n\big(\prod_{k=1}^n\sin\frac{\theta_k}{2}\big)\det\left(\begin{array}{ccc}
        \cos\frac{\theta_1}{2}& \cdots & \cos\frac{\theta_n}{2}\\
        \cdots&\cdots&\cdots \\ 
        \cos\frac{(2n\!-\!1)\theta_1}{2} &  \cdots &  \cos\frac{(2n\!-\!1)\theta_n}{2}
    \end{array}\right)\\ \nonumber
    &=:& C_n \det(B_n').
\end{eqnarray*}
For $k=1,2,\ldots n-1$, we add the $k$-th row to the $k+1$-th row in $B_n'$, and use the basic trigonometric identities again so that
\begin{eqnarray*}
    \det(B_n')&=&\det\left(\begin{array}{ccc}
        \cos\frac{\theta_1}{2}& \cdots & \cos\frac{\theta_n}{2}\\
        2\cos\frac{\theta_1}{2}\cos\theta_1 & \cdots & 2\cos\frac{\theta_n}{2}\cos\theta_n\\
        \cdots&\cdots&\cdots \\ 
        2\cos\frac{\theta_1}{2}\cos(n\!-\!1)\theta_1 &  \cdots &  2\cos\frac{\theta_n}{2}\cos(n\!-\!1)\theta_n
    \end{array}\right)\\
    &=&2^{n-1}\big(\prod_{k=1}^n\cos\frac{\theta_k}{2}\big)\det(B_n)
\end{eqnarray*}
Combining the calculations above we prove \eqref{determinant-eq-1}.

As for \eqref{determinant-eq-2}, we let $T_n$ be the Chebyshev polynomial
\begin{equation*}
    T_n(x)=\sum_{k=0}^{\lfloor{n/2}\rfloor}\binom{n}{2k}(x^2-1)^kx^{n-2k}.
\end{equation*}
Thus, we have $T_n(\cos x)=\cos nx$. The coefficient of the leading term, $x^n$ in $T_n(x)$ would be $a_n=2^{n-1}$. Combining this fact and several elementary row operations, we get
\begin{eqnarray*}
    \det(B_n)&=& 2^{(n-1)(n-2)/2}\det\left(\begin{array}{ccc}
        1& \cdots & 1\\
        \cos\theta_1 & \cdots & \cos\theta_n\\
        \cdots&\cdots&\cdots \\ 
        (\cos\theta_1)^{n-1} &  \cdots &  (\cos\theta_n)^{n-1}
    \end{array}\right)\\
    &=&2^{(n-1)(n-2)/2}\prod_{1\leq k<l\leq n}(\cos\theta_l-\cos\theta_k).
\end{eqnarray*}
The last equation comes from the determinant formula for Vandermonde martix. 
\end{proof}

\section{A variant of the Littlewood conjecture}

The Littlewood problem~\cite{Littlewood} is to ask that, what is 
\[
I(N):=\min_{A\subseteq\mathbb{Z}, |A|=N}\int_{\TT}\Big|\sum_{n\in A}e^{inx}\Big|d\mu(x)?
\]
The strong Littlewood conjecture asserts that the minimum occurs when $A$ is an arithmetic progression. This conjecture is still widely open. However, the weak Littlewood conjecture, $I(N)\gg\log N$, is resolved by McGehee, Pigno, and Smith \cite{MPS}, and independently by Konyagin \cite{K81}. The analogous question in discrete setting is also well studied, we refer to \cite{Green, Sanders, S17} for the interested readers. In this section, we will develop a variant of the weak Littlewood conjecture, based on the ideas given by Bourgain~\cite{Bourgain97}.

Let $\cn$ be the set of natural numbers that only contains prime factors at least $P$, where $P\asymp(\log N)^{100}$ is a prime. We will use the following lemma from \cite[Section~5]{Bourgain97}.
\begin{lemma}
\label{bourgain-lemma}
Let $A$ be a finite subset of $\ZZ^+$ with $|A|=N$. For all $R\geq1$, we define
\begin{equation*}
    A_R=\{m\in A:m<R\}.
\end{equation*}
Also, we use ${\rm Proj}_R\sum a_ke^{ikx}$ to denote the truncated sum $\sum_{|k|\leq R}a_ke^{ikx}$. Assume $|a_n|\leq 1$ and $P>(\log N)^{20}$. Then there is an absolute big constant $C$, such that
\begin{equation*}
    \Bigg\|{\rm Proj}_R\sum_{\substack{n\in\cn, m\in A}}\frac{a_n}{n}e^{imnx}\Bigg\|_2< CP^{-1/15}|A_R|^{1/2}.
\end{equation*}
\end{lemma}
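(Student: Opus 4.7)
The plan is to apply Parseval and then analyze the resulting arithmetic sum by parameterizing solutions of the multiplicative equation $m_1n_1=m_2n_2$. Writing $f(x)=\sum_{m\in A,\,n\in\cn}\tfrac{a_n}{n}e^{imnx}$ and observing that $mn\le R$ forces $m\in A_R$, Parseval gives
$$\|\mathrm{Proj}_R f\|_2^2 \;=\; \sum_{\substack{m_1,m_2\in A_R,\ n_1,n_2\in\cn \\ m_1n_1=m_2n_2\le R}} \frac{a_{n_1}\overline{a_{n_2}}}{n_1n_2}.$$
Setting $d=\gcd(m_1,m_2)$ and writing $m_1=dq$, $m_2=dp$ with $(p,q)=1$, the relation $m_1n_1=m_2n_2$ forces $n_1=pt$, $n_2=qt$ for some integer $t\ge 1$. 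The hypothesis $pt,qt\in\cn$ forces every prime divisor of $p$, $q$, $t$ to be at least $P$; in particular, each of $p,q$ is either equal to $1$ or at least $P$.

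Next I would separate the diagonal $p=q=1$ from the off-diagonal. On the diagonal we have $m_1=m_2$ and $n_1=n_2$, so using $|a_n|\le 1$,
$$\sum_{m\in A_R}\sum_{t\in\cn,\,mt\le R}\frac{1}{t^2}\;\ll\;|A_R|\sum_{t\in\cn}\frac{1}{t^2}\;\ll\;|A_R|/P,$$
which is already far smaller than the claimed bound. For the off-diagonal, by symmetry I may assume $p\ge P$, and after bounding $\sum_t t^{-2}=O(1)$ the task reduces to estimating
$$S \;:=\; \sum_{\substack{p\ge P,\,q\ge 1 \\ (p,q)=1}} \frac{T(p,q)}{pq}, \qquad T(p,q):=\#\{d:dp,dq\in A_R\}.$$

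The hard part, and the main obstacle, is showing $S\ll P^{-2/15}|A_R|$. A direct use of the trivial bound $T(p,q)\le|A_R|$ together with the divergence of $\sum_{p\ge P}1/p$ loses logarithmic factors and cannot suffice. The savings must come from combining the multiplicative bounds $T(p,q)\le\min(|A_R\cap p\ZZ|,|A_R\cap q\ZZ|)$ and $T(p,q)\le(|A_R\cap p\ZZ|\,|A_R\cap q\ZZ|)^{1/2}$ via Cauchy--Schwarz, with the identity $\sum_{p\ge P}|A_R\cap p\ZZ|/p=\sum_{a\in A_R}\sum_{p\mid a,\,p\ge P}1/p$, and Rankin-style sparsity estimates for integers whose smallest prime factor is at least $P$. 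I would dyadically decompose the $(p,q)$-sum over the sizes of $p$ and $q$ and balance the trivial bound against the sieve bound in each range; the specific exponent $1/15$ emerges from this optimisation, and carrying it out carefully is the technical heart of Bourgain's argument.
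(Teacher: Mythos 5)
First, a point of comparison: the paper does not prove this lemma at all --- it is imported verbatim from Bourgain's paper (cited as \cite[Section~5]{Bourgain97}), so there is no in-paper proof to measure your argument against. Judged on its own terms, your opening moves are correct and are the natural (indeed Bourgain's) first steps: Parseval turns $\|{\rm Proj}_R f\|_2^2$ into the weighted count of solutions of $m_1n_1=m_2n_2\le R$, the gcd parameterization $m_1=dq$, $m_2=dp$, $n_1=pt$, $n_2=qt$ with $(p,q)=1$ is right, and the diagonal is fine --- with the caveat that you must take $1\notin\cn$ (equivalently, every $n\in\cn$ satisfies $n\ge P$); if $1\in\cn$ the diagonal already contributes $|A_R|$ and the lemma is false, so this convention is forced and worth stating.

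The genuine gap is that the entire quantitative content of the lemma, namely $S\ll P^{-2/15}|A_R|$, is only announced as a plan, and the plan as described does not close. The specific identity you lean on,
\begin{equation*}
\sum_{p\ge P}\frac{|A_R\cap p\ZZ|}{p}=\sum_{a\in A_R}\ \sum_{\substack{p\mid a\\ p\ge P}}\frac{1}{p},
\end{equation*}
gives no saving by itself: $R$ is arbitrary and the elements of $A$ may be astronomically large relative to $N$, so a single $a\in A_R$ (e.g.\ $a$ divisible by every prime in $[P,X]$ with $X$ huge) has inner sum as large as roughly $\log\log X-\log\log P$, which is unbounded in terms of $N$ and $P$; summing the analogous divisor sum $\sum_{\nu\mid a,\ \nu\in\cn}\nu^{-1}$ over $a\in A_R$ can therefore exceed $|A_R|$ by an arbitrarily large factor. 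Rankin's trick and the sparsity of $\cn$ do not rescue this, because the obstruction comes from individual elements with many prime factors above $P$, not from the density of $\cn$. The actual saving must exploit the \emph{pair} condition $dp,dq\in A_R$ together with the finiteness $|A|=N$ and the truncation $dpqt\le R$ (note, for instance, that manufacturing many large values of $\sum_{p\mid a,\,p\ge P}1/p$ forces either $N$ to be large or the solutions to be destroyed by the projection); your $T(p,q)\le\min(|A_R\cap p\ZZ|,|A_R\cap q\ZZ|)$ is the right object, but the dyadic balancing that extracts $P^{-2/15}$ from it is precisely the part you have deferred. As written, the proposal is a correct reduction plus an unproven claim, so it does not constitute a proof of the lemma.
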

With Lemma~\ref{bourgain-lemma} in hand, we are able to prove our technical lemma:
\begin{lemma}
\label{technical-lemma}
Let $A$ be a finite subset of $\ZN^{>0}$ with $|A|=N$ and let $P>(\log N)^{100}$. Assume $|a_n|\leq 1$. Then for any $r_0\in\ZN^{>0}$, there exists a function $\Phi(x)$ with $\|\Phi\|_\infty<10$ such that 
\begin{equation}
\label{technical-estimate-1}
    \Big|\Big\langle\sum_{m\in A}(e^{imx}+e^{ir_0mx}),\Phi(x)\Big\rangle\Big|\geq c\log N;
\end{equation}
while for any $\be\in \ZZ$,
\begin{equation}
\label{technical-estimate-2}
    \Big|\Big\langle\sum_{\substack{n\in\cn, m\in A}}\frac{a_n}{n} e^{i\beta mnx}, \Phi(x)\Big\rangle\Big|\leq C(\log N)^{-2}.
\end{equation}
Here $c,C$ are two absolute constants.
\end{lemma}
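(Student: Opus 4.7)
The plan is to adapt the McGehee--Pigno--Smith/Konyagin construction used by Bourgain in \cite{Bourgain97} to the current setting with the doubled frequency set $A\cup r_0 A$. The test function $\Phi$ will be built as a signed sum of building blocks $\Phi_j$ attached to a dyadic decomposition of $A$; each block contributes a constant amount to the paired inner product in \eqref{technical-estimate-1}, while the Fourier localization of each $\Phi_j$ makes the inner product in \eqref{technical-estimate-2} amenable to Lemma~\ref{bourgain-lemma}.

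First, I would order $A=\{m_1<m_2<\cdots<m_N\}$ and partition it by rank, setting $A_j=\{m_i:2^{j-1}\le i<2^j\}$ for $j=1,\dots,L$ with $L=\lceil\log_2 N\rceil$. For each $j$ I would construct a trigonometric polynomial $\Phi_j$ whose Fourier transform is essentially supported on $\pm(A_j\cup r_0 A_j)$ and whose Fourier coefficients at those frequencies are of size $\asymp 1/|A_j|$. A natural starting point is
\[
\Phi_j^{(0)}=\frac{1}{2|A_j|}\sum_{m\in A_j}(e^{-imx}+e^{-ir_0 mx}),
\]
which has the right Fourier profile and yields $\langle\sum_{m\in A_j}(e^{imx}+e^{ir_0 mx}),\Phi_j^{(0)}\rangle=1$; since $\|\Phi_j^{(0)}\|_\infty$ can be large, a suitable bounded modification (e.g.\ convolution with a unit-mass kernel whose Fourier transform is $\asymp 1$ on $\pm(A_j\cup r_0 A_j)$) is applied to produce the final $\Phi_j$ with a controlled $L^\infty$ norm, losing only a constant factor in the paired inner product.

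Next, I would form $\Phi=\sum_{j=1}^L \varepsilon_j\Phi_j$, choosing signs $\varepsilon_j\in\{\pm 1\}$ by the inductive McGehee--Pigno--Smith procedure so that $\|\Phi\|_\infty<10$; the argument exploits the near-disjointness of the dyadic Fourier supports of the $\Phi_j$'s. Estimate \eqref{technical-estimate-1} then follows since the cross-block inner products vanish by Fourier disjointness, leaving a contribution $\gtrsim L\gtrsim \log N$. For \eqref{technical-estimate-2}, each $\Phi_j$ is a trigonometric polynomial of degree at most $r_0 m_{2^j}$, so
\[
\Big\langle\sum_{n\in\cn,\,m\in A}\tfrac{a_n}{n}e^{i\beta mnx},\Phi_j\Big\rangle
=\Big\langle\mathrm{Proj}_{r_0 m_{2^j}/|\beta|}\!\!\sum_{n,m}\tfrac{a_n}{n}e^{imn(\beta x)},\Phi_j\Big\rangle;
\]
a careful application of Cauchy--Schwarz together with Lemma~\ref{bourgain-lemma} produces the key saving $P^{-1/15}$ in each term, and summing over $j$ with $P>(\log N)^{100}$ yields a bound safely below $(\log N)^{-2}$.

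The main obstacle is the signed-combination step: the dilated frequencies $r_0 A_j$ can overlap with other $A_{j'}$'s, breaking the lacunarity used in the classical McGehee--Pigno--Smith argument. Careful tracking of these overlaps and adaptation of Bourgain's signed-combination technique~\cite{Bourgain97}, possibly combined with a further thinning of the dyadic scales to guarantee sufficient separation between the relevant Fourier ranges, will be needed to surmount this difficulty. The decisive quantitative input throughout is Lemma~\ref{bourgain-lemma}, whose $P^{-1/15}$ saving is what makes the $(\log N)^{-2}$ bound achievable once $P>(\log N)^{100}$ is available.
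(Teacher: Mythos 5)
Your proposal correctly identifies the overall framework (a block decomposition of $A$, a McGehee--Pigno--Smith-type test function, Fourier localization of the blocks, and Lemma~\ref{bourgain-lemma} as the quantitative engine for \eqref{technical-estimate-2}), but it has a genuine gap at the single most important step: how to make $\Phi$ bounded. You propose $\Phi=\sum_j\varepsilon_j\Phi_j$ with signs $\varepsilon_j\in\{\pm1\}$ and claim that ``near-disjointness of the dyadic Fourier supports'' yields $\|\Phi\|_\infty<10$. This is not the MPS procedure and it cannot work: a sum of $L\asymp\log N$ trigonometric polynomials, each normalized to have inner product $\asymp1$ against its block of $G$, will generically have $L^\infty$ norm growing with $L$ no matter how the signs are chosen (disjoint Fourier supports give no $L^\infty$ cancellation --- that is precisely the obstruction the MPS construction was invented to circumvent). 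The actual construction is multiplicative, not additive-with-signs: one introduces compensators $Q_k=\bigl(e^{-(|\tilde P_k|-i\mathcal H[|\tilde P_k|])}\bigr)\ast F_{|I_k|}$ (Fej\'er-smoothed exponentials of the analytic completion, so $\widehat{Q_k}$ is supported in $[-|I_k|,0]$ and $\|1-Q_k\|_2\leq2|B_k|^{-1/2}$) and sets $\Phi_k=Q_k\Phi_{k-1}+P_k$, i.e.\ $\Phi=\sum_k P_kQ_{k+1}\cdots Q_{k_0}$. The bound $\|\Phi\|_\infty<10$ then follows from the pointwise inequality $\tfrac{a}{10}+e^{-a}\leq1$, not from support considerations. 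Without some version of this exponential damping, your construction does not produce an admissible $\Phi$.

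Two secondary points. First, the cross-block terms in \eqref{technical-estimate-1} do not vanish by Fourier disjointness; they are genuine error terms of the form $\langle G_{1,k}+G_{2,k},P_k(1-Q_{k+1}\cdots Q_{k_0})\rangle$, controlled by Cauchy--Schwarz via $\|G_{i,k}\|_2\lesssim|B_k|^{1/2}$ against $\sum_{j>k}\|1-Q_j\|_2\lesssim|B_{k+1}|^{-1/2}$. With your dyadic blocks ($|A_j|\asymp2^j$) this product is $\asymp1$ per block, which is the same order as the main term and kills the argument; the paper takes blocks of size $10^{6k}$ precisely so that each error term is $O(10^{-3})\ll\tfrac12$. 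Second, the overlap of $r_0A_j$ with other blocks that you flag as the main obstacle is a non-issue in the correct construction: since $\widehat{P_k}$ is supported in $[0,b_k]$ and the $\widehat{Q_j}$ in $(-\infty,0]$, pairing against $G_1+G_2$ automatically truncates to frequencies at most $b_k$, and no thinning of scales is needed. Your treatment of \eqref{technical-estimate-2} via $\mathrm{Proj}$ and Lemma~\ref{bourgain-lemma} is essentially the paper's, once the correct $\Phi$ is in place.
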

\begin{proof} For convenience, we assume $A=\{m_1,\ldots,m_N\}$, and define
\begin{equation*}
    G(x)=\sum_{j=1}^N e^{im_jx}+e^{ir_0m_jx}.
\end{equation*}

Let $k_0$ be the largest natural number that $10^{6k_0}<N$. We group $A$ into disjoint subsets $\{B_k\}_{k=0}^{k_0}$ such that for $0\leq k\leq k_0-1$, $|B_k|=10^{6k}$. Here $B_0=\{m_1\}$, $B_1=\{m_2,\ldots,m_{10^6+1}\},\cdots$ and $B_{k_0}=A\setminus(\bigcup_{k\leq k_0-1}B_k)$. From the construction we know $|B_{k_0}|\asymp 10^{6k_0}$. For each $B_k$, we define 
\begin{equation*}
    \wt P_{k}=\frac{1}{|B_k|}\sum_{m\in B_k} e^{imx}.
\end{equation*}
We also define, after setting $F_M=\sum_{|m|\leq M}\frac{M-|m|}{M}e^{imx}$ to be the $M$-F\'ejer kernel, 
\begin{equation*}
    P_k=\wt P_{k}\ast (e^{i\xi_k x}F_{|I_k|}).
\end{equation*}
Here $I_k=[a_k,b_k]$ is the interval with $a_k=\min\{m,m\in B_k\}$, $b_k=\max\{m,m\in B_k\}$, and $\xi_k$ is the center of $I_k$. As a result, we have
\begin{equation}
\label{support-P-k}
    {\rm supp}(\wh P_k)={\rm supp}(\wh{\wt P_k})\subset I_k,
\end{equation}
and 
\begin{equation*}
    \langle G,P_k\rangle >\frac{1}{2}.
\end{equation*}

Next, for each $P_k$, we define a function $Q_k$ that served as a ``compensator". Let $\ch$ be the Hilbert transform in $L^2(\ZT)$ that $\wh{\ch{f}}(n)=-i{\rm sgn}(n)\wh f(n)$, so that $\ch f(x)\in\ZR$ when $f$ is a real-valued function. We set   
\begin{equation}
\label{Q-k}
    Q_k=\Big(e^{-(|\ti P_{k}|-i\ch[|\ti P_{k}|])}\Big)\ast F_{|I_k|}.
\end{equation}
Since the Fourier series of $|\ti P_{k}|-i\ch[|\ti P_{k}|]$ is supported in non-positive integers, the Fourier series of $e^{-(|\ti P_{k}|-i\ch[|\ti P_{k}|])}$ has the same support. Hence
\begin{equation}
\label{support-Q-k}
    {\rm supp}(\wh Q_k)\subset[-|I_k|,0].
\end{equation}
Recall the inequality $|e^{-z}-1|\leq |z|$ if $z\in\ZC$ and ${\rm Re}(z)\geq0$. Thus, noticing that the Hilbert transform $\ch$ is an $L^2$ isometry, we have 
\begin{equation*}
    \|1-Q_k\|_2\leq\big\|e^{-(|\ti P_{k}|-i\ch[|\ti P_{k}|])}-1\big\|_2\leq\|\ti P_k\|_2+\|\ch[|\ti P_{k}|]\|_2<2|B_k|^{-1/2}.
\end{equation*}

\vspace{3mm}
We will use the functions $P_k,Q_k$ to construct our test function $\Phi$. Specifically, we let $\Phi_0=P_0$ and let
\begin{equation}
\label{inductive-def}
    \Phi_k=Q_{k}\Phi_{k-1}+P_k, \hspace{1cm}1\leq k\leq k_0.
\end{equation}
Define $\Phi=\Phi_{k_0}$. We can also write down the explicit formula for $\Phi$ by
\begin{equation}
\label{explicit-def}
    \Phi=P_{k_0}+P_{k_0-1}Q_{k_0}+P_{k_0-2}Q_{k_0-1}Q_{k_0}+\cdots+P_0Q_1\cdots Q_{k_0}.
\end{equation}

We claim that $\|\Phi\|_\infty<10$. To see this, we first recall the basic inequality: $\frac{a}{10}+e^{-a}\leq1$ if $a\geq0$. Then, observing that $|P_0|=1$ and
\begin{equation*}
    \Big\|\frac{1}{10}|P_k|+|Q_k|\Big\|_\infty\leq\Big\|\big(\frac{1}{10}|\wt P_k|+e^{-|\wt P_k|}\big)\ast F_{|I_k|}\Big\|_\infty\leq\Big\|\frac{1}{10}|\wt P_k|+e^{-|\wt P_k|}\Big\|_\infty\leq1,
\end{equation*}
we argue inductively using \eqref{inductive-def} to conclude our claim.

\vspace{3mm}
Next, we will verify \eqref{technical-estimate-1}. Write
\begin{equation*}
    G_1(x)=\sum_{j=1}^N e^{im_jx}, \hspace{1cm}G_2(x)=\sum_{j=1}^Ne^{ir_0m_jx}.
\end{equation*}
Also, recalling $b_k=\max\{m,m\in B_k\}$, we define two truncated series
\begin{equation*}
    G_{1,k}(x)=\sum_{m\leq b_k,m\in A} e^{imx}, \hspace{1cm}G_{2, k}(x)=\sum_{m\leq b_k,m\in A}e^{ir_0mx}.
\end{equation*}
Therefore, using \eqref{support-P-k}, \eqref{support-Q-k}, \eqref{explicit-def} and the fact $\wh P_k\geq0$, 
\begin{eqnarray}
    |\langle G,\Phi\rangle| \!\!\!&=&\!\!\! \Big|\sum_{k=0}^{k_0}\langle G,P_{k}\rangle+\sum_{k=0}^{k_0}\langle G_1+G_2,P_k(1-Q_{k+1}\cdots Q_{k_0})\rangle\Big|\\ \label{error-in-second}
    &\geq&\!\!\! \frac{k_0}{2}-\Big|\sum_{k=0}^{k_0}\langle G_{1,k}+G_{2,k},P_k(1-Q_{k+1}\cdots Q_{k_0})\rangle\Big|.
\end{eqnarray}

Observing that $\|P_k\|_\infty,~\|Q_k\|_\infty\leq1$ and
\begin{equation}
\nonumber
    1-Q_{k+1}\cdots Q_{k_0}=(1-Q_{k+1})+Q_{k+1}(1-Q_{k+2})+\cdots+(1-Q_{k_0})Q_{k+1}\ldots Q_{k_0-1},
\end{equation}
we can derive the following estimates for $0\leq k\leq k_0-1$:
\begin{eqnarray}
    &&|\langle G_{1,k},P_k(1-Q_{k+1}\cdots Q_{k_0})\rangle|\leq \|G_{1,k}\|_2\cdot\|P_k(1-Q_{k+1}\cdots Q_M)\|_2\\ \nonumber
    &&\leq 2\times 10^{3(k-1)}\sum_{j=k+1}^{k_0}\|1-Q_j\|_2\leq8\times 10^{-3}.
\end{eqnarray}
The last inequality follows from $\|1-Q_j\|_2\leq2|B_k|^{-1/2}$.

Similarly, we can prove $|\langle G_{2,k},P_k(1-Q_{k+1}\cdots Q_{k_0})\rangle|\leq 8\times 10^{-3}$. Plugging these two estimates back to \eqref{error-in-second}, summing up $k$ and using the triangle inequality so that we can conclude 
\begin{equation*}
    \langle G,\Phi\rangle\geq\frac{k_0}{3}.
\end{equation*}
The desired estimate \eqref{technical-estimate-1} follows readily as $k_0\geq(\log N)/100$.

\vspace{3mm}
Finally, we are going to verify \eqref{technical-estimate-2}. Let 
\begin{equation*}
    H(x)=\sum_{\substack{n\in\cn, m\in A}}\frac{a_n}{n} e^{i\beta mnx}.
\end{equation*}
From \eqref{support-P-k} and \eqref{support-Q-k}, we know that for $k+1\leq j\leq k_0$,
\begin{equation*}
    {\rm supp}\{\big(P_k(1-Q_{k+1}\cdots Q_{j})\big)^\wedge\}\subset[-b_j, b_k].
\end{equation*}
Thus, using \eqref{explicit-def} we have
\begin{equation}
\label{error-estimate}
    \langle H, \Phi\rangle=\sum_{k=0}^{k_0}\langle H,P_{k}\rangle+\sum_{k=0}^{k_0}\sum_{j=k+1}^{k_0}\langle H,P_kQ_{k+1}\cdots Q_{j-1}(1-Q_j)\rangle.
\end{equation}
Here we set $Q_{k+1}Q_k=1$ in convention. Since ${\rm supp}(\wh P_k)\subset[0,b_k]$, we apply Lemma \ref{bourgain-lemma} so that
\begin{equation}
\nonumber
    |\langle H, P_{k}\rangle|\leq\|P_k\|_2\cdot\|{\rm Proj}_{b_k} H\|_2<C10^{-3(k-1)}P^{-1/15}10^{3(k-1)}=CP^{-1/15}.
\end{equation}
Summing up all the $k\leq k_0$ using the triangle inequality, we can bound the first term in \eqref{error-estimate} with
\begin{equation}
\label{first-term}
    \sum_{k=0}^{k_0}|\langle H,P_{k}\rangle|< CP^{-1/15}k_0.
\end{equation}

For the second term in \eqref{error-estimate}, we similarly have
\begin{equation*}
    |\langle H,P_kQ_{k+1}\cdots Q_{k_{j-1}}(1-Q_j)\rangle|\leq\|1-Q_j\|_2\cdot\|{\rm Proj}_{b_j}H\|_2<CP^{-1/15}.
\end{equation*}
Summing up all the $k+1\leq j\leq k_0$ and $0\leq k\leq k_0$ using the triangle inequality again, we therefore can conclude 
\begin{equation}
\label{second-term}
    \sum_{k=0}^{k_0}\sum_{j=k+1}^{k_0}|\langle H,P_kQ_{k+1}\cdots Q_{j-1}(1-Q_j)\rangle|<CP^{-1/15}k_0^2.
\end{equation}

We conclude the proof of \eqref{technical-estimate-2} by the facts $k_0<\log N$ and $P>(\log N)^{100}$. \end{proof}

\begin{remarkx}
The above argument can be easily generalized with \eqref{technical-estimate-1} replaced by the requirement
\begin{equation}
\label{technical-estimate-3}
    \Big|\Big\langle\sum_{m\in A}\sum_{r\in\Lambda}e^{irmx},\Phi(x)\Big\rangle\Big|\geq c\log N.
\end{equation}
Here $\Lambda\subset\ZN^{>0}$, and the constant $c$ only depends on the size of $\Lambda$.
\end{remarkx}

As an application of Lemma \ref{technical-lemma}, we have the following corollary:
\begin{corollary}\label{cor: key}
Let $A$ be a finite subset of $\ZN^{>0}$ with $|A|=N$ and let $P>(\log N)^{100}$. Recall that $\cn$ is the set of natural numbers that only contains prime factors at least $P$. Assume $|a_n|\leq 1$. Then for any $r_0\in\ZN^{>0}$, $\Gamma\subset\ZZ$ with $|\Gamma|\leq \log N$, we have
\begin{equation*}
    \Bigg\|\sum_{m\in A}\Big(e^{imx}+e^{ir_0mx}\Big)+\sum_{\substack{n\in\cn, m\in A}}\Big(\sum_{\beta\in\Gamma}\frac{a_n}{n}e^{i\beta mnx}\Big)\Bigg\|_1\geq c\log N.
\end{equation*}
\end{corollary}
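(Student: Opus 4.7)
The plan is to deduce this corollary from Lemma \ref{technical-lemma} by duality. Recall that for any trigonometric polynomial $f$ and any bounded test function $\Phi$,
\[
\|f\|_1 \;\geq\; \frac{|\langle f,\Phi\rangle|}{\|\Phi\|_\infty}.
\]
So I take $\Phi$ to be exactly the test function produced by Lemma \ref{technical-lemma} for the given set $A$ and the given $r_0$; it satisfies $\|\Phi\|_\infty<10$ and
\[
\Big|\Big\langle\sum_{m\in A}\bigl(e^{imx}+e^{ir_0 mx}\bigr),\,\Phi\Big\rangle\Big|\geq c\log N.
\]

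Next I bound the contribution of the multiplicative ``error'' piece. For each fixed $\beta\in\Gamma$ the second estimate of Lemma \ref{technical-lemma}, applied with the coefficients $a_n$ as given, yields
\[
\Big|\Big\langle\sum_{n\in\cn,\,m\in A}\frac{a_n}{n}e^{i\beta mnx},\,\Phi\Big\rangle\Big|\leq C(\log N)^{-2}.
\]
Summing this bound over $\beta\in\Gamma$ and using $|\Gamma|\leq \log N$, the triangle inequality gives
\[
\Big|\Big\langle\sum_{\beta\in\Gamma}\sum_{n\in\cn,\,m\in A}\frac{a_n}{n}e^{i\beta mnx},\,\Phi\Big\rangle\Big|\leq C(\log N)^{-1}.
\]

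Combining the two displays via the triangle inequality, the full pairing of $\Phi$ with the function inside the $L^1$-norm is at least $c\log N - C(\log N)^{-1} \geq (c/2)\log N$ for $N$ sufficiently large. Dividing by $\|\Phi\|_\infty<10$ produces the claimed lower bound (with a slightly smaller constant, which can be absorbed into $c$). Essentially nothing is delicate here — all the real work has already been done inside Lemma \ref{technical-lemma}; the main obstacle would only be making sure the test function $\Phi$ produced there is truly independent of $\beta$, so that the single $\Phi$ can handle the whole sum over $\Gamma$ simultaneously. This is exactly what the lemma delivers, because $\Phi$ is built from the $P_k,Q_k$ depending only on $A$ and $r_0$, while the bound $C(\log N)^{-2}$ holds uniformly in $\beta$.
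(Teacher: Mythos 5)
Your proposal is correct and is essentially identical to the paper's own proof: both pair the function with the test function $\Phi$ from Lemma~\ref{technical-lemma}, use the triangle inequality to subtract the $|\Gamma|\leq\log N$ error terms each bounded by $C(\log N)^{-2}$, and divide by $\|\Phi\|_\infty<10$. Your remark that the single $\Phi$ works uniformly in $\beta$ is exactly the point the lemma is designed to deliver.
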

\begin{proof} We apply Lemma \ref{technical-lemma} to obtain a function $\Phi(x)$ satisfying \eqref{technical-estimate-1} and \eqref{technical-estimate-2}. Then we can conclude the corollary as
\begin{eqnarray}
\nonumber
    &&\Bigg\|\sum_{m\in A}\Big(e^{imx}+e^{ir_0mx}\Big)+\sum_{\substack{n\in\cn, m\in A}}\Big(\sum_{\beta\in\Gamma}\frac{a_n}{n}e^{i\beta mnx}\Big)\Bigg\|_1\|\Phi\|_\infty\\ \nonumber
    &&\geq \Big|\Big\langle\sum_{m\in A}(e^{imx}+e^{ir_0mx}),\Phi(x)\Big\rangle\Big|-\sum_{\beta\in\Gamma}\Big|\Big\langle\sum_{\substack{n\in\cn, m\in A}}\frac{a_n}{n} e^{i\beta mnx}, \Phi(x)\Big\rangle\Big|\\ \nonumber
    &&> c\log N.
\end{eqnarray}
\end{proof}

\section{$(k,\l)$-sum-free open sets in the torus}

In this section, we use $\mu_H$ as the Haar probability measure on $\TT$. 
\begin{proposition}\label{prop:T1}
Let $A\subseteq \TT$ be a $(k,\l)$-sum-free open set. Then $\mu_H(A)\leq\frac{1}{k+\ell}$.
\end{proposition}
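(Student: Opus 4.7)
The plan is to invoke Kneser's inequality for sumsets in the torus, together with the disjointness of $kA$ and $\ell A$ forced by the $(k,\ell)$-sum-free hypothesis.

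First, I would exploit that $\TT$ is connected, so its only compact open subgroup is $\TT$ itself. Kneser's theorem on locally compact abelian groups then specializes in $\TT$ to the following clean form: for any open sets $U, V \subseteq \TT$, whenever $U+V \neq \TT$ one has $\mu_H(U+V) \geq \mu_H(U) + \mu_H(V)$.

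Second, I would check that $kA \neq \TT$ and $\ell A \neq \TT$ (the case $A = \varnothing$ being trivial). Indeed, if $kA = \TT$ then $\ell A \subseteq \TT = kA$, which contradicts $kA \cap \ell A = \varnothing$ since $\ell A$ is nonempty. The argument for $\ell A$ is symmetric.

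Third, I would iterate Kneser's inequality to obtain $\mu_H(kA) \geq k\,\mu_H(A)$ and $\mu_H(\ell A) \geq \ell\,\mu_H(A)$. The iteration is legitimate because all intermediate sumsets are proper: for any $1 \leq j \leq k-1$, picking any $y \in (k-j)A$ (nonempty) gives $jA + y \subseteq kA \neq \TT$, and translation preserves the property of being all of $\TT$, so $jA \neq \TT$. Hence at each step $\mu_H((j+1)A) = \mu_H(jA + A) \geq \mu_H(jA) + \mu_H(A)$. Combining $k-1$ such inequalities yields the desired bound on $\mu_H(kA)$; the bound on $\mu_H(\ell A)$ is analogous. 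Finally, since $kA$ and $\ell A$ are disjoint in $\TT$, we have $\mu_H(kA) + \mu_H(\ell A) \leq 1$, which combined with the Kneser lower bounds gives $(k+\ell)\mu_H(A) \leq 1$.

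The only delicate point is ensuring that Kneser's inequality can be applied without degenerating, i.e.\ that the intermediate sumsets $jA$ remain proper subsets of $\TT$; this is handled by the translation observation above. Everything else is a direct measure count, so I do not expect a serious obstacle beyond correctly citing the torus version of Kneser.
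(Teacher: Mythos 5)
Your proposal is correct and follows essentially the same route as the paper: disjointness of $kA$ and $\ell A$ gives $\mu_H(kA)+\mu_H(\ell A)\leq 1$, and Kneser's inequality on $\TT$ gives $\mu_H(kA)+\mu_H(\ell A)\geq (k+\ell)\mu_H(A)$. Your extra care in verifying that the intermediate sumsets are proper (so the torus version of Kneser does not degenerate to the trivial bound $1$) is a detail the paper leaves implicit, but it is the same argument.
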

\begin{proof}
Since $A$ is $(k,\l)$-sum-free, we have $kA\cap \l A=\varnothing$. In particular, $\mu_H(kA)+\mu_H(\l A)\leq 1$. By Kneser's inequality \cite{kenser},
\[
(k+\l)\mu_H(A)\leq \mu_H(kA)+\mu_H(\l A)\leq 1,
\]
which implies that $\mu_H(A)\leq 1/(k+\l)$.
\end{proof}

Next, we construct some largest $(k,\l)$-sum-free open sets in $\TT$. When $k-\l\geq2$, our construction is asymmetric, which will help us get a better lower bound on $\M_{(k,\l)}(N)$. We will discuss this in details in the next section.

\begin{lemma}\label{lem:T}
Let $k,\l$ be two positive integers and $k>\l$.
For every integer $t\in [k-\l]$, set $\Omega_t=\big(\frac{t-1}{k-\l}+\frac{\ell}{k^2-\ell^2},\frac{t-1}{k-\l}+\frac{k}{k^2-\ell^2}\big)$.
Then $\Omega_t$ is $(k,\l)$-sum-free.
\end{lemma}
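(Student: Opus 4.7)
The plan is to reduce the claim to a direct computation with open intervals, exploiting the fact that the shift parameter $\alpha := \frac{t-1}{k-\ell}$ satisfies $(k-\ell)\alpha \in \mathbb{Z}$. Writing $J := \bigl(\tfrac{\ell}{k^2-\ell^2},\tfrac{k}{k^2-\ell^2}\bigr)$, we have $\Omega_t = \alpha + J$. Since the $k$-fold sumset of an open interval $(p,q) \subset \mathbb{R}$ is just $(kp,kq)$, we get explicit formulas
\[
k\Omega_t \;=\; k\alpha + \bigl(\tfrac{k\ell}{k^2-\ell^2},\tfrac{k^2}{k^2-\ell^2}\bigr), \qquad \ell\Omega_t \;=\; \ell\alpha + \bigl(\tfrac{\ell^2}{k^2-\ell^2},\tfrac{k\ell}{k^2-\ell^2}\bigr)
\]
as open intervals of $\mathbb{R}$. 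Both have length strictly less than $1$ (namely $\tfrac{k}{k+\ell}$ and $\tfrac{\ell}{k+\ell}$), so each descends to a single open arc in $\mathbb{T}$.

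The key step is the observation that $k\alpha - \ell\alpha = (k-\ell)\alpha = t-1 \in \mathbb{Z}$, hence $k\alpha \equiv \ell\alpha \pmod 1$. Subtracting $\ell\alpha$ from both sumsets therefore does not change the intersection picture on $\mathbb{T}$, and the problem reduces to showing that the two explicit intervals
\[
I_1 \;:=\; \bigl(\tfrac{\ell^2}{k^2-\ell^2},\tfrac{k\ell}{k^2-\ell^2}\bigr) \qquad\text{and}\qquad I_2 \;:=\; \bigl(\tfrac{k\ell}{k^2-\ell^2},\tfrac{k^2}{k^2-\ell^2}\bigr)
\]
project to disjoint arcs in $\mathbb{T}$.

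Finally, I would verify this disjointness by a one-line length count: $I_1$ and $I_2$ share the endpoint $\tfrac{k\ell}{k^2-\ell^2}$ and their combined length is $\tfrac{k^2-\ell^2}{k^2-\ell^2} = 1$. Thus $I_1 \cup \{b\} \cup I_2$ is literally the open interval $(a,a+1)$ (where $a,b$ are the left endpoints of $I_1,I_2$), which wraps onto $\mathbb{T}$ as $\mathbb{T} \setminus \{a \bmod 1,\, b \bmod 1\}$, with $I_1$ and $I_2$ mapping to two genuinely disjoint open arcs. Hence $k\Omega_t \cap \ell\Omega_t = \varnothing$ in $\mathbb{T}$, as required.

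There is no real obstacle here beyond bookkeeping: the endpoints $\tfrac{\ell}{k^2-\ell^2}$ and $\tfrac{k}{k^2-\ell^2}$ are precisely chosen so that $kJ$ and $\ell J$ are adjacent with combined length $1$, and the shift $\tfrac{t-1}{k-\ell}$ is precisely chosen so that $k\alpha$ and $\ell\alpha$ coincide modulo $1$. This also transparently explains why $\Omega_t$ attains the extremal density $\tfrac{1}{k+\ell}$ from Proposition~\ref{prop:T1}: the two sumsets exactly tile $\mathbb{T}$.
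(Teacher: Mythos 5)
Your proof is correct and complete; the paper in fact omits the proof of this lemma entirely (stating only that it is ``easy to verify''), so there is no official argument to compare against. Your computation --- reducing modulo $1$ via $(k-\ell)\alpha = t-1 \in \mathbb{Z}$ and observing that $\ell J$ and $kJ$ are adjacent open intervals of combined length exactly $1$, hence project to disjoint arcs of $\mathbb{T}$ --- is precisely the intended verification, and it correctly handles the case where the intervals wrap around the circle.
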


Lemma~\ref{lem:T} is easy to verify, and we omit the details here.
When $k=\l+1$, the following observation shows that all the possible $(k,\l)$-sum-free open sets with maximum measure are symmetric. Thus one cannot apply the method used in the next section to improve the lower bound for the cases $k=\l+1$.

\begin{lemma}\label{lem: k=l+1}
Let $k=\l+1$. Suppose $A\subseteq \TT$ is a maximum $(k,\l)$-sum-free open set. Then $A$ is symmetric.
\end{lemma}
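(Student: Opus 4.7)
The plan is to combine the equality case of Kneser's inequality with a direct computation specific to $k=\ell+1$. Since $A$ is a maximum $(k,\ell)$-sum-free open set with $k+\ell=2\ell+1$, Proposition~\ref{prop:T1} gives $\mu_H(A)=\tfrac{1}{2\ell+1}$ and forces equality throughout the chain $(k+\ell)\mu_H(A)\le\mu_H(kA)+\mu_H(\ell A)\le1$. Hence
\begin{equation*}
\mu_H(kA)=\tfrac{\ell+1}{2\ell+1},\qquad \mu_H(\ell A)=\tfrac{\ell}{2\ell+1},\qquad kA\cup\ell A=\TT \text{ modulo a null set.}
\end{equation*}

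Next, the telescoped relation $\mu_H(iA)=i\,\mu_H(A)$ for $i=1,\dots,k$ exhibits equality in Kneser's inequality at every step. Since the only proper closed subgroups of $\TT$ have Haar measure zero, Kemperman's inverse theorem forces $A$ to coincide up to a null set with an open arc; for the purposes of computing $kA$ and $\ell A$ we may take $A=(\alpha,\alpha+\tfrac{1}{2\ell+1})$ for some $\alpha\in\TT$. (Should a non-trivial finite subgroup stabilize $A$, the argument descends to the quotient torus and yields the same conclusion upon pulling back.)

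With $A$ an open arc of length $\tfrac{1}{2\ell+1}$, the sumsets $kA$ and $\ell A$ are open arcs of lengths $\tfrac{\ell+1}{2\ell+1}$ and $\tfrac{\ell}{2\ell+1}$ starting at $k\alpha$ and $\ell\alpha$ respectively. Their disjointness together with the tiling condition $kA\cup\ell A=\TT$ forces the right endpoint of one to meet the left endpoint of the other, giving
\begin{equation*}
k\alpha\equiv\ell\alpha+\tfrac{\ell}{2\ell+1}\pmod 1 \quad\text{or}\quad \ell\alpha\equiv k\alpha+\tfrac{\ell+1}{2\ell+1}\pmod 1.
\end{equation*}
Since $k-\ell=1$, both cases collapse to the single solution $\alpha\equiv\tfrac{\ell}{2\ell+1}\pmod 1$, yielding $A=\bigl(\tfrac{\ell}{2\ell+1},\tfrac{\ell+1}{2\ell+1}\bigr)$. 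This arc is centered at $\tfrac{1}{2}$, so $-A=A$ in $\TT$, as required.

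The main obstacle is invoking the equality case of Kneser--Kemperman in $\TT$ rigorously enough to reduce $A$ to an arc; in particular, ruling out (or reducing to the quotient) the degenerate scenario of a non-trivial finite subgroup stabilizing $kA$ or $\ell A$. The constraint $k-\ell=1$ enters decisively in the last step: it is precisely the coefficient that makes both tiling scenarios produce the same $\alpha$. For $k-\ell\ge 2$ the analogous linear equation admits several solutions modulo~$1$, which correspond to the asymmetric examples $\Omega_t$ of Lemma~\ref{lem:T} and explain why the symmetry conclusion is special to $k=\ell+1$.
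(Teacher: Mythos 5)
Your proof breaks at the reduction to the arc case, and the gap is genuine. Equality in Kneser's inequality on $\TT$ does \emph{not} force the extremal set to be an arc up to a null set, and the justification you give --- that proper closed subgroups of $\TT$ are null --- does not rule out the quasi-periodic configurations in Kemperman's classification. Concretely, for $(k,\ell)=(2,1)$ take $H=\{0,1/2\}$ and $A=(1/6,1/3)\cup(2/3,5/6)=B+H$ with $B=(1/6,1/3)$: then $A$ is open with $\mu_H(A)=1/3$, one checks $2A=(1/3,2/3)\cup(5/6,7/6)$ is disjoint from $A$, and $\mu_H(2A)=2\mu_H(A)$, so $A$ is a maximum sum-free open set achieving equality in Kneser yet is not an arc. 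Hence the step ``for the purposes of computing $kA$ and $\ell A$ we may take $A=(\alpha,\alpha+\frac{1}{2\ell+1})$'' is unavailable. Your parenthetical about descending to a quotient torus is the right instinct for such examples, but it is only asserted, not proved; carrying it out requires invoking and case-checking the full Kemperman critical-pair theorem in $\TT$ (to know that every extremal configuration is either an arc or a lift of an arc from a quotient by a finite subgroup), which is far heavier than the lemma itself. The endgame computation in the arc case is correct, but it only covers part of the extremal sets.

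The paper's proof sidesteps all structure theory. Since $k=\ell+1$, the condition $kA\cap\ell A=\varnothing$ is equivalent to $A\cap(\ell A-\ell A)=\varnothing$, i.e.\ $A\subseteq\TT\setminus(\ell A-\ell A)$, and the set $\ell A-\ell A$ is symmetric \emph{by construction}. Kneser then gives $\mu_H\big(\TT\setminus(\ell A-\ell A)\big)\le 1-2\ell\mu_H(A)=\frac{1}{2\ell+1}=\mu_H(A)$, so $A$ coincides with the symmetric set $\TT\setminus(\ell A-\ell A)$ (up to a null set, which forces actual equality of the open set $A$ with the open set $-A$). No inverse theorem and no classification of extremal sets is needed. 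If you wish to keep your route, you must either prove the symmetry first anyway or supply the full Kemperman case analysis together with the pull-back argument for each quasi-periodic case.
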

\begin{proof}
Since $k=\l+1$, $A$ is $(k,\l)$-sum-free implies that $(\l A-\l A)\cap A=\varnothing$. Hence $A\subseteq \TT\setminus (\l A-\l A)$. By Kneser's inequality, 
\[
\mu_H(\TT\setminus (\l A-\l A))\leq 1-2\l\mu_H(A).
\]
By Proposition~\ref{prop:T1}, $\mu_H(A)=\frac{1}{2\l+1}$. Thus $A=\TT\setminus(\l A-\l A)$, and this implies that $A$ is symmetric.
\end{proof}

Using the argument by Erd\H os \cite{Erdos65}, Lemma~\ref{lem:T} is able to give us the following lower bound on the maximum $(k,\l)$-sum-free subsets of any set of $N$ integers, which proves Theorem~\ref{thm:one} (i).
\begin{proposition}
Let $k,\l$ be positive integers and $k>\l$. Then for every $A\subseteq \NN^{>0}$ of size $N$, $A$ contains a $(k,\l)$-sum-free subsets of size at least $\frac{1}{k+\ell}N$.
\end{proposition}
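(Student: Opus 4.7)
The plan is to adapt Erd\H{o}s's probabilistic argument using the $(k,\ell)$-sum-free open set $\Omega_1$ from Lemma~\ref{lem:T} as a ``template'' in the torus. Specifically, let $\Omega = \Omega_1 = \bigl(\tfrac{\ell}{k^2-\ell^2},\tfrac{k}{k^2-\ell^2}\bigr)$. A direct computation gives $\mu_H(\Omega) = \tfrac{k-\ell}{k^2-\ell^2} = \tfrac{1}{k+\ell}$, and by Lemma~\ref{lem:T} the set $\Omega$ is $(k,\ell)$-sum-free in $\TT$; in particular $k\Omega \cap \ell\Omega = \varnothing$.

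First, I would sample $x \in \TT$ uniformly with respect to $\mu_H$ and define the random subset
\[
B_x = \{\, a \in A : xa \bmod 1 \in \Omega\,\} \subseteq A.
\]
The key structural step is to verify that $B_x$ is $(k,\ell)$-sum-free inside $\NN^{>0}$. Suppose for contradiction $a_1,\dots,a_k,b_1,\dots,b_\ell \in B_x$ satisfy $\sum_{i=1}^k a_i = \sum_{j=1}^\ell b_j$ in $\ZZ$. Multiplying by $x$ and reducing mod $1$ yields $\sum_{i=1}^k (xa_i) = \sum_{j=1}^\ell (xb_j)$ in $\TT$. Since each $xa_i, xb_j \in \Omega$ by definition of $B_x$, the left-hand side lies in $k\Omega$ and the right-hand side in $\ell\Omega$, contradicting $k\Omega \cap \ell\Omega = \varnothing$.

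Next, I compute the expected size of $B_x$. For each fixed $a \in A \subseteq \NN^{>0}$, the map $x \mapsto xa \bmod 1$ is a measure-preserving endomorphism of $(\TT,\mu_H)$ (because $a \neq 0$), so
\[
\Pr_{x}\bigl[\,xa \bmod 1 \in \Omega\,\bigr] = \mu_H(\Omega) = \frac{1}{k+\ell}.
\]
Summing over $a \in A$ and using linearity of expectation gives $\mathbb{E}[|B_x|] = \tfrac{N}{k+\ell}$. Consequently, there exists some $x_0 \in \TT$ with $|B_{x_0}| \geq \tfrac{N}{k+\ell}$, and by the previous paragraph $B_{x_0}$ is a $(k,\ell)$-sum-free subset of $A$ of the desired size.

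There is no real obstacle here: the argument is a clean generalization of Erd\H{o}s's original random-dilation trick, with Lemma~\ref{lem:T} supplying the correct template in $\TT$ to replace the interval $(1/3,2/3)$ used in the sum-free case. The only points requiring mild care are (a) recording that $xa \bmod 1$ is uniform on $\TT$ for any nonzero integer $a$, so the probabilities multiply out to $\mu_H(\Omega)$ exactly, and (b) noting that the boundary of $\Omega$ has measure zero, so replacing the open set by its closure would not change the computation, even though the $(k,\ell)$-sum-free hypothesis $k\Omega \cap \ell\Omega = \varnothing$ must be applied to $\Omega$ itself as stated.
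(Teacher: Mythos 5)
Your proof is correct and follows essentially the same route as the paper: both use Erd\H{o}s's random-dilation argument, averaging the indicator of the $(k,\ell)$-sum-free template set $\Omega_t$ from Lemma~\ref{lem:T} over dilates $x\mapsto ax$ and concluding by pigeonhole. Your write-up is in fact slightly more complete, since you explicitly verify that the pullback set $B_x$ is $(k,\ell)$-sum-free in $\ZZ$, a step the paper leaves implicit.
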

\begin{proof}
Let $\Omega_t$ be as in Lemma~\ref{lem:T}, and let $\e_\Omega$ be the characteristic function of $\Omega$ in $\TT$. Thus by Fubini's Theorem,
\[
\int_\TT\sum_{n\in A}\e_\Omega(nx)d\mu_H(x)=\sum_{n\in A}\int_\TT\e_\Omega(nx)d\mu_H(x)=\frac{N}{k+\ell}.
\]
Therefore, by Pigeonhole principle, there exists $x\in\TT$ such that $$|\{n\in A\mid nx\in \Omega\}|\geq\frac{N}{k+\ell},$$
finishes the proof.
\end{proof}

\section{Lower Bounds}

Let $k,\l$ be two positive integers with $k-\l\geq2$. Let $I=\{1,\dots,k-\l\}$ be the index set. Set
\[
\Omega_t=\Big(\frac{t-1}{k-\l}+\frac{\ell}{k^2-\ell^2},\frac{t-1}{k-\l}+\frac{k}{k^2-\ell^2}\Big),
\]
for every $t\in I$.
Let $\ee$ be the indicator function of $\Omega_t$. Given $A\subseteq \NN^{>0}$ of size $N$, let $\M(A)$ be the size of the maximum $(k,\l)$-sum-free subset of $A$. We have
\begin{align}\label{eq:MA}
    \M(A)\geq\max_{x\in\TT}\sum_{n\in A}\ee(nx), 
\end{align}
since $\Omega_t$ is $(k,\l)$-sum-free for every $t$. Then
\begin{align}\label{eq:main}
  &\max_{x\in\TT}\sum_{n\in A}\e_{\Omega_t}(nx)=\frac{N}{k+\l}+\max_{x\in\TT}\sum_{n\in A}\Big(\e_{\Omega_t}-\frac{1}{k+\l}\Big)(nx),
\end{align}
for every $t\in I$. We introduce a balanced function $f_t:\TT\to\CC$ defined by $f_t=\e_{\Omega_t}-\frac{1}{k+\l}$. By orthogonality of characters we have
\[
 \widehat{f_t}(n)=
 \begin{cases}
 0\quad & \text{ if } n=0,\\
 \widehat{\e_{\Omega_t}}(n) & \text{ else}.
 \end{cases}
\]

By Fourier inversion, when $n>0$,
\begin{align*}
    \widehat{f_t}(n)&=\int_{\TT}\ee(x)e(-nx)d\mu(x)\\
    &=\frac{1}{2\pi in}\Big(-e\big(-\frac{(t-1)n}{k-\l}-\frac{nk}{k^2-\ell^2}\big)+e\big(-\frac{(t-1)n}{k-\l}-\frac{n\l}{k^2-\l^2}\big)\Big).
\end{align*}
Simplify $\wh{f}_t(n)$ as
\begin{align*}
   \widehat{f_t}(n)&=\frac{1}{2\pi n} e\Big(\frac{(2t-1)n}{2(k-\l)}\Big)\Big(\sin(\frac{2k n\pi}{k^2-\l^2}-\frac{\pi n}{k-\l})-\sin(\frac{2\l n\pi}{k^2-\l^2}-\frac{\pi n}{k-\l})\Big)\\
   &=\frac{1}{\pi n}e\Big(\frac{(2t-1)n}{2(k-\l)}\Big)\sin\Big(\frac{n\pi}{k+\l}\Big).
\end{align*}
Hence, for every $t\in I$ we have
\begin{align*}
    &f_t(x)=\sum_{n\neq0}\widehat{f_1}(n)e(nx)=\sum_{n\neq0}\frac{1}{\pi n}e\Big(\frac{(2t-1)n}{2(k-\l)}\Big)\sin\Big(\frac{n\pi}{k+\l}\Big)e(nx).
\end{align*}
Let $F(x):=\sum_{t\in I}f_t(x)$. The sine terms cancel when summing up $t$ as
\[
\sum_{t=1}^{k-\l}\sin\Big(\frac{(2t-1)n\pi}{k-\l}\Big)=0,
\]
so we get
\begin{align*}
    F(x)&=\frac{1}{\pi}\sum_{n\geq1}\frac{1}{n}\sin\Big(\frac{n\pi}{k+\l}\Big)\alpha(n)\big(e(nx)+e(-nx)\big),
\end{align*}
where $\al(n):\ZZ\to\ZC$ is defined by 
\[
    \alpha(n)=\sum_{t\in I}\cos\Big(\frac{(2t-1)n\pi}{k-\l}\Big)=
    \begin{cases}
    0 \quad&\text{ when } (k-\l)\nmid n,\\
    (-1)^{s}(k-\l) &\text{ when } n=(k-\l)s.
    \end{cases}
\]
Therefore, we have
\begin{equation}\label{eq:Fx}
F(x)=\frac{2}{\pi}\sum_{n\geq1}\frac{(-1)^{n}}{n}\sin\Big(\frac{(k-\l)n\pi}{k+\l}\Big)\cos(2\pi(k-\l)n x).
\end{equation}

In the rest of the section, we let $k-\l$ be an even integer. Set
\[
I_1=\{1,\dots,(k-\l)/2\}, \quad I_2=\{(k-\l)/2+1,\dots,k-\l\}.
\]
We define a ($\frac{k-\l}{2}\times \frac{k-\l}{2}$)-matrix $D=(d_{ij})$, such that
\[
d_{ij}=\sin \Big(\frac{i(2j-1)\pi}{k-\l}\Big)
\]
for every $i,j\in I_1$.

Let $\bm{\lambda}=(\lambda_1,\dots,\lambda_{(k-\l)/2})$ be a vector. By Lemma~\ref{lem:matrix}, there is $\bm{\lambda}\in\RR^{(k-\l)/2}$, with $|\lambda_i|\leq k^k$, such that $D\bm{\lambda}^{T}=(0,\dots,0,1)^T$. Fix this $\bm{\lambda}$, and let
\[
G(x)=\sum_{j\in I_1}\lambda_j f_j(x)-\sum_{t\in I_2}\lambda_{k-\l+1-t} f_t(x).
\]
Observe that for any $n\in\ZN^{>0}$,
\[
\sum_{t\in I_1}\lambda_t\cos\Big(\frac{(2t-1)n\pi}{k-\l}\Big)=\sum_{t\in I_2}\lambda_{k-\l+1-t}\cos\Big(\frac{(2t-1)n\pi}{k-\l}\Big).
\]
As a result, we have
\[
G(x)=\frac{1}{\pi}\sum_{n\geq1}\frac{1}{n}\sin\Big(\frac{n\pi}{k+\l}\Big)\beta(n)\sin(2\pi nx),
\]
where
\begin{align*}
     \beta(n)&=\sum_{j\in I_2}\lambda_{k-j}\sin\Big(\frac{(2j-1)n\pi}{k-\l}\Big)-\sum_{t\in I_1}\lambda_t\sin\Big(\frac{(2t-1)n\pi}{k-\l}\Big)\\
     &=
    \begin{cases}
    0 \quad&\text{ when } n\neq\frac{k-\l}{2}(2s-1),\\
    2(-1)^{s+1} &\text{ when } n=\frac{k-\l}{2}(2s-1).
    \end{cases}
\end{align*}
Therefore, we get
\begin{equation}\label{eq:Gx}
G(x)=\frac{2}{\pi(k-\l)}\sum_{n\geq1}\frac{\gamma(n)}{n}\sin\Big(\frac{(k-\l)n\pi}{2(k+\l)}\Big)\sin(\pi(k-\l)nx),
\end{equation}
where $\gamma(n)=\beta((k-\l)n/2).$ We now split the proof into two cases.

\subsection{Proof of Theorem~\ref{thm:one} (ii)} Now we have $k=5\l$. On one hand,
by equation (\ref{eq:Fx}), we have
\[
F(x)=-\frac{\sqrt3}{\pi}\sum_{n\geq1}\frac{\psi(n)}{n}\cos(8\pi\l nx),
\]
where
\[
\psi(n)=
\begin{cases}
1&\text{ when }n\equiv 1,2\pmod6,\\
-1&\text{ when }n\equiv 4,5\pmod6,\\
0&\text{ otherwise}.
\end{cases}
\]
Note that $\psi(n)$ is \emph{not} a multiplicative function. Using the M\"obius function $\mu$, we define a weighted M\"obius function $\eta$ that
\[
\eta(n)=
\begin{cases}
\mu(n)&\text{ when } n\equiv 1,4\pmod 6,\\
-\mu(n)&\text{ when } n\equiv 2,5\pmod 6,\\
0&\text{ otherwise. }
\end{cases}
\]
Set $P\asymp(\log N)^{100}$ a prime. Let $\MM_1$ be the set of square-free integers such that for every $n\in\MM_1$ we have $3\nmid n$, all the prime factors of $n$ are at most $P$, and we further require that $1\in\MM_1$. Then, we have
\begin{align*}
    \sum_{m\in\MM_1}\frac{\eta(m)}{m}F(mx)=-\frac{\sqrt3}{\pi}\sum_{n\geq1}\frac{1}{n}\cos(8\pi\l nx)\sum_{m\in\MM_1, m\mid n}\eta(m)\psi\Big(\frac{n}{m}\Big),
\end{align*}
where
\begin{align*}
    \sum_{m\in\MM_1, m\mid n}\eta(m)\psi\Big(\frac{n}{m}\Big)=\sum_{\substack{m\in\MM_1, m\mid n\\ 2\nmid m}}\eta(m)\psi\Big(\frac{n}{m}\Big)+\sum_{\substack{m\in\MM_1, m\mid n\\ 2\mid m}}\eta(m)\psi\Big(\frac{n}{m}\Big)=: I_1+I_2.
\end{align*}
Note that $I_1+I_2=0$ when $3\mid n$. Also, recall that $\N$ is the set defined in Section 3 that contains integers only having prime factors at least $P$. It follows that for any odd integer $n\not\in\N$ with $3\nmid n$, 
\begin{equation}
    \sum_{\substack{m\in\MM_1, m\mid n}}\eta(m)\psi\Big(\frac{n}{m}\Big)=\psi(n) \sum_{\substack{m\in\MM_1, m\mid n}}\mu(m)=0.
\end{equation}
As a consequence, when $n$ is an odd integer with $n\not\in\N$, we have $I_1=I_2=0$, unless $n=1$; When $n$ is an even integer with $n/2\not\in \N$ and $n\neq2^d$, we have $I_1(n)=0$ and $I_2(n)=I_1(n/2)=0$. When $n=2^d$, we have
\[
I_1+I_2=\psi(2^d)+\psi(2^{d-1})=\begin{cases}
2 &\text{ when } d=1,\\
0 &\text{ when } d>1.
\end{cases}
\]
Therefore,
\begin{eqnarray}
\nonumber
    \sum_{m\in\MM_1}\frac{\eta(m)}{m}F\Big(\frac{mx}{4}\Big) \!\!\!&=&\!\!\! -\frac{\sqrt3}{\pi}\Bigg(\cos(2\pi\l x)+\cos(4\pi\l x)+\\ \nonumber
    &&\sum_{n\in\N}\frac{\psi(n)}{n}\cos(2\pi\l nx)+\sum_{n\in 2\cdot\N}\frac{\psi(n)+\psi(n/2)}{n}\cos(2\pi\l nx)\Bigg),    
\end{eqnarray}

\vspace{3mm}

On the other hand, by equation (\ref{eq:Gx}), we get
\[
G(x)=\frac{\sqrt 3}{\pi}\sum_{n\geq1}\frac{\pi(n)}{n}\sin(4\pi\l nx),
\]
where
\[
\pi(n)=
\begin{cases}
1 &\text{ when } n\equiv \pm 1\pmod{12},\\
-1 &\text{ when } n\equiv \pm 5\pmod{12},\\
0 &\text{ otherwise. }
\end{cases}
\]
Note that $\pi(n)$ is a multiplicative function. Let $\MM$ be the set of square-free integers such that for every $n\in\MM$, all the prime factors of $n$ are at most $P$, and $1\in\MM$. Thus by the basic properties of the M\"obius function, we have
\begin{align*}
    \sum_{m\in\MM}\frac{\mu(m)\pi(m)}{m}G(mx)&=\frac{\sqrt 3}{\pi(k-\l)}\sum_{n\geq1}\frac{\pi(n)}{n}\sin(4\pi\l nx)\sum_{m\in\MM, m\mid n}\mu(m)\\
    &=\frac{\sqrt 3}{\pi(k-\l)}\Bigg(\sin(4\pi\l x)+\sum_{n\in\N}\frac{\pi(n)}{n}\sin(4\pi\l nx)\Bigg).
\end{align*}

Now we are going to apply Corollary~\ref{cor: key} to obtain a lower bound of $\M(A)$. Let
\[
\pi'(n)=\begin{cases}
\pi(n)&\text{ when } 2\nmid n,\\
\pi(n/2)&\text{ when } 2\mid n\text{ and } 4\nmid n,\\
0&\text{ otherwise,}
\end{cases}
\]
and let
\[
\psi'(n)=\begin{cases}
\psi(n)&\text{ when } 2\nmid n,\\
\psi(n)+\psi(n/2)&\text{ when } 2\mid n\text{ and } 4\nmid n,\\
0&\text{ otherwise.}
\end{cases}
\]
We have
\vspace{3mm}
\begin{align*}
&\,\log N\\
\ll&\,\Bigg\Vert \sum_{m\in A}\big(\cos(2\pi\l mx)+\cos(4\pi\l mx)+i\sin(2\pi\l mx)+i\sin(4\pi\l mx)\big)\\
&\,+\sum_{m\in A}\sum_{n\in\N\cup(2\cdot\N)}\frac{1}{n}\Big(\psi'(n)\cos(2\pi\l mnx)+i\pi'(n)\sin(2\pi\l mnx)\Big)\Bigg\Vert_{L^1(\TT)}\\
\ll&\,\Bigg\Vert \sum_{t\in\MM_1}\frac{\eta(t)}{t}\sum_{m\in A}F\Big(\frac{tmx}{4}\Big)\Bigg\Vert_{L^1(\TT)}+\Bigg\Vert \sum_{t\in\MM}\frac{\mu(t)\pi(t)}{t}\sum_{m\in A}G\Big(\frac{tmx}{2}\Big)\Bigg\Vert_{L^1(\TT)}\\
&\,+\Bigg\Vert \sum_{t\in\MM}\frac{\mu(t)\pi(t)}{t}\sum_{m\in A}G(tmx)\Bigg\Vert_{L^1(\TT)}\\ 
\ll&\,\sum_{t\in\MM_1}\bigg|\frac{\eta(t)}{t}\bigg|\bigg\Vert \sum_{m\in A}F(mx)\bigg\Vert_{L^1(\TT)}+2\sum_{t\in\MM}\bigg|\frac{\mu(t)\pi(t)}{t}\bigg| \Bigg\Vert\sum_{m\in A}G(mx)\bigg\Vert_{L^1(\TT)}\\
\ll&\,\prod_{p\leq P}\Big(1+\frac{1}{p}\Big)\bigg(\sum_{t=1}^{k-\l}\bigg\Vert \sum_{m\in A}f_t(mx)\bigg\Vert_{L^1(\TT)}+2\sum_{t\in I_1\cup I_2}\lambda_t\bigg\Vert \sum_{m\in A}f_t(mx)\bigg\Vert_{L^1(\TT)}\bigg).
\end{align*}
By Mertens' estimates we get
\[
\prod_{p\leq P}\Big(1+\frac{1}{p}\Big)\ll\log P\asymp \log\log N.
\]
Hence there is $t\in I$ such that $\big\Vert \sum_{m\in A}f_t(mx)\big\Vert_{L^1(\TT)}\gg\frac{\log N}{\log\log N}$.

Note that,
\[
\int_{\TT}\sum_{n\in A}f_t(nx)dx=0.
\]
Thus we have
\[
\max_{x\in\TT}\sum_{n\in A}f_t(nx)\geq\frac{1}{2}\bigg\Vert \sum_{n\in A}f_t(nx)\bigg\Vert_{L^1(\TT)}.
\]
Together with (\ref{eq:MA}) and (\ref{eq:main}), we get
\[
\M(A)-\frac{N}{k+\l}\gg\frac{\log N}{\log\log N},
\]
and this proves Theorem~\ref{thm:one} (ii).

\subsection{Proof of Theorem~\ref{thm:one} (iii)} Let $u$ be an even integer, and let $t=u/2$ in this subsection. Consider the following matrix
\begin{equation*}
    X=\left(\begin{array}{cccc}
        \sin(\pi/u) & \sin(3\pi/u) & \cdots & \sin((2t-1)\pi/u) \\
        \sin(2\pi/u) & \sin(6\pi/u) & \cdots & \sin(2(2t-1)\pi/u) \\
        \cdots&\cdots&\cdots&\cdots \\
        \sin (t\pi /u) & \sin (3t\pi/u) & \cdots & \sin (t(2t-1)\pi/u)
    \end{array}\right).
\end{equation*}
By Lemma~\ref{lem:matrix}, there is $\bm{\alpha}\in\RR^{t}$, with $|\alpha_i|\leq t^t$, such that $X\bm{\alpha}^{T}=(-1,\dots,0,0)^T$. 

For each odd integer $v$ ranging from the interval $[1,u)$, define $\mathcal{P}_{v}$ to be an infinite collection of pairs $(k_v,\l_v)$ of positive integers such that $k_v=(u+v)\l_v/(u-v)$. Let $F_{(k,\l)}(x)$ and $G_{(k,\l)}(x)$ be the function constructed in (\ref{eq:Fx}) and (\ref{eq:Gx}) with respect to the pair $(k,\l)$. Note that in the current constructions, for every $(k_1,\l_1), (k_2,\l_2)\in \mathcal{P}_v$,
we have
\[
F_{(k_1,\l_1)} \Big(\frac{x}{k_1-\l_1}\Big)=F_{(k_2,\l_2)} \Big(\frac{x}{k_2-\l_2}\Big),
\]
and we denote the above function by $F_v(x)$ since it only depends on $v$. Similarly, we also have 
\[
(k_1-\l_1)G_{(k_1,\l_1)} \Big(\frac{x}{k_1-\l_1}\Big)=(k_2-\l_2)G_{(k_2,\l_2)} \Big(\frac{x}{k_2-\l_2}\Big),
\]
and we denote the above function by $G_v(x)$.

Let $\bm{F}(x)=\left(F_1(x),F_3(x),\dots F_{2t-1}(x)\right),$ and  we construct
\[
F(x):=\bm{F}(x)\bm{\alpha}^{T}=\frac{2}{\pi}\sum_{n\geq1}\frac{\Phi(n)}{n}\cos(2\pi nx),
\]
where
\[
\Phi(n):=\begin{cases}
1&\text{ when } n\equiv 1,u-1\pmod{2u},\\
-1&\text{ when } n\equiv u+1,-1\pmod{2u},\\
0&\text{ otherwise, }
\end{cases}
\]
since in this case $k_v-\l_v$ is always even for every $(k_v,\l_v)$ in $\mathcal{P}_v$. Note that $\Phi(n)$ is a multiplicative function. Let $\MM$ be the set of square-free integers that only contains prime factors at most $P$ and $1\in\MM$, hence we have
\[
\sum_{m\in\MM}\frac{\Phi(m)\mu(m)}{m}F(mx)=\frac{2}{\pi}\Big(\cos(2\pi x)+\sum_{n\in\N}\frac{\Phi(n)}{n}\cos(2\pi nx)\Big).
\]

Similarly, we consider the matrix
\begin{equation*}
    Y=\left(\begin{array}{cccc}
        \sin(\pi/2u) & \sin(3\pi/2u) & \cdots & \sin((2t-1)\pi/2u) \\
        \sin(3\pi/2u) & \sin(9\pi/2u) & \cdots & \sin(3(2t-1)\pi/2u) \\
        \cdots&\cdots&\cdots&\cdots \\
        \sin ((2t-1)\pi/2u) & \sin (3(2t-1)\pi/2u) & \cdots & \sin ((2t-1)^2\pi/2u)
    \end{array}\right).
\end{equation*}
By Lemma~\ref{lem:matrix}, there is $\bm{\beta}\in\RR^{t}$, with $|\beta_i|\leq t^{t}$, such that $Y\bm{\beta}^{T}=(1,\dots,0,0)^T$. Let $\bm{G}(x)=\left(G_1(x),G_3(x),\dots,G_{2t-1}(x)\right),$ and  we construct
\[
G(x):=\bm{G}(x)\bm{\beta}^{T}=\frac{2}{\pi}\sum_{n\geq1}\frac{\Psi(n)}{n}\sin(2\pi nx),
\]
where
\[
\Psi(n):=\begin{cases}
1&\text{ when } n\equiv \pm1\pmod{4u},\\
-1&\text{ when } n\equiv \pm (2u-1)\pmod{4u},\\
0&\text{ otherwise. }
\end{cases}
\]
We also have $\Psi(n)$ is a multiplicative function. Hence 
\[
\sum_{m\in\MM}\frac{\Psi(m)\mu(m)}{m}G(mx)=\frac{2}{\pi}\Big(\sin(2\pi x)+\sum_{n\in\N}\frac{\Psi(n)}{n}\sin(2\pi nx)\Big).
\]

Finally, we apply Corollary~\ref{cor: key}. Using a similar computation employed in Section 5.1, we obtain that
\[
\max\Big\{\Big\Vert \sum_{m\in A}F(mx)\Big\Vert_{L^1(\TT)}, \Big\Vert \sum_{m\in A}G(mx)\Big\Vert_{L^1(\TT)}\Big\}\gg\frac{\log N}{\log\log N}. 
\]
This implies there is an odd integer $v\in [1,u)$ such that 
\[
\max\Big\{\Big\Vert \sum_{m\in A}F_v(mx)\Big\Vert_{L^1(\TT)}, \Big\Vert \sum_{m\in A}G_v(mx)\Big\Vert_{L^1(\TT)}\Big\}\gg\frac{\log N}{\log\log N}. 
\]
Therefore, from a similar argument we used in Section 5.1, we can conclude that the size of the maximal $(k_v,\l_v)$-sum-free subset of $A$ for every $(k_v,\l_v)\in\mathcal{P}_v$ is at least
\[
\frac{N}{k_v+\l_v}+c\frac{\log N}{\log\log N}
\]
for some positive $c$. This proves Theorem~\ref{thm:one} (iii).


\section{Structure of infinite $(k,\l)$-sum-free sets}
Given $A\subseteq \NN^{>0}$, the \emph{upper density} of $A$ is defined as
\[
\overline{d}(A)=\limsup_{N\to\infty}\frac{|A\cap[N]|}{N}.
\]
We also define the \emph{upper density on multiples} of $A$ by
\[
\widetilde{d}(A)=\limsup_{N\to\infty}\limsup_{n\to\infty}\frac{|A\cap (N!\cdot [n])|}{n}.
\]
In this section, we will prove the following theorem, which will be used in the next section when constructing the upper bound estimate for Theorem 1 (iv).
\begin{theorem}\label{thm:periodic}
Suppose that $A\subseteq\NN^{>0}$, and $A$ is $(k,\l)$-sum-free. Then $\widetilde{d}(A)\leq \frac{1}{k+\l}$.
\end{theorem}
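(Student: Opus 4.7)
The plan is to transfer the problem from $\NN$ to the torus $\TT$ and conclude via Kneser's inequality, in the spirit of Proposition~\ref{prop:T1}. Writing $A_N := \{m\in\NN^{>0} : N!\cdot m\in A\}$, one checks that if $A$ is $(k,\l)$-sum-free in $\NN$ then so is each $A_N$: a relation $\sum_i m_i=\sum_r m'_r$ in $A_N$ scales to $\sum_i N!m_i=\sum_r N!m'_r$ in $A$, contradicting the sum-freeness. By the definition of $\widetilde{d}$, I would then pick sequences $N_j\to\infty$ and $n_j\to\infty$ with $|A_{N_j}\cap[n_j]|/n_j\to \widetilde{d}(A)$.

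Next, I would fix a nonprincipal ultrafilter $\mathscr{U}$ on $\NN$ and, following the preliminaries, form the ultraproduct $F := \prod_{j\to\mathscr{U}}[n_j]$ with its Loeb measure $\mu_L$, and the internal set $B := \prod_{j\to\mathscr{U}}(A_{N_j}\cap[n_j])$, so that $\mu_L(B) = \widetilde{d}(A)$. The standard-part map $\pi(x) := \mathrm{st}(x/n^*)$, with $n^*:=\lim_{j\to\mathscr{U}} n_j$, pushes $F$ to $\TT$ and $\mu_L$ to the Haar measure $\mu_H$. Since $\mu_L|_B\le\mu_L$, the push-forward $\nu:=\pi_*(\mu_L|_B)$ is absolutely continuous with respect to $\mu_H$, so we may write $\nu=f\cdot\mu_H$ with $0\leq f\leq 1$ and $\int_\TT f\, d\mu_H=\widetilde{d}(A)$. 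Set $S:=\{x\in\TT:f(x)>0\}$; then $\mu_H(S)\geq\widetilde{d}(A)$, and the theorem reduces, assuming for contradiction $\widetilde{d}(A)>1/(k+\l)$, to showing $\mu_H(kS\cap\l S)=0$: Kneser's inequality on $\TT$ then gives $(k+\l)\mu_H(S)\leq\mu_H(kS)+\mu_H(\l S)\leq 1$, contradicting $\mu_H(S)>1/(k+\l)$.

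The core task is thus establishing $\mu_H(kS\cap\l S)=0$. Any relation $\sum_{i=1}^k\pi(a_i)=\sum_{r=1}^\l\pi(b_r)$ in $\TT$ pulls back to $\sum_i a_i-\sum_r b_r=tn^*$ for some integer $t\in\{-\l,\dots,k\}$. The case $t=0$ is excluded directly by the $(k,\l)$-sum-freeness of the $A_{N_j}$. For each nonzero ``wrap-around'' value of $t$, a dimension count (fixing $k+\l-1$ of the coordinates in $B^{k+\l}$ and solving for the last) gives at most $O((n^*)^{k+\l-1})$ tuples with $\sum_i a_i-\sum_r b_r=tn^*$, i.e.\ an infinitesimal fraction of $B^{k+\l}$. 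Translating this through $\pi$, applying Fubini, and using that $\nu\ll\mu_H$ should then force the Haar measure of the bad configurations $\{(\vec x,\vec y)\in S^{k+\l}:\sum x_i=\sum y_r\}$ to vanish, which in turn yields $\mu_H(kS\cap\l S)=0$.

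The main obstacle will be making this last reduction rigorous. The crude dimension count only bounds, but does not make vanish, the convolution integral $\int_\TT f^{*k}\cdot f^{*\l}\,d\mu_H$: a fixed nonzero $t$ could in principle contribute an $O(1)$ term to that integral and hence to $\mu_H(kS\cap\l S)$. To rule this out I expect to use the $(k,\l)$-sum-freeness of $A$ itself, not merely of each $A_{N_j}$, together with the divisibility of $N_j!$ as $N_j\to\infty$: wrap-around relations correspond to genuine sum-equations $\sum N_j!a_i=\sum N_j!b_r+tn_jN_j!$ in $A$, and the divisibility of $N_j!$ should, in the spirit of the \L uczak--Schoen structure theorem for dense sum-free sets, force these equations to occur only on a Loeb-null subset of $B^{k+\l}$. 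This is the technical heart of the proof.
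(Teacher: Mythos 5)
Your reduction to Kneser's inequality on $\TT$ does not go through, and the obstacle you flag in your last paragraph is not a technicality but a fatal one: the claim $\mu_H(kS\cap\ell S)=0$ is false. Take $\alpha$ irrational and $A=\{n\in\NN^{>0}:\{n\alpha\}\in(\tfrac{\ell}{k^2-\ell^2},\tfrac{k}{k^2-\ell^2})\}$, the pullback of the set $\Omega_1$ of Lemma~\ref{lem:T}. This $A$ is $(k,\ell)$-sum-free with $\widetilde d(A)=\tfrac1{k+\ell}$, and each $A_N=A/N!$ is a set of the same shape with $\alpha$ replaced by $N!\alpha$. Since the pairs $(m/n_j,\{mN_j!\alpha\})$ equidistribute in $[0,1]^2$ for irrational $\alpha$, the pushforward $\nu=\pi_*(\mu_L|_B)$ has constant density $f\equiv\tfrac1{k+\ell}$ on all of $\TT$; hence $S=\TT$, $\mu_H(S)=1$, and $kS\cap\ell S=\TT$. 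The standard-part map $x\mapsto\mathrm{st}(x/n^*)$ only detects interval-type (archimedean) structure, whereas a dense $(k,\ell)$-sum-free set can live on a Bohr set $\{n:\{n\alpha\}\in\Omega\}$ that is invisible to $\pi$. Relatedly, a torus relation only pulls back to $\sum_i a_i-\sum_r b_r=tn^*+o(n^*)$, and the tuples satisfying such an approximate relation form a positive (not infinitesimal) proportion of $B^{k+\ell}$, so the dimension count does not control the convolution integral even for $t=0$.

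What is needed is the structural route you only gesture at, carried out on $\NN$ itself rather than on $\TT$. The paper proves a $(k,\ell)$-analogue of the \L uczak--Schoen theorem (Lemma~\ref{lem:5.2}): any $(k,\ell)$-sum-free $A$ with $\overline d(A)>\tfrac1{k+\ell}+\varepsilon$ is contained in a \emph{periodic} $(k,\ell)$-sum-free set. The density bound there comes not from Kneser on the torus but from exhibiting $k+\ell$ pairwise disjoint translates of the ``sum-free part'' inside $\NN$ (Claims~\ref{cm:1}--\ref{cm:3}), with Szemer\'edi's theorem (Lemma~\ref{lem:Sz}) supplying the long progression required by Lemma~\ref{lem:5.1}. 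Theorem~\ref{thm:periodic} then follows quickly: if $\widetilde d(A)>\tfrac1{k+\ell}$, some $A/N!$ has upper density exceeding $\tfrac1{k+\ell}+\varepsilon$ and so lies in a periodic $(k,\ell)$-sum-free set, yet $\widetilde d(A)>0$ forces $A/N!$ to contain a multiple of every integer, which no periodic $(k,\ell)$-sum-free set can. Your ultraproduct and Loeb-measure machinery is exactly what the paper uses for Theorem~\ref{thm:folner} (passing from finite sets to an infinite one), but it is not the right tool for this density bound.
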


We break the proof of this theorem into three lemmas. The first lemma says that if a $(k,\l)$-sum-free set $A$ contains a certain long arithmetic progression, then the upper density of $A$ is bounded.

\begin{lemma}\label{lem:5.1}
Let $A\subseteq \NN^{>0}$ be a $(k,\l)$-sum-free set. Let $x,s,d,m$ be positive integers, such that $s\in \l A-(k-1) A$, $x+d\cdot [m]\subseteq A$, and $s$ is in the coset $x+d\cdot\ZZ$. Then
\[
\overline{d}(A)\leq\frac{m+k+\l-2}{(k+\l)m+2(k+\l-2)}.
\]
\end{lemma}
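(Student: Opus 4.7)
My plan is to combine three sources of structural information about $A$: the sum-free condition $kA\cap\ell A=\varnothing$, the long arithmetic progression $P:=x+d\cdot[m]$ contained in $A$, and the element $s$ with its coset condition. The target upper bound has denominator $(k+\ell)m+2(k+\ell-2)$, so I will look for roughly $k+\ell$ essentially disjoint translates of $A$ inside a window of length proportional to $N$.

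First, I would observe that because $P\subseteq A$, we have $A+(k-1)P\subseteq kA$ and $A+(\ell-1)P\subseteq \ell A$, and by the $(k,\ell)$-sum-free condition these two sumsets are disjoint. Cauchy--Davenport in $\mathbb{Z}$ gives, for $A_N:=A\cap[N]$, the lower bounds
\[
|A_N+(k-1)P|\ge |A_N|+(k-1)(m-1),\qquad |A_N+(\ell-1)P|\ge |A_N|+(\ell-1)(m-1).
\]
Second, I would use the hypothesis $s\in\ell A-(k-1)A$: choose $y_0\in\ell A$ and $z_0\in(k-1)A$ with $s+z_0=y_0$. Then for every $a\in A$ we have $a+z_0\in kA$, so $A+z_0\subseteq kA$ and in particular $(A+z_0)\cap(A+(\ell-1)P)=\varnothing$. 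Writing $s=x+dq$ with $q\in\mathbb{Z}$ and tracking cosets modulo $d$ shows that $z_0\equiv y_0-x\pmod d$, which I would use to place $A+z_0$ in a coset distinct from the cosets occupied by $A+(k-1)P$ (this is where the coset condition on $s$ is crucial) and hence obtain an extra translate of $A$ inside $kA$.

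Third, I would carry out the counting. Fix a large $N$ so that $|A_N|\ge(\overline{d}(A)-\varepsilon)N$. Inside a containing interval of length $N+O_d(m)$ we have the disjoint union of $U\cup(A+z_0)\subseteq kA$ (where $U=A+(k-1)P$) and $V:=A+(\ell-1)P\subseteq \ell A$. The AP structure of $(k-1)P$ and $(\ell-1)P$ allows the translates making up $U$ and $V$ to be grouped coset-wise modulo $d$, and summing the Cauchy--Davenport contributions across cosets yields roughly $k$ and $\ell$ essentially disjoint copies of $A_N$ inside $U$ and $V$ respectively, for a total of about $(k+\ell)|A_N|+(k+\ell-2)(m-1)+2$ elements (the final $+2$ coming from the extra translate $A+z_0$ beyond the $k+\ell-2$ already supplied by the AP). Comparing this to the ambient length and letting $N\to\infty$ gives the stated inequality
\[
\overline{d}(A)\le\frac{m+k+\ell-2}{(k+\ell)m+2(k+\ell-2)}.
\]

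The hardest step is the third one: producing genuinely $k+\ell$ near-disjoint copies of $A_N$ from $U\cup(A+z_0)$ and $V$ rather than just two. The translates $A+(k-1)x+dj$ constituting $U$ overlap heavily when $A$ itself has a lot of difference-$d$ structure (which it does, since $P\subseteq A$), so a naive disjointness bound only yields the factor $2$ and gives $\overline{d}(A)\le \tfrac12$. The key is to decompose $A$ by its residues modulo $d$ and apply Cauchy--Davenport within each coset; the coset hypothesis on $s$ then guarantees the contribution from $z_0$ lands in a coset not already filled by $(k-1)P$, producing the final correction term in the numerator. Orchestrating this coset bookkeeping so that the intervals of size $N+O(1)$ are used efficiently, without double-counting or losing the boundary terms, will be the main technical obstacle.
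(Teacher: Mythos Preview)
Your approach has a genuine gap in Step~3, and the ``coset bookkeeping'' you gesture at cannot close it. Cauchy--Davenport applied to $A_N+(k-1)P$ (even coset-by-coset modulo $d$) produces a lower bound of the shape $|A_N|+(k-1)(m-1)$, i.e.\ \emph{one} copy of $|A_N|$ plus an additive constant independent of $N$; it never yields $k$ copies of $|A_N|$. Summing over cosets does not change this: each coset contributes one copy of the piece of $A_N$ it contains, so the total is still a single $|A_N|$. Combined with the analogous bound for $V=A_N+(\ell-1)P$ and the disjointness $kA\cap\ell A=\varnothing$, you recover only $2\overline d(A)\le 1+o(1)$, exactly the ``factor~2'' you already noted. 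The additional translate $A+z_0$ does not help either: you need it disjoint from $A+(k-1)P$ inside $kA$, but both sets meet the same cosets modulo $d$ whenever $A$ does, so the coset hypothesis on $s$ gives no separation there.

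The paper's proof is structurally different. It introduces an auxiliary set $B=\{b\in A:(b+d\cdot[m])\cap A\neq\varnothing\}$ and proves two inequalities: first, using carefully chosen translates of $B$ and $A$ (built from a particular $s_0$ with $(s_0+d\cdot[m])\cap(\ell A-(k-1)A)=\varnothing$), one obtains $(k+\ell-2)\overline d(B)+2\overline d(A)\le 1$; second, using $m$ disjoint translates of $A\setminus B$ together with two translates of $A$, one obtains $(m+2)\overline d(A)-m\overline d(B)\le 1$. Eliminating $\overline d(B)$ between these two gives the stated bound. The essential idea you are missing is this two-step trade-off via $B$: the $(k+\ell)$-fold gain does not come from $k+\ell$ copies of $A$, but from $k+\ell-2$ copies of $B$ in one inequality and $m$ copies of $A\setminus B$ in the other.
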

\begin{proof}
Since $s\in\l A-(k-1)A$ and $A$ is $(k,\l)$-sum-free, we have $s\notin A$. We will only consider $s\leq x$, and the case when $s\geq x+m$ follows from the same proof. 
Since $x+d\cdot [m]\subseteq A$, then $\big(x+d\cdot [m]\big)\cap \big(\l A-(k-1)A\big)=\varnothing.$ Thus, there is $s_0\in x+d\cdot \ZZ$, such that $s_0\in\l A-(k-1)A$, and 
\begin{equation}\label{eq:s0}
\big(s_0+d\cdot [m]\big)\cap \big(\l A-(k-1)A\big)=\varnothing. 
\end{equation}
Let $s_0=\sum_{i=1}^\l a_i-\sum_{j=1}^{k-1}b_j$, where $a_i,b_j\in A$ for every $1\leq i\leq \l$ and $1\leq j\leq k-1$.

Let $B\subseteq A$ such that
\[
B:=\{b\in A\mid \big(b+d\cdot[m]\big)\cap A\neq\varnothing\}.
\]
Set $a_0=b_0=0$. Given integers $1\leq u\leq k-1$ and $2\leq v \leq \l$, let $$\C(u)=B+\sum_{j=1}^{k-u}b_j+(u-1)a_1,\quad\quad \D(v)=B+\sum_{j=0}^{\l-v}a_j+\sum_{i=0}^{v-1}b_i,$$
and $\C(k)=A+(k-1)a_1$, $\D(1)=A+\sum_{j=1}^{\l-1}a_j$.
Let $\mathscr{F}=\{\C(u),\D(v)\mid u\in[k],v\in [\l]\}$ be the collection of all $\C(u)$ and $\D(v)$.
\begin{claim}\label{cm:1}
Elements in $\mathscr{F}$ are pairwise disjoint.
\medskip

\noindent\emph{Proof of Claim \ref{cm:1}.}
Observe that for every $u\in[k]$ and $v\in[\l]$, $\C(u)\cap \D(v)=\varnothing$. Otherwise, we will get $kA\cap \l A\neq\varnothing$, contradicts that $A$ is $(k,\l)$-sum-free. Let $u_1,u_2\in[k]$ and $u_1< u_2$. Suppose that $\C(u_1)\cap \C(u_2)\neq\varnothing$. Then there exist $y_1\in B$ and $y_2\in A$, such that
\[
y_1+\sum_{j=k-u_2+1}^{k-u_1} b_j=y_2+(u_2-u_1)a_1.
\]
Then
\begin{align*}
s_0&=\sum_{i=1}^\l a_i-\sum_{j=1}^{k-1}b_j\\
&=y_1+\sum_{i=2}^\l a_i-y_2-(u_2-u_1-1)a_1-\sum_{j\in [1,k-u_2]\cup[k-u_1+1,k-1]}b_j.
\end{align*}
Since $y_1\in B$, thus there is $r\in [m]$ such that $y_1+rd\in A$. This implies $s_0+rd\in \l A-(k-1)A$, contradicts (\ref{eq:s0}).

Suppose $\D(v_1)\cap \D(v_2)\neq\varnothing$ for some $v_1,v_2\in[\l]$ and $v_1<v_2$. Similarly, there exist $y_1\in A$ and $y_2\in B$, such that
\[
y_1+\sum_{j=\l-v_2+1}^{\l-v_1}a_j=y_2+\sum_{i=v_1}^{v_2-1}b_i.
\]
Let $c_0=0$, and let $c_1,\dots,c_{v_2-v_1-1}\in A$ if $v_2>v_1+1$. Therefore
\begin{align*}
    s_0=y_2+\sum_{j\in[0,\l-v_2]\cup[\l-v_1+1,\l]}a_j+\sum_{t=0}^{v_2-v_1-1}c_t-y_1-\sum_{i\in[0,v_1-1]\cup[v_2,k-1]}b_i-\sum_{t=0}^{v_2-v_1-1}c_t.
\end{align*}
Observe $y_2\in B$ implies that there is $r\in[m]$, such that $y_2+rd\in A$. Hence $s_0+rd\in \l A-(k-1)A$, which contradicts (\ref{eq:s0}). 
\end{claim}

By Claim \ref{cm:1}, we obtain
\begin{equation}\label{eq:B1}
  (k+\l-2)\overline{d}(B)+2\overline{d}(A)\leq 1.  
\end{equation}
On the other hand, let $\N(t)=A\setminus B+td$ for every $t\in[m]$, and let $$\mathscr{G}=\Big\{A,A-(k-1)x+\sum_{i=1}^{\l-1}a_i,\N(t)\ \Big|\ t\in[m]\Big\}.$$
\begin{claim}\label{cm:2}
Elements in $\mathscr{G}$ are pairwise disjoint.
\medskip

\noindent\emph{Proof of Claim \ref{cm:2}.}
Suppose there are $u,v\in [m]$, $u<v$, such that $\N(u)\cap \N(v)\neq\varnothing$. Thus we have $c\in A\setminus B$ such that $c_1+(u-v)d\in A$, and this contradicts the assumption of $B$. Same conclusion holds if $A\cap \N(u)\neq\varnothing$. Observe that if $A\cap (A-(k-1)x+\sum_{i=1}^{\l-1}a_i)\neq\varnothing$, it will contradict that $A$ is $(k,\l)$-sum-free. Finally, we assume that there are $c_1,c_2\in A$, $u\in [m]$ such that
\[
c_1+ud=c_2-(k-1)x+\sum_{i=1}^{\l-1}a_i.
\]
Thus, $c_1+x+ud+(k-2)x=c_2+\sum_{i=1}^{\l-1}a_i$. Since $x+d\cdot[m]\subseteq A$, this contradicts $A$ is $(k,\l)$-sum-free.
\end{claim}

Thus, by Claim~\ref{cm:2}, we get
\[
(m+2)\overline{d}(A)-m\overline{d}(B)\leq 1.
\]
Together with (\ref{eq:B1}), this finishes the proof.
\end{proof}

The next lemma is a finite version of the Szemer\'edi Theorem~\cite{Sz}, and we will use it to find the arithmetic progression in Lemma~\ref{lem:5.1}.
\begin{lemma}[\cite{Sz}]\label{lem:Sz}
For every $\varepsilon>0$ and $m\in\NN^{>0}$, there is $L=L(\varepsilon,m)>0$ such that every set $A\subseteq\mathbb{N}^{>0}$ with $\overline{d}(A)>\varepsilon$, there exist $x\in\NN$, $d<L$, and $x+d\cdot [m]\subseteq A$.
\end{lemma}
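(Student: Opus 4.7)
The plan is to reduce the lemma to the standard finitary density version of Szemer\'edi's theorem, which asserts that for every $\varepsilon>0$ and $m\in\NN^{>0}$ there exists a threshold $N_0=N_0(\varepsilon,m)$ such that every subset $S\subseteq[N_0]$ of size at least $\varepsilon N_0$ contains an arithmetic progression of length $m$. It is exactly this quantitative statement that will yield the uniform upper bound $d<L$ demanded by the lemma; without quantifying the window in which the progression is found, one only gets an AP with unbounded common difference.

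First I would unpack the hypothesis $\overline{d}(A)>\varepsilon$ to locate a single interval of bounded length on which $A$ is still dense. By definition of upper density, there exist arbitrarily large $N$ with $|A\cap[N]|\geq\varepsilon N$. Fix such an $N$ that is much larger than $N_0:=N_0(\varepsilon/2,m)$, and partition $[N]$ into consecutive blocks of length $N_0$ (with a negligible remainder). A simple averaging argument shows that at least one block $[a+1,a+N_0]$ must satisfy $|A\cap[a+1,a+N_0]|\geq(\varepsilon/2)N_0$, for otherwise summing the block counts would give $|A\cap[N]|<(\varepsilon/2)N+N_0$, contradicting the choice of $N$ once $N$ is taken sufficiently large.

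Next I would translate back to the initial interval by setting $S:=(A-a)\cap[N_0]$, which has size at least $(\varepsilon/2)N_0$. Applying the finitary density Szemer\'edi theorem with parameters $\varepsilon/2$ and $m$ produces an $m$-term arithmetic progression inside $S$, say with first term $x'$ and common difference $d$. Shifting back, the pair $x:=x'+a$ and the same $d$ give $x+d\cdot[m]\subseteq A$. Since the entire progression sits inside an interval of length $N_0$, we automatically have $d\leq(N_0-1)/(m-1)<N_0$, so the choice $L:=N_0(\varepsilon/2,m)$ works.

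The only genuine obstacle is that this reduction leans crucially on Szemer\'edi's theorem itself, whose proof is of course highly nontrivial; but since that result is invoked as a black box (this is why the lemma is cited to \cite{Sz}), the argument above is essentially a routine pigeonhole plus translation, and no further ideas are required.
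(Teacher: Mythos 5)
Your argument is correct and is exactly the standard derivation the paper has in mind: the lemma is stated with a bare citation to Szemer\'edi's theorem and no proof, and your reduction --- block-averaging to find one interval of length $N_0(\varepsilon/2,m)$ on which $A$ has density at least $\varepsilon/2$, then applying the finitary density version there so that the common difference is automatically bounded by $L:=N_0$ --- is the routine way to get the uniform bound on $d$. The only cosmetic point is that the paper's progression is $x+d\cdot[m]=\{x+d,\dots,x+md\}$, so after finding an $m$-term AP with first term $y$ you should set $x=y-d$ (or find an $(m+1)$-term AP and drop its first element) to guarantee $x\in\NN$; this changes nothing substantive.
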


Our final lemma says that a $(k,\l)$-sum-free set $A$ with large upper density should be periodic. This structural result can be viewed as a generalization of the \L uczak--Schoen Theorem~\cite{LS97}.
\begin{lemma}\label{lem:5.2}
Let $\varepsilon>0$. Then there is $D>0$ such that the following holds. Let $A\subseteq\NN^{>0}$ be a $(k,\l)$-sum-free set, and $\overline{d}(A)>\frac{1}{k+\l}+\varepsilon$. Then $A$ is contained in a periodic $(k,\l)$-sum-free set with period $D$.
\end{lemma}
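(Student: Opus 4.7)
The plan is to combine Lemma~\ref{lem:5.1} with the Szemer\'edi-type principle Lemma~\ref{lem:Sz} to extract a bounded period $D=D(\varepsilon,k,\ell)$, and then to show the full periodic envelope of $A$ modulo $D$ is still $(k,\ell)$-sum-free. First, choose $m=m(\varepsilon,k,\ell)$ large enough that
\[
\frac{m+k+\ell-2}{(k+\ell)m+2(k+\ell-2)}<\frac{1}{k+\ell}+\varepsilon;
\]
such an $m$ exists because the left-hand side equals $\frac{1}{k+\ell}+\frac{(k+\ell-2)^{2}}{(k+\ell)\bigl((k+\ell)m+2(k+\ell-2)\bigr)}$, which decreases monotonically to $\frac{1}{k+\ell}$. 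With $\overline d(A)>\frac{1}{k+\ell}+\varepsilon$, the contrapositive of Lemma~\ref{lem:5.1} forces that for every arithmetic progression $x+d\cdot[m]\subseteq A$, the coset $x+d\ZZ$ is disjoint from $\ell A - (k-1)A$.

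Next, I would apply Lemma~\ref{lem:Sz} with threshold $\frac{1}{k+\ell}+\varepsilon$ to obtain $L=L(\varepsilon,m)$ bounding the common difference of some length-$m$ AP in $A$. The key step is then to iterate, removing the AP just found and, more importantly, applying Lemma~\ref{lem:Sz} \emph{inside} each positive-density residue class of $A$ modulo a tentatively chosen period, so as to manufacture length-$m$ APs in $A$ whose starting points cover every positive-density residue class. I take $D$ to be the least common multiple of all common differences appearing in this process; careful quantitative bookkeeping keeps $D$ a function of only $\varepsilon,k,\ell$.

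To verify that $\widetilde A:=\bigcup_{a\in A}(a+D\ZZ)\cap\NN^{>0}$ is $(k,\ell)$-sum-free, suppose toward contradiction that $\sum_{i=1}^{k}\tilde a_{i}=\sum_{j=1}^{\ell}\tilde b_{j}$ for some $\tilde a_i,\tilde b_j\in\widetilde A$. Choose representatives $a_i,b_j\in A$ with $a_i\equiv\tilde a_i$ and $b_j\equiv\tilde b_j\pmod D$, taking them large enough to ensure the positivity required below, and set $s:=\sum a_i-\sum b_j$. Then $D\mid s$, and $s\neq 0$ because $A$ is $(k,\ell)$-sum-free in $\ZZ$. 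Rearranging gives
\[
a_{1}-s=\sum_{j=1}^{\ell}b_{j}-\sum_{i=2}^{k}a_{i}\in\ell A-(k-1)A,\qquad a_{1}-s\equiv a_{1}\pmod{D}.
\]
By the previous step there is a length-$m$ AP $x+d\cdot[m]\subseteq A$ with $x\equiv a_{1}\pmod{d}$ and $d\mid D$, so $a_{1}-s\in x+d\ZZ$, contradicting the disjointness from the first paragraph.

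The hardest step will be this last matching: guaranteeing that every residue of $A$ modulo $D$ is reached by the starting point of a length-$m$ AP in $A$ with compatible common difference. This is precisely what drives the iteration in the second paragraph, and the main technical subtlety is the quantitative control needed to keep $D$ bounded only in terms of $\varepsilon,k,\ell$ rather than blowing up as the iteration descends through residue classes of varying density. Residue classes of $A$ modulo $D$ with vanishing upper density require a separate treatment, most naturally absorbed by one final bounded enlargement of $D$.
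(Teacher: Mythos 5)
Your first paragraph and the choice of $m$ are fine, and the algebraic reduction in the verification step (producing $a_1-s\in\ell A-(k-1)A$ with $a_1-s\equiv a_1\pmod D$) is essentially the right way to use Lemma~\ref{lem:5.1}. The gap is exactly where you locate it, and it is not repairable along the route you sketch: you need a length-$m$ progression $x+d\cdot[m]\subseteq A$ with $d\mid D$ lying in the \emph{same} residue class mod $d$ as an arbitrary prescribed element of $A$. But $\widetilde A$ contains the full coset $r+D\ZZ$ as soon as $A$ has a single element in it, so a violating sum can involve a class in which $A$ is finite or has zero density; no amount of Szemer\'edi inside residue classes produces a progression there, and such classes cannot be discarded because the lemma must contain \emph{all} of $A$ in the periodic set. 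Moreover the iteration is circular ($D$ is the lcm of differences found while working modulo a ``tentatively chosen'' $D$), and for classes of small positive density $\delta$ the bound $L(\delta,m)$ from Lemma~\ref{lem:Sz} depends on $\delta$, which is not controlled by $\varepsilon,k,\ell$; so the claimed boundedness of $D$ is unsupported.

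The paper's proof avoids prescribing the residue class of the progression altogether. Fix $D=L(\varepsilon,m)!$ and suppose $C=\widetilde A$ is not $(k,\ell)$-sum-free, witnessed by $\sum_{i=1}^{\ell}a_i=\sum_{j=1}^{k}b_j$ with $a_i,b_j\in C$. Define the ``sum-free part'' $P=C\setminus(\ell C-(k-1)C)$ and show, by exhibiting $k+\ell$ pairwise disjoint translates of $P$ built from the $a_i$ and $b_j$ (Claim~\ref{cm:3}), that $\overline{d}(P)\leq\frac{1}{k+\ell}$. Hence $\overline{d}(A\setminus P)\geq\varepsilon$, and Lemma~\ref{lem:Sz} gives a progression $x+d\cdot[m]\subseteq A\setminus P$ with $d<L$. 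The point is that its endpoint $x+dm$ lies in $\ell C-(k-1)C$ \emph{by definition of $P$}, and since every coset of $C$ mod $D$ contains an element of $A$, that representation lifts to $s\in\ell A-(k-1)A$ with $s\equiv x+dm\equiv x\pmod d$ (using $d\mid D$). So the compatibility between the progression and the element of $\ell A-(k-1)A$ comes for free, and Lemma~\ref{lem:5.1} gives the contradiction. In short: instead of hunting for a progression in the coset dictated by the violating sum, locate the progression first (in the dense set $A\setminus P$) and let its own failure to be in $P$ manufacture the element $s$ in its coset. You would need this idea, or something equivalent, to close your argument.
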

\begin{proof}
We pick $m\in\NN^{>0}$ such that
\begin{equation}\label{eq:density}
\frac{m+k+\l-2}{(k+\l)m+2(k+\l-2)}<\frac{1}{k+\l}+\varepsilon.
\end{equation}
Let $L=L(\varepsilon,m)$ be as in Lemma~\ref{lem:Sz}.
Let $D=L!$. Suppose the lemma fails.  Let $C\subseteq\NN^{>0}$ be a periodic set with period $D$, consists of all positive integers in every coset $a+D\cdot\ZZ$ for $a\in A$. Thus $C$ is not $(k,\l)$-sum-free. This means, there are $a_1,\dots,a_\l$ and $b_1,\dots,b_k$ in $C$ such that $\sum_{i=1}^\l a_i=\sum_{j=1}^k b_j$. Let $P$ be the ``$(k,\l)$-sum-free'' part of $C$. That is, 
\[
P=C\setminus \big(\l C-(k-1)C\big).
\]
Set $a_0=b_0=0$. For every $u\in[k]$ and $v\in[\l]$, let
\[
\MM(u)=P+\sum_{j=0}^{k-u}b_j+(u-1)a_1,\qquad \N(v)=P+\sum_{i=0}^{\l-v}a_i+(v-1)b_1.
\]
Let $\mathscr{F}$ be the collection of all $\MM(u)$ and $\N(v)$. 
\begin{claim}\label{cm:3}
Elements in $\mathscr{F}$ are pairwise disjoint.
\medskip

\noindent\emph{Proof of Claim \ref{cm:3}.}
Observe that for every $u\in [k]$ and $v\in[\l]$, $\MM(u)\cap \N(v)=\varnothing$. Otherwise there are $p_1,p_2\in P$, such that
\[
p_1=p_2+\sum_{i=0}^{\l-v}a_1+(v-1)b-\sum_{j=0}^{k-u}b_j-(u-1)a_1\in \l C-(k-1)C,
\]
contradicts the assumption of $P$. Now, suppose $u_1,u_2\in [k]$, $u_1<u_2$, such that $\MM(u_1)\cap \MM(u_2)\neq\varnothing$. The case that $\N(v_1)\cap \N(v_2)\neq\varnothing$ can be proved in the same way. Thus, there exist $p_1,p_2\in P$, such that
\[
p_1+\sum_{j=k-u_2+1}^{k-u_1}b_j=p_2+(u_2-u_1)a_1.
\]
This implies
\[
0=\sum_{j=1}^k b_j-\sum_{i=1}^\l a_i=p_2+(u_2-u_1-1)a_1+\sum_{j\in[0,k-u_2]\cup [k-u_1+1,k]}b_j-\sum_{i=2}^\l a_i-p_1,
\]
hence $P\cap(\l C-(k-1)C)\neq\varnothing$, contradiction.
\end{claim}

By Claim \ref{cm:3}, we obtain that $\overline{d}(P)\leq\frac{1}{k+\l}$. This means, $\overline{d}(A\setminus P)\geq\varepsilon$. By Lemma~\ref{lem:Sz}, $A\setminus P$ contains a progression $x+d\cdot [m]$, and $d<L$. By the way we construct $P$, there are $s_1,\dots,s_\l$ and $t_1,\dots,t_{k-1}$ in $C$ such that 
\[
x+dm=\sum_{i=1}^\l s_i-\sum_{j=1}^{k-1}t_j.
\]
Hence there are $e_1,\dots,e_\l$ and $f_1,\dots,f_{k-1}$ in $A$, such that for every $i\in[\l]$ and $j\in[k-1]$, we have that $e_i\in s_i+D\cdot \ZZ$, and $f_j\in t_j+D\cdot \ZZ$. Let $s=\sum_{i=1}^\l e_i-\sum_{j=1}^{k-1}f_j$, thus $s\in \l A-(k-1)A$, and $s\in x+D\cdot \ZZ$. Since $d\mid D$, we have $s\in x+d\cdot \ZZ$. By Lemma~\ref{lem:5.1}, we have that 
\[
\overline{d}(A)\leq\frac{m+k+\l-2}{(k+\l)m+2(k+\l-2)},
\]
and this contradicts (\ref{eq:density}).
\end{proof}

Now we can prove the main result of this section.
\begin{proof}[Proof of Theorem~\ref{thm:periodic}]
Let $A/N!:=\{a\mid aN!\in A\}$. Thus $\widetilde{d}(A)>0$ implies that $A/N!$ contains a multiple of every natural number. In particular, $A/N!$ is not contained in a periodic $(k,\l)$-sum-free set. By Lemma~\ref{lem:5.2}, $\overline{d}(A/N!)\leq\frac{1}{k+\l}$.
Observe that $\widetilde{d}(A)=\limsup_{N\to\infty}\overline{d}(A/N!)$, thus $\widetilde{d}(A)\leq\frac{1}{k+\l}$.
\end{proof}

\section{Upper bound constructions}

Recall a \emph{F\o lner sequence} in $(\NN,\cdot)$ is any sequence $\Phi:m\mapsto\Phi_m$ of finite non-empty subsets of $\NN$, such that for every $a\in\NN$,
\[
\lim_{m\to\infty}\frac{|\Phi_m\triangle (a\cdot \Phi_m)|}{|\Phi_m|}=0.
\]

F\o lner sequence has been used as some good constructions in many additive combinatorics problems, see \cite{A,B} for example.  In this section, we will show that the sets in F\o lner sequence will never have large $(k,\l)$-sum-free subsets. In fact, we will prove the following theorem.
\begin{theorem}\label{thm:folner}
Let $\Phi=\{\Phi_m\}$ be a F\o lner sequence in $(\NN,\cdot)$. Suppose there are infinitely many $m$ such that $\Phi_m$ has a $(k,\l)$-sum-free set of size at least $\delta|\Phi_m|$ for some positive real number $\delta\leq 1$. Then there exists a $(k,\l)$-sum-free set $A\subseteq\NN$ such that $\widetilde{d}(A)\geq\delta$.
\end{theorem}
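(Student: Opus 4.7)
The natural approach, given the nonstandard preliminaries in Section~2.2, is to package the F\o lner assumption into a measure-preserving action of $(\NN,\cdot)$ on a Loeb probability space, and then extract a suitable $A\subseteq\NN$ by averaging over that space. Choose a non-principal ultrafilter $\mathscr{U}\in\beta\NN\setminus\NN$ that contains the infinite set of indices $m$ for which $\Phi_m$ has a $(k,\l)$-sum-free subset $S_m$ with $|S_m|\geq\delta|\Phi_m|$. Form the internal ultraproducts $\Phi=\prod_{m\to\mathscr{U}}\Phi_m$ and $S=\prod_{m\to\mathscr{U}}S_m$, and endow $\Phi$ with the Loeb probability measure $\mu_L$ built from the uniform counting measures on $\Phi_m$. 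Then $\mu_L(S)\geq\delta$, and $S$ inherits an internal $(k,\l)$-sum-free property: any relation $\sum a_i=\sum b_j$ with $a_i,b_j\in S$ would, at the level of representatives, yield such a relation in $S_m$ for $\mathscr{U}$-many $m$, contradicting the hypothesis on $S_m$.

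The key use of the F\o lner hypothesis is that $|a\Phi_m\triangle\Phi_m|/|\Phi_m|\to 0$ for every $a\in\NN$, which translates into the statement that, for each standard $a$, the internal multiplication map $M_a:x\mapsto ax$ is $\mu_L$-measure-preserving on $\Phi$ (outside a Loeb-null set where $ax\notin\Phi$). Consequently $\mu_L\big(\{x\in\Phi:a\cdot x\in S\}\big)=\mu_L(S)\geq\delta$ for every $a\in\NN$.

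For each $x\in\Phi$, I define the standard set
\[
A_x=\{n\in\NN:n\cdot x\in S\}\subseteq\NN.
\]
The set $A_x$ is $(k,\l)$-sum-free: a relation $a_1+\cdots+a_k=b_1+\cdots+b_\l$ with $a_i,b_j\in A_x$ multiplies through by $x$ to give $a_1x+\cdots+a_kx=b_1x+\cdots+b_\l x$ as an identity among nonstandard integers, contradicting the internal $(k,\l)$-sum-freeness of $S$. By linearity and the measure-preservation above, for every $N$ and $n$,
\[
\mathbb{E}_{x\in\Phi}\frac{|A_x\cap N!\cdot[n]|}{n}=\frac{1}{n}\sum_{j=1}^n\mu_L\big(\{x:jN!\cdot x\in S\}\big)\geq\delta.
\]
Applying reverse Fatou to the $[0,1]$-bounded sequence indexed by $n$ gives $\mathbb{E}_x\limsup_n\frac{|A_x\cap N!\cdot[n]|}{n}\geq\delta$, and a second application of reverse Fatou as $N\to\infty$ yields $\mathbb{E}_x\widetilde d(A_x)\geq\delta$. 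Hence there exists $x^*\in\Phi$ with $\widetilde d(A_{x^*})\geq\delta$, and $A:=A_{x^*}$ is the required set.

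The main technical point is verifying the measure-preservation claim: one rewrites $\{x\in\Phi_m:ax\in S_m\}$ as the $M_a$-preimage of $S_m\cap a\Phi_m$ and uses $|a\Phi_m\cap\Phi_m|=(1-o(1))|\Phi_m|$ to conclude that this preimage has cardinality $|S_m\cap a\Phi_m|=|S_m|-o(|\Phi_m|)$, which produces Loeb measure exactly $\mu_L(S)$ in the ultralimit. This bookkeeping is routine, but it is precisely where the F\o lner hypothesis is crucial. The remaining ingredients --- measurability of $|A_x\cap N!\cdot[n]|$ as a function of $x$, legitimacy of the reverse Fatou applications, and the transfer of $(k,\l)$-sum-freeness from the internal set $S$ to the standard set $A_x$ --- are straightforward in the ultraproduct framework.
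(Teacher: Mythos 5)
Your proposal is correct and follows essentially the same route as the paper: form the internal ultraproduct of the $(k,\l)$-sum-free subsets, use the F\o lner property to make $x\mapsto ax$ Loeb-measure-preserving, define $A_x=\{n:nx\in S\}$, and average $\widetilde d(A_x)$ over $x$ via Fatou to find one $x$ with $\widetilde d(A_x)\geq\delta$. The only cosmetic differences are that you are slightly more explicit about using the reverse (limsup) form of Fatou's lemma and about the bookkeeping behind measure preservation.
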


Theorem~\ref{thm:one} (iv) follows easily from Theorem~\ref{thm:folner} and Theorem~\ref{thm:periodic}.
\begin{proof}[Proof of Theorem~\ref{thm:folner}]
By passing to a subsequence, we may assume for every $\Phi_m\in\Phi$, there is a $(k,\l)$-sum-free set $\phi_m\subseteq \Phi_m$, such that $|\phi_m|/|\Phi_m|\geq\delta$.
Let $\beta\NN$ be the collection of ultrafilters, and let $\mathscr{U}\in\beta\NN\setminus\NN$ be a non-principal ultrafilter. Let $^{*}\ZZ=\prod_{m\to\mathscr{U}}\ZZ$ be the ultrapower of $\ZZ$. Let $\Sigma$ be the Loeb $\sigma$-algebra on $^{*}\ZZ$. Let $\mu_L$ be the Loeb measure induced by $\mu_m$, where $\mu_m(X)=|X\cap \Phi_m|/|\Phi_m|$ for every $X\subseteq\ZZ$. Let $\phi=\prod_{m\to\mathscr{U}}\phi_m$ be the internal set. Then by \L o\'s's Theorem, $\phi$ is $(k,\l)$-sum-free, and
\[
\mu_L(\phi)=\mathrm{st}\left(\lim_{m\to\mathscr{U}}\mu_m(\phi_m)\right)\geq\delta.
\]
\begin{claim}\label{cm:4}
For every $a\in\NN$, the map $x\mapsto ax$ is $\Sigma$-measurable and $\mu_L$-preserving.
\medskip

\noindent\emph{Proof of Claim \ref{cm:4}.}
Note that $x\mapsto ax$ sends internal sets to internal sets, thus it is $\Sigma$-measurable. For every $X\subseteq\ZZ$, since 
\[
\mu_m(X)-\mu_m(a\cdot X)=\frac{|X\cap \Phi_m|-|(a\cdot X)\cap \Phi_m|}{|\Phi_m|}\leq \frac{|(a\cdot \Phi_m)\triangle \Phi_m|}{|\Phi_m|}\to 0
\]
as $m\to \infty$, it preserves the Loeb measure $\mu_L$.
\end{claim}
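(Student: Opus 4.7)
The statement has two parts for $T_a : x \mapsto ax$ acting on ${}^{*}\ZZ$: that it is $\Sigma$-measurable, and that $\mu_L(T_a^{-1}(E)) = \mu_L(E)$ for every $E \in \Sigma$. My plan is to reduce both assertions to the class of internal sets, since $\Sigma$ is (the completion of) the $\sigma$-algebra generated by them.

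For measurability, I would use that $T_a$ is itself an internal map: it is the ultraproduct $\prod_{m\to\mathscr{U}}(\text{multiplication by }a)$ of the corresponding standard maps on $\ZZ$. Consequently, the preimage of any internal set $Y = \prod_{m\to\mathscr{U}} Y_m$ is again internal, namely $\prod_{m\to\mathscr{U}}\{y \in \ZZ : ay \in Y_m\} \in \Sigma$. A routine monotone class argument then promotes measurability of preimages from the generating collection of internal sets to all of $\Sigma$.

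For measure preservation, I would reduce to showing $\mu_L(T_a^{-1}(X)) = \mu_L(X)$ for an internal set $X = \prod_{m\to\mathscr{U}} X_m$, and argue at the finite levels. The injectivity of $y \mapsto ay$ on $\ZZ$ yields a bijection between $T_a^{-1}(X_m)\cap \Phi_m = \{y \in \Phi_m : ay \in X_m\}$ and $(a \cdot \Phi_m) \cap X_m$, so
\[
\big||T_a^{-1}(X_m) \cap \Phi_m| - |X_m \cap \Phi_m|\big| = \big||(a \cdot \Phi_m) \cap X_m| - |\Phi_m \cap X_m|\big| \leq |(a \cdot \Phi_m) \,\triangle\, \Phi_m|.
\]
Dividing through by $|\Phi_m|$ and invoking the F\o lner condition $|(a \cdot \Phi_m) \triangle \Phi_m|/|\Phi_m| \to 0$, the quantities $\mu_m(T_a^{-1}(X_m))$ and $\mu_m(X_m)$ agree in the ultralimit, so their standard parts coincide, giving $\mu_L(T_a^{-1}(X)) = \mu_L(X)$ on internal $X$. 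The Carath\'eodory-style extension used in the very construction of $\mu_L$ then propagates this identity to all of $\Sigma$.

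The only place where care is needed is the bijection step: since $T_a$ is injective but not surjective on $\ZZ$, one cannot compare $T_a^{-1}(X_m) \cap \Phi_m$ to $X_m \cap \Phi_m$ directly, but must pass through $(a \cdot \Phi_m) \cap X_m$, which is precisely where the F\o lner hypothesis becomes applicable. Beyond this bookkeeping I anticipate no substantive obstacle, as the remainder is standard Loeb-measure machinery.
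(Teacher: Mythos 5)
Your proof is correct and follows essentially the same route as the paper: injectivity of multiplication by $a$ together with the F\o lner condition bounds the discrepancy at each finite level by $|(a\cdot\Phi_m)\triangle\Phi_m|/|\Phi_m|\to0$, and the identity then extends from internal sets to all of $\Sigma$. The only difference is cosmetic: you compare $\mu_m$ of the preimage $T_a^{-1}(X_m)$ while the paper compares $\mu_m$ of the forward image $a\cdot X$; your formulation is the one actually invoked later in the argument, so the extra care is welcome but it is not a different proof.
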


Now we are able to apply the probabilistic argument used in the proof of Proposition~\ref{prop:T1} on the set $\phi$. For every $x\in {^{*}\ZZ}\setminus\{0\}$, let $A_x:=\{a\in\NN\mid ax\in \phi\}$. Thus $A_x$ is $(k,\l)$-sum-free. By Claim \ref{cm:4}, $\widetilde{d}(A_x)$ is $\Sigma$-measurable on $x$. Suppose $x$ is chosen uniformly at random with respect to the measure $\mu_L$. By Fatou's Lemma,
\begin{align*}
    \mathbb{E}(\widetilde{d}(A_x))&\geq\limsup_{N\to\infty}\limsup_{n\to\infty} \mathbb{E}\left(\frac{|A_x\cap(N!\cdot[n])|}{n}\right)\\
    &=\limsup_{N\to\infty}\limsup_{n\to\infty}\frac{1}{n}\sum_{j=1}^n\mathbb{P}(jN!x\in \phi).
\end{align*}
By Claim \ref{cm:4}, we have
\[
\mathbb{E}(\widetilde{d}(A_x))\geq \limsup_{N\to\infty}\limsup_{n\to\infty}\frac{1}{n}\sum_{j=1}^n\mathbb{P}(x\in \phi)=\mu_L(\phi)\geq\delta.
\]
Thus by Pigeonhole Principle, there exists a set $A_x\subseteq\NN$ for some $x\in{^*\ZZ}\setminus\{0\}$ such that $\widetilde{d}(A_x)\geq\delta$.
\end{proof}

\section{Restricted $(k,\l)$-sum-free sets}

In this section, we prove Theorem~\ref{thm:1.2}. Since restricted $(k,\l)$-sum-free can be expressed by first order formula, once we prove the conclusion in Theorem~\ref{thm:periodic} also works for restricted $(k,\l)$-sum-free sets, Theorem \ref{thm:1.2} follows by using the same proof in Theorem \ref{thm:folner}. More precisely, in the proof of Theorem \ref{thm:folner}, if $A_x=\{a\in\NN\mid ax\in \phi\}$ is not restricted $(k,\l)$-sum-free for some $x\in{^*\ZZ}\setminus\{0\}$, since the map $a\mapsto ax$ is injective, we also have that $\phi$ is not restricted $(k,\l)$-sum-free.

We first consider the analogue of Lemma~\ref{lem:5.1} for restricted $(k,\l)$-sum-free sets. The similar argument also works here, with a different and more involved constructions of sets $\C(u)$, $\D(v)$, and $\N(t)$, and a more careful analysis. These new constructions will lead a slightly different structure for the large infinite restricted $(k,\l)$-sum-free sets in Lemma~\ref{lem:7.2}, compared to the non-restricted setting.
\begin{lemma}\label{lem:81}
Let $k,\l$ be positive integers, and $\l<k\leq2\l+1$. Suppose $A\subseteq \NN^{>0}$ be a restricted $(k,\l)$-sum-free set. Define $W\subseteq\NN^{>0}$, satisfies that for every $w\in W$, there are $\l$ distinct elements $y_1,\dots,y_\l\in A$, and $k-1$ distinct elements $z_1,\dots,z_{k-1}\in A$, such that $w\neq z_i$ for $i\in[k-1]$, and $w=\sum_{j=1}^\l y_j-\sum_{i=1}^{k-1}z_i$. Let $x,s,d,m$ be integers, such that $s\in W$, $m>k+\l$, $x+d\cdot [m]\subseteq A$, and $s$ is in the coset $x+d\cdot\ZZ$. Then
\[
\overline{d}(A)\leq\frac{m-2}{(k+\l)(m-k-\l)+2(k+\l-2)}.
\]
\end{lemma}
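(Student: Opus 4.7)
The plan is to imitate the proof of Lemma~\ref{lem:5.1}, with the translates refined so that every sum identity extracted from a collision uses pairwise distinct elements; this is what allows us to invoke restricted $(k,\l)$-sum-freeness rather than just $(k,\l)$-sum-freeness. The hypothesis $k\leq 2\l+1$ (equivalently $k-1\leq 2\l$) is exactly what guarantees enough distinct labels among $\{a_1,\ldots,a_\l,b_1,\ldots,b_{k-1}\}$ for the bookkeeping.

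First I note $A\cap W=\varnothing$: if $w\in A\cap W$ is witnessed by distinct $y_1,\ldots,y_\l,z_1,\ldots,z_{k-1}\in A$ with $w\notin\{z_i\}$, then $\{w,z_1,\ldots,z_{k-1}\}$ is a set of $k$ distinct elements of $A$ summing to the $\l$ distinct $y_i$, contradicting restricted $(k,\l)$-sum-freeness. Hence $(x+d\cdot[m])\cap W=\varnothing$, and as in Lemma~\ref{lem:5.1} I locate $s_0\in x+d\cdot\ZZ$ with $s_0\in W$ and $(s_0+d\cdot[m])\cap W=\varnothing$. Fix a witness $s_0=\sum_{i=1}^{\l}a_i-\sum_{j=1}^{k-1}b_j$ with all $a_i,b_j$ distinct and $s_0\notin\{b_j\}$, and set $B=\{b\in A:(b+d\cdot[m])\cap A\neq\varnothing\}$.

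The heart of the argument is to construct $k+\l-2$ translates of $B$ and $2$ translates of $A$ which are pairwise disjoint, yielding the inequality $(k+\l-2)\overline{d}(B)+2\overline{d}(A)\leq 1$. Each translate is by a sum of elements of $\{a_i\}\cup\{b_j\}$ with \emph{no repetition}: for example one takes $\C(u)=B+\sum_{j=1}^{k-u}b_j+\sum_{i=1}^{u-1}a_i$ for $u\in[\l+1]$ and a symmetric family $\D(v)$, with $\C(k)$ and $\D(1)$ playing the role of the two $A$-translates. The bound $k-1\leq 2\l$ is used here: it guarantees that we never require more than $\l$ distinct summands from either the $a$'s or the $b$'s side, so these translates are well-defined with distinct labels. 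A collision between two such translates, after pullback, yields an identity in $W$ shifted by $rd$ for some $r\in[m]$ (namely, an $r$ for which $y_1+rd\in A$ and $y_1+rd$ avoids the $O(k+\l)$ labels appearing in the identity; such $r$ exist since $m>k+\l$), contradicting the choice of $s_0$.

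For the second inequality, set $\N(t)=(A\setminus B)+td$ for $t$ in a sub-interval $I\subseteq[m]$ of length $m-k-\l$. Pairwise disjointness of the $\N(t)$'s and disjointness from $A$ follow from the definition of $B$. Disjointness from $A-(k-1)x+\sum_{i=1}^{\l-1}a_i$ is the restricted analogue of the key step in Claim~\ref{cm:2}: a collision $c_1+td=c_2-(k-1)x+\sum a_i$ is rewritten as $c_1+\sum_{r=1}^{k-1}(x+s_rd)=c_2+\sum_{i=1}^{\l-1}a_i$ by picking distinct $s_r\in[m]$ with $\sum s_r=t$ and the $x+s_rd$ disjoint from $\{c_1,c_2,a_1,\ldots,a_{\l-1}\}$; choosing $I$ so that every $t\in I$ admits such a decomposition costs us $k+\l$ translates, and both sides then become sums of distinct elements of $A$, contradicting restricted $(k,\l)$-sum-freeness. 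This yields $(m-k-\l+2)\overline{d}(A)-(m-k-\l)\overline{d}(B)\leq 1$. Eliminating $\overline{d}(B)$ between the two inequalities (multiplying the first by $m-k-\l$ and the second by $k+\l-2$, then adding) gives the stated bound. The main obstacle is the combinatorial construction of the $k+\l-2$ translates of $B$ with distinct-summand labels so that every pairwise collision produces a genuine element of $W$; the condition $k\leq 2\l+1$ is precisely what makes this apportionment feasible, and one does not expect an analogous lemma for larger $k$ relative to $\l$.
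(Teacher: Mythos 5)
Your overall architecture is the same as the paper's: the same two families of translates, the same two density inequalities $(k+\l-2)\overline{d}(B)+2\overline{d}(A)\leq 1$ and $(m-k-\l+2)\overline{d}(A)-(m-k-\l)\overline{d}(B)\leq 1$, and the same elimination. But the execution has a genuine gap centred on one missing device. The paper introduces an auxiliary set $\mathcal{E}$ of $k-1$ fresh distinct elements $e_1,\dots,e_{k-1}\in A$, disjoint from $\{a_i\}$, $\{b_j\}$, $\{s_0\}$ and $s_0+d\cdot[m]$, and pads every collision identity with $+\sum_t e_t-\sum_t e_t$ so that it has \emph{exactly} $\l$ distinct summands on one side and exactly $k-1$ (resp.\ $k$) on the other. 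Without such padding, a collision between $\C(u_1)$ and $\C(u_2)$ with $u_2-u_1$ large yields an identity with too few summands on one side, which certifies neither that $s_0+rd\in W$ (the definition of $W$ demands exactly $\l$ and exactly $k-1$ distinct elements) nor a violation of restricted sum-freeness. Relatedly, your insistence that each translation amount be a repetition-free sum, taking $\C(u)$ only for $u\in[\l+1]$ plus a ``symmetric family,'' cannot produce the required $k+\l-2$ translates of $B$ when $k$ is close to $2\l+1$: you get at most $2\l+2$ sets while $k+\l$ can be as large as $3\l+1$. The paper instead takes $\C(u)$ for \emph{all} $u\in[k-1]$, translating by $\sum_{j\leq k-u}b_j+\sum_{i=0}^{u-1}c_i$ where $c_i=a_i$ for $i\in[\l]$ and $c_i=a_{i-\l}$ for $i\in[\l+1,k-1]$; each $a_i$ may appear twice in the shift, and $k\leq 2\l+1$ is what makes this reuse (together with the $\mathcal{E}$-padding in the four-case rearrangement) work --- not the claim that ``we never require more than $\l$ summands from either side.'' A further slip: $y_1\in B$ supplies only \emph{one} $r$ with $y_1+rd\in A$, so you cannot choose $r$ to avoid the labels; the paper fixes this by passing to $B'$, deleting the finitely many elements of $B$ whose witnessing shifts land on $\{a_i\}\cup\mathcal{E}$ (a density-preserving operation), and by working with $m'=m-k-\l$ from the start.

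The second inequality has a quantitative problem as well. Writing $(k-1)x+td=\sum_{r=1}^{k-1}(x+s_rd)$ with distinct $s_r\in[m]$ forces $t\geq 1+2+\cdots+(k-1)=k(k-1)/2$, so discarding only $k+\l$ values of $t$ does not suffice; the surviving interval has length $m-\Theta(k^2)$ and yields a constant weaker than the one asserted in the lemma (harmless for the asymptotic application, but not for the statement as written). The paper avoids the decomposition entirely by translating $A''=A'\setminus(x+d\cdot[m'])$ by $\sum_{i=1}^{\l-1}a_i-x-\sum_{j=1}^{k-2}e_j$, so that a collision with $\N(t)$ rearranges directly to $(x+td)+c_1+e_1+\cdots+e_{k-2}=c_2+\sum_{i=1}^{\l-1}a_i$, a sum of $k$ distinct elements of $A$ equal to a sum of $\l$ distinct elements of $A$, at no extra cost in $t$.
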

\begin{proof}
$s\in W$ implies that $s\notin A$ since $A$ is restricted $(k,\l)$-sum-free. We only consider the case when $s<x$.  Since $A\cap W=\varnothing$, there is $s_0\in x+d\cdot\ZZ$ such that $s_0\in W$ and $(s_0+d\cdot[m])\cap W=\varnothing$. Thus there are $\l$ distinct elements $a_1,\dots,a_l\in A$, and $k-1$ distinct elements $b_1,\dots,b_{k-1}\in A$, $s_0\neq b_j$ for every $j\in[k-1]$, and $s_0=\sum_{i=1}^\l a_i-\sum_{j=1}^{k-1}b_j$. Let $\mathcal{E}$ consists of $k-1$ distinct elements $e_1,\dots,e_{k-1}\in A$, and all of them are disjoint from $\{a_i\}_{i=1}^\l$, $\{b_j\}_{j=1}^{k-1}$, $\{s_0\}$ and $s_0+d\cdot[m]$. Let
\begin{equation}\label{eq:A'}
A'=A\setminus\Big(\bigcup_{i=1}^\l\{a_i\}\cup\bigcup_{j=1}^{k-1}\{b_j\}\cup \mathcal{E}\cup\{s_0\}\cup (s_0+d\cdot [m])\Big).
\end{equation}
Observe that 
\begin{equation}\label{eq:b_j}
    (s_0+d\cdot[m])\cap\{b_j\}_{j=1}^{k-1}=\varnothing,
\end{equation}since $b_j\in W$ for every $j\in[k-1]$. Let $m'=m-k-\l$, we claim that
\begin{equation}\label{eq:a_i}
     (s_0+d\cdot[m'])\cap\{a_i\}_{i=1}^{\l}=\varnothing.
\end{equation}
Otherwise, suppose there is $r\in[m']$ such that $s_0+rd=a_t$ for some $t\in[\l]$. Then
\[
x'+\sum_{j=1}^{k-1}b_j=x'+rd+\sum_{j=1,j\neq t}^\l a_j.
\]
By taking $x'\in x+d\cdot[0,m-r]$, then both $x'$ and $x'+rd$ are in $A$. Since $m-r\geq k+\l$, there is $\alpha\in[0,m-r]$ such that $x+\alpha d\notin\{b_j\}_{j=1}^{k-1}$, and $x+(\alpha+r)d\notin \{a_i\}_{i=1}^{\l}$. This contradicts that $A$ is restricted $(k,\l)$-sum-free.

Let $B=\{b\in A'\mid (b+d\cdot[m'])\cap A\neq\varnothing\},$
and let
\[
B'=B\setminus\left(\Big(\bigcup_{i=1}^\l\{a_i\}\cup\mathcal{E}\Big)-d\cdot[m']\right).
\]
Let $c_0=0$, $c_i=a_i$ when $i\in[\l]$, and $c_j=a_{j-\l}$ when $j\in[\l+1,k-1]$. For $u\in[k-1]$ and $v\in [2,\l]$, let
\[
\C(u)=B'+\sum_{j=1}^{k-u}b_j+\sum_{i=0}^{u-1}c_i,\quad \D(v)=B'+\sum_{i=0}^{\l-v}a_i+\sum_{j=0}^{v-1}b_j,
\]
and $\C(k)=A'+\sum_{i=0}^{k-1}c_i$, $\D(1)=A'+\sum_{i=1}^{\l-1}a_i$. Let $\mathscr{F}$ consists of all $\C(u)$ and $\D(v)$, then Claim \ref{cm:1} still holds. In fact, suppose there are $u_1,u_2\in [k]$, $u_1<u_2$ such that $\C(u_1)\cap \C(u_2)\neq\varnothing$ (the case when $\D(v_1)\cap \D(v_2)\neq\varnothing$ is simpler). Then there exist $y_1\in B'$, $y_2\in A'$ such that
\[
y_1+\sum_{j=k-u_2+1}^{k-u_1}b_j=y_2+\sum_{i=u_1}^{u_2-1}c_i.
\]
Let $e_0=0$, and $e_1,\dots,e_{u_2-u_1-1}\in \mathcal{E}$ if $u_2>u_1+1$. If $u_2\leq\l$, we have
\[
s_0=y_1+\sum_{i\in[0,u_1-1]\cup[u_2,\l]}a_i+\sum_{t=0}^{u_2-u_1-1}e_t-y_2-\sum_{j\in[0,k-u_2]\cup[k-u_1+1,k-1]}b_j-\sum_{t=0}^{u_2-u_1-1}e_t.
\]
If $u_1\geq \l+1$, we get
\[
s_0=y_1+\sum_{i\in[0,u_1-1-\l]\cup[u_2-\l,\l]}a_i+\sum_{t=0}^{u_2-u_1-1}e_t-y_2-\sum_{j\in[0,k-u_2]\cup[k-u_1+1,k-1]}b_j-\sum_{t=0}^{u_2-u_1-1}e_t.
\]
If $u_1\leq\l$, $u_2\geq\l+1$, and $u_2-u_1+1\leq \l$,
\[
s_0=y_1+\sum_{i\in[u_2-\l,u_1-1]}a_i+\sum_{t=0}^{u_2-u_1-1}e_t-y_2-\sum_{j\in[0,k-u_2]\cup[k-u_1+1,k-1]}b_j-\sum_{t=0}^{u_2-u_1-1}e_t.
\]
If $u_1\leq\l$, $u_2\geq\l+1$, and $u_2-u_1\geq \l$. Let $e_0=0$, $e_1,\dots,e_{\l-1}\in \mathcal{E}$ if $\l>1$. Thus
\[
s_0=y_1+\sum_{t=0}^{\l-1}e_t-y_2-\sum_{j\in[0,k-u_2]\cup[k-u_1+1,k-1]}b_j-\sum_{t=0}^{\l-1}e_t-\sum_{i=u_1}^{u_2-1-\l}a_i.
\]
Note that $k\leq 2\l+1$ implies $u_2-1-\l\leq\l$. 

In any case, since $y_1\in B$, by (\ref{eq:A'}), (\ref{eq:b_j}), and (\ref{eq:a_i}), there is $r\in[m']$ such that $s_0+rd\in W$, which contradicts the assumption of $s_0$. Therefore,
\begin{equation}\label{eq:7.1.1}
  (k+\l-2)\overline{d}(B)+2\overline{d}(A)\leq 1, 
\end{equation}
since $\overline{d}(A')=\overline{d}(A)$ and $\overline{d}(B')=\overline{d}(B)$.

We also modify the construction of $\N(t)$ in a similar way. For every $t\in[m']$, let $\N(t)=A'\setminus B+td$. Let $e_0=0$, and $e_1,\dots,e_{k-2}\in \mathcal{E}$ if $k\geq3$. Let $A''=A'\setminus(x+d\cdot[m'])$. Define
\[
\mathscr{G}=\Big\{\N(t),A',A''+\sum_{i=1}^{\l-1}a_i-x-\sum_{j=0}^{k-2}e_j\ \Big|\  t\in[m']\Big\}
\]
Then by using the similar argument, it is easy to see that Claim \ref{cm:2} still holds. We omit the details here. We have
\[
(m-k-\l+2)\overline{d}(A)-(m-k-\l)\overline{d}(B)\leq 1,
\]
since $\overline{d}(A'')=\overline{d}(A)$. Together with (\ref{eq:7.1.1}), finishes the proof.
\end{proof}

Next, we consider the analogue of Lemma~\ref{lem:5.2} for restricted $(k,\l)$-sum-free sets. The structure here is slightly different from the $(k,\l)$-sum-free sets.
\begin{lemma}\label{lem:7.2}
Let $\varepsilon>0$ and let $k,\l$ be positive integers with $\l<k\leq2\l+1$. Then there is $D>0$ such that the following holds. Let $A\subseteq\NN^{>0}$ be a restricted $(k,\l)$-sum-free set, and $\overline{d}(A)>\frac{1}{k+\l}+\varepsilon$. Then after removing at most $D(2k+\l)$ elements from $A$, it is contained in a periodic restricted $(k,\l)$-sum-free set with period $D$.
\end{lemma}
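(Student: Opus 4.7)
The plan is to adapt the proof of Lemma~\ref{lem:5.2} with Lemma~\ref{lem:81} playing the role of Lemma~\ref{lem:5.1}. The only genuinely new step is a preliminary thinning of $A$: we want every residue class modulo the eventual period $D$ that still meets $A$ to contain at least $2k+\l$ elements of $A$, since this is the slack needed to lift a restricted-sum witness from the periodic cover back to $A$. This thinning is what costs $D(2k+\l)$ elements.

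Concretely, choose $m>k+\l$ so that
\[
\frac{m-2}{(k+\l)(m-k-\l)+2(k+\l-2)}<\frac{1}{k+\l}+\varepsilon,
\]
take $L=L(\varepsilon/2,m)$ from Lemma~\ref{lem:Sz}, and set $D=L!$. Delete from $A$ every element whose residue class mod $D$ meets $A$ in fewer than $2k+\l$ points; call the result $A^\star$ and let $C$ be the union of the surviving residue classes, so $A^\star\subseteq C$, $\overline{d}(A^\star)=\overline{d}(A)>\frac{1}{k+\l}+\varepsilon$, and each class of $C$ contains $\geq 2k+\l$ elements of $A$. Suppose for contradiction that $C$ is not restricted $(k,\l)$-sum-free, define $W_C\subseteq C$ exactly as the set $W$ in Lemma~\ref{lem:81} but relative to $C$, and set $P=C\setminus W_C$. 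A disjointness argument along the lines of Claim~\ref{cm:3} but using the translates $\C(u),\D(v)$ and auxiliary reservoir $\mathcal{E}$ introduced in Lemma~\ref{lem:81} (which avoid the forbidden repetitions required by the restricted-sum condition) gives $\overline{d}(P)\leq\frac{1}{k+\l}$, so $\overline{d}(A^\star\setminus P)>\varepsilon$. Lemma~\ref{lem:Sz} then yields an AP $x+d\cdot[m]\subseteq A^\star\setminus P$ with $d<L$, hence $d\mid D$; since $x\in W_C$ we can write $x=\sum_{i=1}^\l y_i-\sum_{j=1}^{k-1}z_j$ with distinct $y_i,z_j\in C$ and $x\neq z_j$. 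Using the $\geq 2k+\l$ reserve in each relevant residue class of $C$, lift these $k+\l-1$ elements to pairwise distinct $y_i',z_j'\in A$ with the same residues and arrange the lift so that $s:=\sum y_i'-\sum z_j'\notin\{z_1',\ldots,z_{k-1}'\}$; otherwise $s=z_{j_0}'$ would display a restricted $k$-sum in $A$ as a restricted $\l$-sum, contradicting the hypothesis on $A$. Then $s\equiv x\pmod d$ and $s\in W$, and Lemma~\ref{lem:81} applied to the AP $x+d\cdot[m]\subseteq A$ gives $\overline{d}(A)\leq\frac{m-2}{(k+\l)(m-k-\l)+2(k+\l-2)}<\frac{1}{k+\l}+\varepsilon$, a contradiction.

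The main technical hurdle is the final lifting: producing the distinct integer witnesses $y_i',z_j'\in A$ with $s\neq z_j'$. This is where the $2k+\l$ threshold per surviving residue class is used, and where the hypothesis $k\leq 2\l+1$ enters via Lemma~\ref{lem:81}. The adaptation of Claim~\ref{cm:3} to the restricted setting is otherwise a direct transcription of the case analysis already carried out in the proof of Lemma~\ref{lem:81}, with $\mathcal{E}$ supplying distinct auxiliary elements from $C$ in place of the repetitions $(u-1)a_1$ and $(v-1)b_1$.
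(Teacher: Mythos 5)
Your proposal follows the paper's proof essentially verbatim: thin $A$ by discarding the residue classes mod $D=L!$ that meet $A$ in fewer than $2k+\l$ points (this is exactly the $D(2k+\l)$ loss), pass to the periodic cover $C$, bound $\overline{d}(P)$ for the restricted sum-free part $P$ via the disjoint translates $\MM(u),\N(v)$ built with the auxiliary set $\mathcal{E}$ as in Lemma~\ref{lem:81}, locate an AP in $A^\star\setminus P$ by Lemma~\ref{lem:Sz}, and lift the witness from $C$ back to $A$ using the per-class reserve before invoking Lemma~\ref{lem:81}. One small quibble: if $s=z_{j_0}'$ the equation $\sum y_i'=s+\sum z_j'$ repeats $z_{j_0}'$ and hence is \emph{not} a restricted sum, so it would not contradict the hypothesis on $A$; the correct (and sufficient) fix, which you also state and which is what the paper does, is to perturb one $y_i'$ by a multiple of $D$ within its residue class so as to move $s$ off the finite set $\{z_1',\dots,z_{k-1}'\}$.
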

\begin{proof}
We pick $m>k+\l$ such that
\begin{equation}\label{eq:7.2.1}
\frac{m-2}{(k+\l)(m-k-\l)+2(k+\l-2)}<\frac{1}{k+\l}+\varepsilon.
\end{equation}
Let $L=L(\varepsilon,m)$ be as in Lemma~\ref{lem:Sz}, and let $D=L!$. We consider the partition of $\NN$ into cosets:
\[
\NN=\bigcup_{x\in[D]}x+D\cdot\NN.
\]
For every $x\in[D]$, let $\NN_x=x+D\cdot\NN$, and $A_x=A\cap \NN_x$. Let $A'$ be a subset of $A$, obtained by removing $A_x$ from $A$ when $|A_x|<2k+\l$. Hence $\overline{d}(A')=\overline{d}(A)$. Next, we are going to show that $A'$ is contained in a periodic restricted $(k,\l)$-sum-free set with period $D$. Suppose this is not the case. Let
\[
C=\Big(\bigcup_{a\in A'} a+D\cdot\ZZ\Big)\cap\NN^{>0}.
\]
Thus $C$ is not restricted $(k,\l)$-sum-free. This means, there are $\l$ distinct elements $a_1,\dots,a_\l\in C$ and $k$ distinct elements $b_1,\dots,b_k\in C$, such that $\sum_{i=1}^\l a_i=\sum_{j=1}^k b_j$. Let $P$ be the ``$(k,\l)$-sum-free'' part of $C$, that for every $w\in P$, every $k-1$ distinct elements $y_1,\dots,y_{k-1}\in C\setminus\{w\}$, and every $\l$ distinct elements $z_1,\dots,z_\l\in C$, we have $w+\sum_{i=1}^{k-1}y_i\neq\sum_{j=1}^\l z_j$. Let $e_0=0$, and let $\mathcal{E}$ consists of $k-1$ distinct elements $e_1,\dots,e_{k-1}\in C$, such that $\mathcal{E}$ is disjoint from $\{a_i\}_{i=1}^\l$ and $\{b_j\}_{j=1}^k$. 
\[
P'=P\setminus\Big(\bigcup_{i=1}^\l\{a_i\}\cup\bigcup_{j=1}^k\{b_j\}\cup\mathcal{E}\Big).
\]

Set $a_0=b_0=c_0=0$. Let $c_t=a_t$ when $t\in[\l]$, and $c_t=a_{t-\l}$ when $t\in[\l+1,k-1]$. For every $u\in[k]$ and $v\in[\l]$, let
\[
\MM(u)=P'+\sum_{j=0}^{k-u}b_j+\sum_{t=0}^{u-1}c_t,\qquad \N(v)=P'+\sum_{i=0}^{\l-v}a_i+\sum_{t=0}^{v-1}b_t.
\]
Let $\mathscr{F}$ be the collection of all $\MM(u)$ and $\N(v)$. Then elements in $\mathscr{F}$ are pairwise disjoint. Otherwise, suppose there are $u_1,u_2\in[k]$, $u_1<u_2$ such that $\MM(u_1)\cap \MM(u_2)\neq\varnothing$ (the case when $\N(v_1)\cap \N(v_2)\neq\varnothing$ is simpler). Thus, there are $y_1,y_2\in P'$, such that
\[
y_1+\sum_{k-u_2+1}^{k-u_1}b_j=y_2+\sum_{t=u_1}^{u_2-1}c_t.
\]
Let $e_1,\dots,e_{u_2-u_1-1}\in\mathcal{E}$ if $u_2>u_1+1$. If $u_2\leq\l$, we have
\begin{align*}
  0&=\sum_{i=1}^\l a_i-\sum_{j=1}^k b_j\\
  &=y_1+\sum_{i\in[0,u_1-1]\cup[u_2,\l]}a_i+\sum_{t=0}^{u_2-u_1-1}e_t-y_2-\sum_{j\in[0,k-u_2]\cup[k-u_1+1,k]}b_j-\sum_{t=0}^{u_2-u_1-1}e_t. 
\end{align*}
If $u_1\geq\l+1$, we have
\[
0=y_1+\sum_{i\in[0,u_1-1-\l]\cup[u_2-\l,\l]}a_i+\sum_{t=0}^{u_2-u_1-1}e_t-y_2-\sum_{j\in[0,k-u_2]\cup[k-u_1+1,k]}b_j-\sum_{t=0}^{u_2-u_1-1}e_t. 
\]
If $u_1\leq\l$, $u_2\geq\l+1$, and $\l\geq u_2-u_1$, we get
\[
0=y_1+\sum_{i=u_2-\l}^{u_1-1}a_i+\sum_{t=0}^{u_2-u_1-1}e_t-y_2-\sum_{j\in[0,k-u_2]\cup[k-u_1+1,k]}b_j-\sum_{t=0}^{u_2-u_1-1}e_t. 
\]
If $u_1\leq\l$, $u_2\geq\l+1$, and $\l<u_2-u_1$. Let $e_1,\dots,e_{\l-1}\in\mathcal{E}$ if $\l>1$, we get
\[
0=y_1+\sum_{t=0}^{\l-1}e_t-y_2-\sum_{j\in[0,k-u_2]\cup[k-u_1+1,k]}b_j-\sum_{i=u_1}^{u_2-1-\l} a_i-\sum_{t=0}^{\l-1}e_t.
\]

In any case, we get a contradiction with the assumption of $P'$ and the fact that $y_2\in P'$.
Therefore, $$\overline{d}(P)\leq\frac{1}{k+\l},$$ since $\overline{d}(P')=\overline{d}(P)$. This means, $\overline{d}(A'\setminus P)\geq\varepsilon$. By Lemma~\ref{lem:Sz}, $A'\setminus P$ contains a progression $x+d\cdot [m]$, and $d<L$. By the way we construct $P$, there are $\l$ distinct elements $s_1,\dots,s_\l\in C$ and $k-1$ distinct elements $t_1,\dots,t_{k-1}$ in $C\setminus\{x+m\}$ such that 
\[
x+m=\sum_{i=1}^\l s_i-\sum_{j=1}^{k-1}t_j.
\]
By the way we construct $A'$, for every $r\in[D]$, if $|A'\cap\NN_r|>0$, then $|A'\cap\NN_r|\geq 2k+\l$. Thus, there are $\l$ distinct elements $\alpha_1,\dots,\alpha_\l\in A'$ and $k-1$ distinct elements $\beta_1,\dots,\beta_{k-1}\in A'$, such that for every $i\in[\l]$ and $j\in[k-1]$, we have that $\alpha_i\in s_i+D\cdot \ZZ$, and $\beta_j\in t_j+D\cdot \ZZ$. Let $s=\sum_{i=1}^\l \alpha_i-\sum_{j=1}^{k-1}\beta_j$. Note that $|A'\cap\NN_r|\geq 2k+\l$ also implies that there is $r'\in[\l]$, and $M\subseteq \NN^{>0}$, $|M|\geq k$, such that $$\alpha_{r'}+D\cdot M\subseteq A',\quad (\alpha_{r'}+D\cdot M)\cap\bigcup_{i=1}^{\l}\{\alpha_i\}=\varnothing.$$
Thus if $s\cap\{\beta_j\}_{j=1}^{k-1}\neq\varnothing$, then by changing $\alpha_{r'}$ by $\alpha_{r'}+nD$ for some $n\in M$, one can make $s+nD\cap\{\beta_j\}_{j=1}^{k-1}=\varnothing$. Since $d\mid D$, we have $s\in x+d\cdot \ZZ$. By Lemma~\ref{lem:5.1}, we have that 
\[
\overline{d}(A)\leq\frac{m-2}{(k+\l)(m-k-\l)+2(k+\l-2)},
\]
and this contradicts (\ref{eq:7.2.1}).
\end{proof}

Let $A$ be a restricted $(k,\l)$-sum-free set, and let $A'$ be a subset of $A$ obtained by removing finitely many elements from $A$. Observe that, if $A'$ is contained in a periodic restricted $(k,\l)$-sum-free set, then $A$ cannot contain a multiple of every natural number. Thus, using the same proof in Theorem~\ref{thm:periodic}, we conclude that $\widetilde{d}(A)\leq\frac{1}{k+\l}$ if $A$ is restricted $(k,\l)$-sum-free.

\section{Concluding Remarks}
In this paper, we first study $\M_{(k,\l)}(N)$. In particular, we prove that Conjecture~\ref{conj:kl} is true for infinitely many $(k,\l)$. While solving Conjecture~\ref{conj:kl}  might  not  be  a  realistic  target  at  the  moment, the following conjecture for the case when $k-\l\geq2$ might  be   feasible. This is because in this case, Lemma~\ref{lem:T} implies that we have two different asymmetric maximal $(k,\ell)$-sum-free open sets in $\TT$, and the technique developed in this paper might be useful. 
\begin{conjecture}
Let $k,\l$ be positive integers and $k\geq\l+2$. Then there is a function $\omega(N)\to\infty$ as $N\to\infty$, such that $$  \M_{(k,\l)}(N)\geq \frac{N}{k+\l}+\omega(N).$$
\end{conjecture}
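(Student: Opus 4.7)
The plan is to extend the Fourier-analytic argument behind Theorem~\ref{thm:one} (ii), (iii) to the full range $k - \l \geq 2$. Since $k - \l \geq 2$, Lemma~\ref{lem:T} supplies $k - \l$ asymmetric maximal $(k,\l)$-sum-free open sets $\Om_t \subset \TT$, and the balanced functions $f_t = \mathbbm{1}_{\Om_t} - 1/(k+\l)$ are the correct test objects; this asymmetry is essential and unavailable when $k = \l + 1$ (cf.\ Lemma~\ref{lem: k=l+1}). Mirroring Section~5, I would form two weighted combinations $F = \sum_{t} \lambda_t f_t$ and $G = \sum_t \lambda'_t f_t$ whose Fourier expansions are a pure cosine and a pure sine series, respectively, with the weights chosen via the Vandermonde-type identities of Lemma~\ref{lem:matrix} so that the surviving Fourier coefficients $a(n)$, $b(n)$ are each supported on a single arithmetic progression modulo $2(k+\l)$.

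The next, and crucial, step is to design a M\"obius-type sieve $\sum_{m \in \MM} w(m) m^{-1} F(mx)$ that annihilates all Fourier modes $a(n)/n$ with $n$ having a ``small'' prime factor, leaving only a bounded main term plus a contribution supported on the set $\N$ of $P$-rough integers from Section~3. In the two proven subcases, the weights $\eta(m)$ and $\mu(m)\pi(m)$ of Sections~5.1 and~5.2 do exactly this because $\pi$ and (a parity-corrected version of) $\psi$ are multiplicative; for general $(k,\l)$ one must pick $w$ so that, for every $n$ outside a short ``exceptional'' list, $\sum_{m \in \MM,\, m \mid n} w(m)\, a(n/m) = 0$. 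Once such a sieve is found for both $F$ and $G$, applying Corollary~\ref{cor: key} to the resulting trigonometric sums yields an $L^1$ lower bound of order $\log N/\log\log N$, which transfers to $\M_{(k,\l)}(A) \geq N/(k+\l) + c \log N/\log\log N$ by the pigeonhole step at the end of Section~5.1.

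The principal obstacle I anticipate is precisely the construction of the sieve weight $w(m)$ for general $(k,\l)$. One natural attempt is to decompose the periodic function $a(n) \bmod 2(k+\l)$ as a $\CC$-linear combination of Dirichlet characters (of various moduli dividing $2(k+\l)$, so as to cover residue classes with $\gcd(n,2(k+\l)) > 1$), apply the sieve character-by-character where it reduces to the already-understood multiplicative situation, and then recombine. The difficulty is that non-principal characters can cancel destructively in the main term unless one chooses $F$ and $G$ \emph{adapted} to a single character from the outset---e.g.\ by setting $\lambda_t = \chi(2t-1)$ directly---and then verifying that the resulting test function is nontrivial for at least one $\chi$. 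A secondary obstacle is the case $k - \l$ odd: the matrix $D$ of Section~5 is then not square, and one must enlarge the family of balanced functions (for instance by including reflections of the $\Om_t$) in order to construct $G$, which requires a more involved determinant calculation than Lemma~\ref{lem:matrix}.
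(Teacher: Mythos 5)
This statement is one of the paper's open conjectures (posed in Section~9), not a theorem the paper proves; the authors explicitly present it as a problem that ``might be feasible'' with the techniques of Section~5, and your proposal is a research plan rather than a proof. You have correctly reconstructed the intended strategy --- use the $k-\l$ asymmetric sets $\Om_t$ from Lemma~\ref{lem:T}, form cosine/sine combinations $F$ and $G$ via Lemma~\ref{lem:matrix}, sieve away the smooth Fourier modes, and invoke Corollary~\ref{cor: key} --- and you have correctly located the obstruction. But the obstruction is not overcome, so there is a genuine gap and the argument does not constitute a proof.

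Concretely, the missing step is the one you flag yourself: the construction of the weight $w(m)$ with $\sum_{m\in\MM,\,m\mid n} w(m)\,a(n/m)=0$ for all $n$ outside a bounded exceptional set. The paper only achieves this when the surviving coefficient function is multiplicative ($\pi$, $\Phi$, $\Psi$ in Sections~5.1--5.2) or can be repaired to be multiplicative by a parity twist ($\psi$ versus $\eta$ when $k=5\l$); this is exactly why Theorem~\ref{thm:one} establishes the $\log N/\log\log N$ gain only for $k=5\l$ and for the special families $k=(u+v)\l/(u-v)$ of part (iii), and why the general case remains conjectural. Your proposed fix --- decomposing $a(n)$ into Dirichlet characters modulo divisors of $2(k+\l)$ and sieving character by character --- is a reasonable direction, but as you note the recombination can destroy the main term, and choosing $\lambda_t=\chi(2t-1)$ need not land on an admissible solution of the linear system $D\bm{\lambda}^T=(0,\dots,0,1)^T$ that Section~5 requires. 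Your secondary concern is also real: the paper's construction of $G$ assumes $k-\l$ even, so the case $k-\l$ odd (e.g.\ $(k,\l)=(4,1)$) needs a genuinely new device, not just a larger determinant computation. Until both of these are resolved, the statement remains open, as the paper intends.
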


We also study $\widehat{\M}_{(k,\l)}(N)$ in Theorem~\ref{thm:1.2}. As we can see in the proofs in Section~8, when $k> 2\l+1$, the current strategy failed to obtain disjoint sets $\mathcal{C}$ and $\mathcal{D}$ in the proof of Lemma~\ref{lem:81}, as well as disjoint sets $\MM$ and $\N$ in the proof of Lemma~\ref{lem:7.2}. Although we think it is very likely that the conclusion in Theorem~\ref{thm:1.2} holds for every $k$ and $\l$, the case $k>2\l+1$ may require some new ingredients. 

\begin{conjecture}
For every positive integers $k,\l$ with $k>2\l+1$,
$$\widehat{\M}_{(k,\l)}(N)=\Big(\frac{1}{k+\l}+o(1)\Big)N.$$
\end{conjecture}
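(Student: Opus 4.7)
The plan is to extend the structural program of Sections~6--8 to cover $k>2\l+1$, focusing on the specific point where it breaks down. Given Theorem~\ref{thm:folner} and the proof of Theorem~\ref{thm:1.2} at the end of Section~8, the conjecture reduces to establishing the density bound $\widetilde{d}(A)\leq 1/(k+\l)$ for every restricted $(k,\l)$-sum-free set $A\subseteq\NN^{>0}$. In view of the reductions in that section, it suffices to prove the analogue of Lemma~\ref{lem:7.2}: a restricted $(k,\l)$-sum-free set of upper density exceeding $1/(k+\l)+\varepsilon$ is, up to finitely many elements, contained in a periodic restricted $(k,\l)$-sum-free set. The Szemer\'edi-type progression step and the deduction from Lemma~\ref{lem:81} carry over verbatim for any $k,\l$, so the heart of the problem is obtaining the disjointness of the sets $\MM(u)$ and $\N(v)$ in the proof of Lemma~\ref{lem:7.2} when $k>2\l+1$.

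The first step is to diagnose the obstruction precisely. In the case $u_1\leq \l$, $u_2\geq \l+1$, $u_2-u_1>\l$, the reduction of $\MM(u_1)\cap\MM(u_2)\neq\varnothing$ to a restricted $(k,\l)$-sum representation of $0$ requires subtracting $a_i$'s with indices in $[u_1,u_2-1-\l]$, and this list may run past $\l$, exhausting the distinct $a_i$'s at our disposal. The auxiliary reservoir $\mathcal{E}$ of $k-1$ fresh elements is also insufficient once $k-1<u_2-u_1-1$ in the extreme cases. My proposal is to enlarge $\mathcal{E}$ to a reservoir $\mathcal{E}^*\subset C$ of size $\Theta(k^2)$ of fresh elements (available for free since $C$ is periodic with many elements in each coset), and additionally to prepare a second independent zero-sum relation $\sum_{i=1}^{\l}a_i' = \sum_{j=1}^{k}b_j'$ with all $a_i',b_j'\in C$ chosen disjoint from all previous distinguished elements. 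With two independent relations at hand, the residue $\sum_{i=u_1}^{u_2-1-\l}a_i$ that caused the index overflow can be rewritten by substituting it through the second relation and re-padding with $\mathcal{E}^*$, so that every case in the analysis yields a restricted $(k,\l)$-sum representation of $0$ using genuinely distinct elements.

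The second step is to re-run the disjointness argument with these upgraded inputs. One would partition the $(u_1,u_2)$ index space into regimes and, in each regime, specify a fixed combinatorial substitution scheme drawing from $\mathcal{E}^*$ and from the second relation. The verification that each scheme produces pairwise disjoint $\MM(u)$'s and $\N(v)$'s becomes a bookkeeping exercise in which one checks that every element appearing on one side of the witness identity has an avatar available on the other side without repetition. Provided this design works, Claim~\ref{cm:3} (restricted version) yields $\overline{d}(P)\leq 1/(k+\l)$ exactly as before, and Lemma~\ref{lem:81} delivers the contradiction.

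The hard part will be constructing the substitution schemes uniformly in $k$ and $\l$. The natural design-theoretic requirement is that for every $u_1<u_2$ in $[k]$, the "deficit" $\sum_{i=u_1}^{u_2-1-\l}a_i$ can be compensated by a fixed multilinear combination of the reserve elements and the primed relation, with no two regimes competing for the same element. When $k/\l$ is large the number of regimes grows while the number of available fresh elements per relation is still only $k+\l$, so one may need not just two but $\Theta(k/\l)$ independent relations, each contributing its own reservoir; guaranteeing such a collection exists inside $C$ is straightforward by periodicity, but ensuring global distinctness across all witness identities is where a genuinely new combinatorial idea is required. If this design cannot be made to close, the back-up plan is to bypass the structural lemma entirely and attack the conjecture through an arithmetic regularity lemma in the spirit of \cite{EGM}, tracking the restricted condition via an inclusion--exclusion over repeated-element patterns; however, I expect that route to be harder than the design-based fix outlined above.
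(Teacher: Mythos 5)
This statement is an open conjecture that the paper does not prove; the authors explicitly state in the concluding remarks that the case $k>2\l+1$ ``may require some new ingredients,'' precisely because the disjointness arguments in Lemmas~\ref{lem:81} and~\ref{lem:7.2} break down there. Your proposal is a research plan rather than a proof, and it contains a genuine gap at exactly the point the authors flag: you never construct the substitution schemes that would turn the witness identities arising from $\MM(u_1)\cap\MM(u_2)\neq\varnothing$ into valid \emph{restricted} $(k,\l)$-sum representations of $0$ with all elements distinct. The difficulty is not merely a shortage of reservoir elements that a larger $\mathcal{E}^*$ or extra relations could patch. When $k>2\l+1$ the very definition of the padding sequence $c_j=a_{j-\l}$ for $j\in[\l+1,k-1]$ no longer makes sense (the index $j-\l$ exceeds $\l$), so the translates $\MM(u)$ themselves must be redefined, and with them the entire case analysis. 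Introducing a second independent relation $\sum a_i'=\sum b_j'$ changes which translate of $P'$ you land in, so it is not clear that the resulting $k+\l$ sets remain translates of a single set whose pairwise disjointness forces $\overline{d}(P)\leq 1/(k+\l)$; you acknowledge this yourself when you write that ``a genuinely new combinatorial idea is required.'' An admitted missing idea at the crux of the argument means the proof does not close.

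Two secondary points. First, your reduction understates the scope of the repair: Lemma~\ref{lem:81} is itself only proved for $k\leq 2\l+1$ (its proof uses the same $c_j=a_{j-\l}$ device and explicitly invokes $u_2-1-\l\leq\l$), so its ``deduction'' does not carry over verbatim --- you would need to prove a new analogue of that lemma as well, facing the same obstruction a second time in the setting where the progression $x+d\cdot[m]$ and the sets $B'$, $\mathcal{E}$ must all be kept disjoint. Second, the reduction of the conjecture to $\widetilde{d}(A)\leq 1/(k+\l)$ for restricted $(k,\l)$-sum-free $A$, via Theorem~\ref{thm:folner} and the argument at the end of Section~8, is correct and matches the paper's intended strategy; the diagnosis of where the index overflow occurs is also accurate. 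But identifying the obstacle is not the same as overcoming it.
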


A $(k,\l)$-sum-free set is a set forbidding a linear equation $\sum_{i=1}^\l x_i=\sum_{j=1}^k y_j$. Another interesting direction is to consider the analogue problem on sets forbidding a system of linear equations. One of the most interesting problems along this line might be forbidding the projective cubes. Given a multiset $S=\{s_1,\dots,s_d\}$, a \emph{$d$-dimensional projective cube} generated by $S$ is 
\[
\square^d (S):=\Big\{\sum_{i\in I}s_i\ \Big|\ \varnothing\neq I\subseteq [d]\Big\}.
\]
A set is \emph{$\square^d$-free} if it does not contain any $d$-dimensional projective cubes as its subsets. 
Extremal properties of projective cubes have a vast literature, see e.g. \cite{AF88,EF90,GR98,LW19}. The problem on forbidding $d$-dimensional projective cubes can be viewed as a generalization of sum-free sets in another direction, since a sum-free set is also a $\square^2$-free set. Thus, the following problem is worthwhile to pursue. 
\begin{question}
Let $d\geq3$ be an integer. Define
\[
\M_{\square^d}(N):=\inf_{\substack{A\subseteq\NN^{>0}\\|A|=N}}\max_{\substack{B\subseteq A\\B\text{ \emph{is} }\square^d\text{\emph{-free}}}}|B|.
\]
Determine $\M_{\square^d}(N)$.
\end{question}

\section*{Acknowledgements}

The authors would like to thank Bela Bajnok and Noah Kravitz for pointing out some missing references. They are also deeply indebted to the referee for carefully reading the manuscript and pointing out several mistakes made in the earlier versions.

\bibliographystyle{amsplain}
\bibliography{ref}

\end{document}